\documentclass[a4paper, 12 pt, reqno, oneside]{amsart}
\usepackage{amsfonts, amssymb, amsmath, eucal, amsthm, stmaryrd, verbatim}
\usepackage{graphicx}
\usepackage{bbm}
\usepackage{color}  % \usepackage[usenames]{color} 
\usepackage[hidelinks]{hyperref}

%\newcommand{\dg}[1]{{\color{red} #1}}
%\newcommand{\rh}[1]{{\color{blue} #1}}

% Formatting
\usepackage[marginparwidth=2.8cm, vmargin=3.2cm, hmargin=3.4cm]{geometry}

%Environment Definitions:

\newtheorem{lemma}{Lemma}
\newtheorem{theorem}[lemma]{Theorem}
\newtheorem*{theorem*}{Theorem}
\newtheorem{proposition}[lemma]{Proposition}
\newtheorem{corollary}[lemma]{Corollary}

\theoremstyle{definition}
\newtheorem{definition}[lemma]{Definition}

\newtheorem{example}[lemma]{Example}

\newtheorem{remark}[lemma]{Remark}
\newtheorem{question}[lemma]{Question}

\numberwithin{lemma}{section}

\newcommand{\Gr}{\mathrm{Gr}}
\newcommand{\persmod}{\mathbf{PersMod}}
\newcommand{\grmod}{\mathbf{GrMod}}

\newcommand{\R}{\mathbb{R}}
\newcommand{\Z}{\mathbb{Z}}
\newcommand{\N}{\mathbb{N}}
\newcommand{\calR}{\mathcal{R}}
\newcommand{\calCh}{\mathcal{\check{C}}}
\DeclareMathOperator{\diam}{diam}

\newcommand{\kk}{\mathbbm{k}}
\DeclareMathOperator{\rank}{\mathrm{rank}}

\newcommand{\Top}{\mathbf{Top}}
\newcommand{\Vect}{\mathbf{Vect}}
\newcommand{\GrVect}{\mathbf{GrVect}}

\newcommand{\id}{\operatorname{id}}
\newcommand{\ind}{\operatorname{ind}}
\DeclareMathOperator{\tor}{Tor}
\DeclareMathOperator{\colim}{colim}
\DeclareMathOperator{\MH}{MH}
\DeclareMathOperator{\MC}{MC}
\DeclareMathOperator{\BMH}{BMH}
\DeclareMathOperator{\BMC}{BMC}

\usepackage{tikz}
\usepackage{tkz-berge}
\usepackage{ctable}
\newcommand{\otoprule}{\midrule[\heavyrulewidth]}

\title{Persistent Magnitude}
\author{Dejan Govc}
\address{Institute of Mathematics, University of Aberdeen, Aberdeen,
United Kingdom AB24 3UE}
\email{dejan.govc@abdn.ac.uk}
\thanks{DG was supported by EPSRC grant EP/P025072/1}

\author{Richard Hepworth}
\address{Institute of Mathematics, University of Aberdeen, Aberdeen,
United Kingdom AB24 3UE}
\email{r.hepworth@abdn.ac.uk}

\subjclass[2010]{Primary 55N99; Secondary 55N35, 51F99, 11A25}

\begin{document}

\begin{abstract}
In this paper we introduce the \emph{persistent magnitude}, a new numerical invariant of (sufficiently nice) graded persistence modules.  It is a weighted and signed count of the bars of the persistence module, in which a bar of the form $[a,b)$ in degree $d$ is counted with weight $(e^{-a}-e^{-b})$ and sign $(-1)^d$.  Persistent magnitude has good formal properties, such as additivity with respect to exact sequences and compatibility with tensor products, and has interpretations in terms of both the associated graded functor, and the Laplace transform.

Our definition is inspired by Otter's notion of blurred magnitude homology: we show that the magnitude of a finite metric space is precisely the persistent magnitude of its blurred magnitude homology.  Turning this result on its head, we obtain a strategy for turning existing persistent homology theories into new numerical invariants by applying the persistent magnitude.  We explore this strategy in detail in the case of persistent homology of Morse functions, and in the case of Rips homology.
\end{abstract}

\maketitle

\section{Introduction}

Magnitude is a numerical invariant of metric spaces arising from category
theory and with nontrivial geometric content.
In this paper we apply the theory of magnitude and its categorifications
to the study of persistence modules and persistent homology theories.

\subsection{Background}

Persistent homology, a fundamental notion in topological data analysis (or TDA),
is a tool for measuring the shape of data sets
and other objects.  The general idea is to take a data set and produce
an increasing sequence of topological spaces $X_s$, 
one for each value of a parameter $s$, where $X_s$ 
describes the shape of the data set `at scale $s$'.
Taking the homology of the $X_s$ produces the homology groups
$H_\ast(X_s)$ together with structure maps
$H_\ast(X_s)\to H_\ast(X_{s'})$ whenever $s\leqslant s'$.
This structure is called the \emph{persistent homology} of the
data set, and it is an example of an
algebraic structure called a graded \emph{persistence module}.
Any (sufficiently nice) persistence module has a \emph{barcode decomposition}
describing its isomorphism class in terms of a collection of intervals
called \emph{bars}.  Each bar is interpreted as a feature of the data
set: the start point of the interval is the scale at which
the feature first comes into being, and the end point is the scale at
which the feature evaporates. In one common interpretation, longer 
bars are interpreted as significant features, while shorter bars are
interpreted as noise.

\emph{Magnitude} is a numerical invariant of metric spaces introduced by
Leinster~\cite{LeinsterMetricSpace} (see also the survey \cite{survey}),
as an instance of a general category theoretical construction.
Despite its abstract origins, magnitude is a rich geometric invariant:
Meckes~\cite{MeckesMagnitudeDimensions} showed that magnitude can detect
the Minkowski dimension of compact subsets of Euclidean space,
Barcel\'o-Carbery~\cite{BarceloCarbery} showed that it can detect the
volume of compact subsets of Euclidean space,
and Gimperlein-Goffeng~\cite{GimperleinGoffeng} showed that it
can in addition detect surface area and the second intrinsic volume $V_2$
of appropriate subsets of odd-dimensional Euclidean space.

Magnitude of metric spaces has a categorification,
called \emph{magnitude homology}, which was introduced by 
Hepworth-Willerton~\cite{richard} and Leinster-Shulman~\cite{shulman}.
The magnitude homology of a metric space is a bigraded abelian group,
whose graded Euler characteristic recovers
the magnitude of the metric space, at least when the space is finite.  
Thus the relationship between
magnitude and magnitude homology is analogous to the relationship
between Euler characteristic and singular homology.
More recently, Otter~\cite{Otter} has introduced a \emph{blurred}
or persistent version of magnitude homology, which relates magnitude
homology to the Rips complex and, importantly, to ordinary homology.

\subsection{Results}

Blurred magnitude homology assigns to each metric space
$X$ a graded persistence module $\BMH_\ast(X)$.  
When $X$ is finite, 
we show that there is an attractive relationship between
the barcode decomposition of $\BMH_\ast(X)$ and the magnitude $|X|$ of $X$:
\[
	|X| 
	= 
	\sum_{k=0}^{\infty}
	\sum_{i=1}^{m_k}
	(-1)^k (e^{-a_{k,i}} - e^{-b_{k,i}})
\]
where $\BMH_\ast(X)$ has bars
$[a_{k,1},b_{k,1}),\ldots,[a_{k,m_k},b_{k,m_k})$
in degree $k\geq 0$.

Observe that the right hand side of the equation above
makes sense for any graded persistence module,
so long as it is subject to a finiteness condition such
as being finitely presented.
We turn this observation into a definition:
The \emph{persistent magnitude}
or simply \emph{magnitude} $|M_\ast|$ of a
finitely presented graded persistence module
$M_\ast$ is defined by
\[
	|M_\ast| 
	= 
	\sum_{k=0}^{\infty}
	\sum_{i=1}^{m_k}
	(-1)^k (e^{-a_{k,i}} - e^{-b_{k,i}})
\]
where $M_\ast$ has bars $[a_{k,1},b_{k,1}),\ldots,[a_{k,m_k},b_{k,m_k})$
in degree $k\geq 0$.
Note that a bar $[a,b)$ makes a contribution of
$\pm(e^{-a} - e^{-b})$ to the magnitude, so that longer bars
make a greater contribution, in line with one of the general
philosophies of persistent homology.

Persistent magnitude has good formal properties:
we show that 
it is additive with respect to exact sequences,
and that the magnitude of a tensor product of persistence
modules is the product of the magnitudes of the factors,
so long as the tensor product is understood in an appropriate
derived sense.

Now suppose that we have a persistent homology theory
defined for some class of mathematical objects,
for example the Rips homology of metric spaces.  By applying persistent
magnitude to the persistent homology, we obtain a new numerical
invariant of the mathematical objects in question.
Our first example of this process is the case of the sublevel set 
persistent homology of Morse functions, where the resulting
magnitude invariant is a (signed and weighted) count of the
critical points of the original function.  

Our most detailed example of persistent magnitude in action
is the \emph{Rips magnitude}.  This is the numerical invariant
of finite metric spaces obtained by taking the persistent
magnitude of the Rips homology, and is given 
by the weighted simplex-count
\[
	|X|_\mathrm{Rips} 
	= 
	\sum_{\emptyset\neq A\subseteq X}(-1)^{\#A-1}e^{-\diam(A)}.
\]
We compute the Rips magnitude of cycle graphs with their path,
Euclidean and geodesic metrics.  In each case they are determined by 
a number-theoretical
formula reminiscent of the sum of divisors function.

In the original setting of magnitude,
defining the magnitude of infinite
metric spaces is not straightforward: 
the simplest method is to
take the supremum of the magnitude of all finite subspaces of the
given infinite metric space, but there are alternatives, and currently
the theory only works well in the case of \emph{positive definite}
spaces.
We conclude the paper by investigating the question of whether
Rips magnitude
can be extended to infinite metric spaces.
In the case of closed intervals in $\R$ the approach
via a supremum works well and we find that $|[a,b]|_\mathrm{Rips}=1+(b-a)$.
In the case of the circle with its Euclidean and geodesic
metrics, which we study in detail, the results are attractive
but inconclusive.

\subsection{Organisation}
We begin the paper with a series of generous background sections:
persistence modules and persistent homology 
in section~\ref{section-background-persistence},
magnitude in section~\ref{section-background-magnitude},
and magnitude homology in section~\ref{section-background-mh}.
Section~\ref{section-persistent-magnitude} introduces the persistent
magnitude of persistence modules, and studies its basic properties.
Section~\ref{section-sublevel} applies persistent magnitude 
to the persistent homology of sublevel sets.
The final part of the paper studies Rips magnitude:
section~\ref{section-rips} introduces Rips magnitude and
discusses its properties and some basic examples,
section~\ref{section-cycles} computes it in the case
of cycle graphs (with various metrics),
and section~\ref{section-infinite} explores the possibility
of defining Rips magnitude for infinite metric spaces.

\subsection{Open Questions}

The results obtained in this work raise several natural questions, that we have not yet been able to answer conclusively:

\begin{itemize}
\item What is the most general notion of tameness sufficient to develop the theory of persistent magnitude? (Our characterisation using the Laplace transform suggests that one might want to consider a notion of persistence modules of ``exponential type'', meaning that the rank function is of exponential type.)
\item Is there a general definition of Rips magnitude for (a suitable class of) infinite metric spaces? Can we establish asymptotic results similar to the case of the circle for higher-dimensional spheres or other manifolds? (Note that not much seems to be known about Rips filtrations of manifolds beyond the circle \cite{adamaszek2017vietoris}.)
\item The formulas for the Rips magnitudes of cycle graphs and Euclidean cycles seem reminiscent of the sum of divisors functions from number theory. Are there interesting connections between Rips magnitude of cycles and analytic number theory?
\end{itemize}

\section{Background on persistence modules and persistent homology}
\label{section-background-persistence}

\subsection{Persistence modules}

Here we review some standard material on persistence modules, mostly following \cite{chazal2016structure,bubenik2014categorification}. For a survey explaining the basic ideas and historical origin of persistence, see \cite{edelsbrunner2008persistent}. A modern exposition of the main ideas including the structure and stability theorems for persistence modules can be found in \cite{chazal2016structure}. For further background on persistence modules from the category theoretical perspective, see \cite{bubenik2014categorification}. A slightly more algebraic perspective, with a view towards multi-parameter persistence, can be found in \cite{lesnick2015theory}. An account of some aspects of homological algebra for persistence modules can be found in \cite{BubenikMilicevic}.

Throughout the paper, we will work with vector spaces over a fixed field $\kk$.
The category of vector spaces over $\kk$ will be denoted by $\Vect$. In the most
general setting, persistence modules can be considered over an arbitrary small
category, see e.g.~\cite{botnan2018decomposition,bubenik2014categorification};
however, we will restrict attention to the case of $(\R,\leq)$-indexed persistence
modules, as this is entirely sufficient for our purposes. 
Here $(\R,\leq)$ denotes either the poset $\R$ equipped with the partial
order $\leq$, or the associated category with objects $\R$ and a unique
morphism $x\to y$ whenever $x\leq y$.

\begin{definition}
A {\em persistence module} is a functor $M\colon(\R,\leq)\to\Vect$. A {\em morphism} of persistence modules is a natural transformation of such functors.
\end{definition}

\begin{remark}
The category $\persmod=\Vect^{(\R,\leq)}$ of persistence modules has the structure of an abelian category. In particular, morphisms of persistence modules have well-defined kernels, and cokernels. These are again persistence modules and can be computed object-wise. The zero object of this abelian category is the persistence module $0\colon (\R,\leq)\to\Vect$ all of whose components are $0$.
\end{remark}

\begin{remark}
In some cases, we will also consider {\em graded persistence modules}, which are functors $M\colon (\R,\leq)\to\GrVect$, where $\GrVect$ is the category of $\N_0$-graded vector spaces over $\kk$. Most of the content of this section generalises to the graded case in a completely straightforward way, so to avoid too much duplication, we only state it for the ungraded case.
\end{remark}

To be able to extract any sort of useful information from persistence modules, we need to understand their structure. One way of doing this is by decomposing them into indecomposable summands. The indecomposables relevant in our case are known as interval modules.

\begin{definition}
A persistence module $M\colon(\R,\leq)\to\Vect$ is {\em indecomposable} if $M\cong M_1\oplus M_2$ implies that either $M_1\cong 0$ or $M_2\cong 0$.
\end{definition}

\begin{definition}
Let $J\subseteq\R$ be an interval. The {\em interval module} $\kk J\colon (\R,\leq)\to\Vect$ is defined as
\[
\kk J(x)=\begin{cases}
\kk;&\text{if $x\in J$,}\\
0;&\text{otherwise,}
\end{cases}
\]
and
\[
\kk J(x\leq y)=\begin{cases}
\id_{\kk};&\text{if $x,y\in J$,}\\
0;&\text{otherwise.}
\end{cases}
\]
\end{definition}

One of the main features of persistence modules over $(\R,\leq)$ that makes them useful in TDA
is that they can frequently be decomposed as direct sums of interval modules. When such a decomposition exists, it is unique \cite{azumaya1950corrections}. The following version of the decomposition theorem is originally due to Crawley-Boevey \cite{crawley2015decomposition}. In the case of persistence modules over $(\R,\leq)$, it can be stated as follows.

\begin{theorem}
Suppose $M\colon(\R,\leq)\to\Vect$ is a persistence module such that $M(x)$ is finite dimensional for every $x\in\R$. Then $M$ has a decomposition into interval modules.
\end{theorem}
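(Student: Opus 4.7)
The plan is to split the argument into two parts: first, produce a decomposition of $M$ into indecomposable summands with local endomorphism rings; second, show that every such summand is isomorphic to an interval module.

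For the first part, the naive approach of iteratively splitting off indecomposable direct summands fails in general, because pointwise finite-dimensional persistence modules over $(\R,\leq)$ need not be finitely generated and there is no obvious inductive procedure. I would instead invoke the theory of pure-injective modules, following Crawley-Boevey: pointwise finite-dimensionality of $M$ forces enough chain conditions on submodules of the form $\im M(x\leq y)$ (as $x,y$ vary in $\R$) to conclude that $M$ is $\Sigma$-pure-injective as an object of $\persmod$. Once this is established, standard results on pure-injective modules provide a decomposition $M=\bigoplus_{\alpha}M_\alpha$ into indecomposables, each having local endomorphism ring; uniqueness of such a decomposition is then exactly the Azumaya-Krull-Schmidt statement cited in the preceding paragraph.

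For the second part, fix an indecomposable pointwise finite-dimensional $N$ with $\mathrm{End}(N)$ local, and set $J=\{x\in\R: N(x)\neq 0\}$. First I would show $J$ is convex: if $x<y<z$ with $x,z\in J$ and $N(y)=0$, then $N$ splits as the direct sum of the extensions by zero of $N|_{(-\infty,y)}$ and $N|_{[y,\infty)}$, a non-trivial decomposition contradicting indecomposability. Next I would show each structure map $N(x\leq y)$ with $x,y\in J$ is an isomorphism. Both injectivity and surjectivity follow from the same template: a non-trivial kernel (respectively cokernel) would allow one to build a non-trivial idempotent of $N$, which is ruled out by locality of $\mathrm{End}(N)$; the pointwise finite-dimensionality is what makes these splittings concrete. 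Finally, a check that $\dim N(x)=1$ for $x\in J$ — forced by indecomposability, since any non-trivial direct sum decomposition of a single fibre would, via the isomorphisms above, propagate to the whole module — identifies $N\cong\kk J$.

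The technical heart of the argument, and the main obstacle, is establishing $\Sigma$-pure-injectivity in step one. For persistence modules indexed by a discrete poset the argument reduces to well-known quiver-theoretic results and can be done by a fairly transparent transfinite induction on dimension; for the continuous index $(\R,\leq)$ the proof genuinely uses machinery from the model theory of modules (definable subcategories and pure-injective envelopes), and this is the real content of Crawley-Boevey's theorem. The rest of the proof, namely the identification of indecomposable summands with interval modules, is comparatively routine once the decomposition into locally indecomposable summands is in hand.
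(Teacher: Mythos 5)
The paper itself offers no proof of this statement: it is quoted as Crawley-Boevey's decomposition theorem with a citation, so your proposal has to stand on its own, and it has a genuine gap exactly where you declare the argument ``comparatively routine''. In the second part, the decisive step is your claim that if $x,y$ lie in the support of an indecomposable summand $N$ and $N(x\leq y)$ has nontrivial kernel or cokernel, then one can ``build a non-trivial idempotent'' of $N$. You never say how, and no direct construction of this kind is available: a nonzero kernel gives a proper submodule of $N$, not a direct summand, and locality of $\mathrm{End}(N)$ only rules out idempotents once you have produced one; pointwise finite-dimensionality does not by itself make the splitting ``concrete''. Moreover, any correct argument here must use the total ordering of $\R$ in an essential way: over $(\R^2,\leq)$ there are pointwise finite-dimensional indecomposables with higher-dimensional fibres and with structure maps that are neither injective nor surjective on a connected, convex support, yet your sketch of this step invokes only convexity, locality and pointwise finite-dimensionality, so if it worked it would prove a false statement in that setting. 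The actual proofs (Crawley-Boevey's functorial filtrations, or Botnan and Crawley-Boevey's construction of complements compatible with the two chains of subspaces $\im N(s\leq x)$ for $s\leq x$ and $\ker N(x\leq u)$ for $u\geq x$ inside the finite-dimensional space $N(x)$) spend most of their effort precisely on making such summands split off; this, not the existence of a decomposition into indecomposables, is the part specific to one-parameter persistence.

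Your first part is a legitimate known route, but you have the difficulty inverted. Establishing $\Sigma$-pure-injectivity from pointwise finite-dimensionality is essentially immediate: every pp-definable subgroup of $M$ in the sort $x$ is a $\kk$-subspace of the finite-dimensional space $M(x)$, so the descending chain condition holds trivially; the heavy imports are the classical decomposition theorem for $\Sigma$-pure-injective modules and Azumaya's uniqueness, both of which you may reasonably cite. Note also that this half works over an arbitrary indexing poset, so it cannot be ``the real content'' of a theorem that is special to totally ordered index sets; the real content is the identification of indecomposables with interval modules, which is the half your proposal leaves unproved. The convexity argument for the support and the final reduction from ``all structure maps on the support are isomorphisms'' to one-dimensional fibres are fine as stated.
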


Whenever a persistence module $M\colon(\R,\leq)\to\Vect$ decomposes as a sum of interval modules, we can represent it using a {\em persistence barcode}. This is defined as the multiset of all intervals that occur in the decomposition. Sometimes we represent these intervals as pairs $(a,b)$ where $a$ is the startpoint and $b$ is the endpoint of an interval in the decomposition. (These points are sometimes decorated to preserve information regarding which types of intervals the points correspond to, see \cite{chazal2016structure} for details.) The multiset of such pairs is called the {\em persistence diagram} corresponding to $M$. The notion of persistence diagram can be generalised to some cases where the interval decomposition does not exist \cite{chazal2016structure}.

We will often concentrate on the case of \emph{finitely presented}
persistence modules.  
Note that a persistence module is finitely presented if and only if it is isomorphic to a finite direct sum of half-open interval modules $\kk[a,b)$, where $-\infty<a<b\leq\infty$.

\begin{definition}
The \emph{tensor product} of two persistence modules $M$ and $N$ is given by
\[
(M\otimes N)(s) = \colim_{s_1+s_2\leq s}M(s_1)\otimes N(s_2).
\]
Thus $(M\otimes N)(s)$ is the quotient of
$\bigoplus_{s_1+s_2=s}M(s_1)\otimes M(s_2)$
obtained as follows.
Suppose given $u_1,u_2$ with $u_1+u_2\leq s$.
Then for any pair $v_1,v_2$ with $v_1+v_2=s$
and $u_1\leq v_1$, $u_2\leq v_2$,
we have a composite
\begin{equation}\label{equation-tensor}
	M(u_1)\otimes N(u_2)
	\to
	M(v_1)\otimes N(v_2)
	\hookrightarrow
	\bigoplus_{s_1+s_2=s}M(s_1)\otimes M(s_2).
\end{equation}
Then $(M\otimes N)(s)$ is the largest quotient
of $\bigoplus_{s_1+s_2=s}M(s_1)\otimes M(s_2)$
with the property that for all $u_1,u_2$
all such composites \eqref{equation-tensor} coincide,
regardless of the choice of $v_1,v_2$.
See Section~3.2 of~\cite{BubenikMilicevic}
or Section~2.2 of~\cite{PSS}.
\end{definition}

The operation of tensoring with a fixed persistence
module is right exact but not exact, and therefore
induces \emph{derived functors} denoted by
$M,N\mapsto\tor_i(M,N)$ for $i\geq 0$,
with $\tor_0(M,N)=M\otimes N$.

For finitely presented persistence modules
the tensor products and $\tor$-functors can be described explicitly.
In order to do this, it suffices to explain what happens for
interval modules.
Given interval modules $\kk[a,b)$ and $\kk[c,d)$, we have
\begin{align*}
	\kk[a,b)\otimes \kk[c,d) &= \kk[a+c,\min(a+d,b+c)),
	\\
	\tor_1(\kk[a,b), \kk[c,d)) &= \kk[\max(a+d,b+c),b+d),
	\\
	\tor_i(\kk[a,b), \kk[c,d)) &= 0
	\text{ for }i\geq 2.
\end{align*}
See Example~7.1 of~\cite{BubenikMilicevic}.
%Now we have the following.

\subsection{Persistent homology}
Persistence modules have an important role in TDA, where they are used
in order to study data sets in the form of finite metric spaces,
also known as \emph{point clouds}.  The idea is to take a finite metric space 
$X$ and convert it into a simplicial complex 
(or topological space or other topological object)
$Y$ equipped with an $(\R,\leq)$-filtration, 
i.e.~a system of subsets $Y_r\subseteq Y$ for $r\in\R$, 
such that $\bigcup_{r\in\R}Y_r=Y$ and $Y_r\subseteq Y_{r'}$ for $r<r'$.
There are many such constructions, and they are often based on the principle
that $Y_r$ should capture the behaviour of $X$ `at length scale $r$'.
Given such an $(\R,\leq)$-filtered complex $Y$,
the assignment $r\mapsto Y_r$ defines a functor
from $(\R,\leq)$ into simplicial complexes (or topological spaces,
or other appropriate codomain).  So taking the homology 
of the $Y_r$ then produces a graded persistence module 
\[
	r\longmapsto H_\ast(Y_r).
\]
These persistence modules are called the \emph{persistent homology} of 
the original object $X$.
Once the persistent homology of $X$ has been obtained,
the resulting barcode is then analysed.  The bars are regarded as features
of the metric space $X$. 
Longer or \emph{persistent} bars are often
regarded as genuine features, while shorter bars are often regarded as noise,
though there are other interpretations of the barcode.

Here we will describe some important examples of this general construction,
starting with the Vietoris-Rips filtration and the \v{C}ech filtration. 

\begin{definition}
Suppose $(X,d)$ is a finite metric space. We define the {\em Vietoris-Rips complex $\calR(X)$} of $(X,d)$ to be the $(\R,\leq)$-filtered simplicial complex with vertex set $X$, in which the simplices of the $r$-th filtration step $\calR_r(X)$ are defined by the rule
\[
\sigma\in\calR_r(X)\Leftrightarrow\diam\sigma\leq r.
\]
\end{definition}

In some cases, we consider $X$ as a subspace of some larger metric space $Y$, e.g.~$Y=\R^n$. In this case we can define the corresponding \v{C}ech complex as follows:

\begin{definition}
Suppose $(Y,d)$ is a metric space and $X\subseteq Y$ is a finite subset. We define the {\em \v{C}ech complex $\calCh(X)$} associated to $X$ to be the $(\R,\leq)$-filtered simplicial complex with vertex set $X$, in which the simplices of the $r$-th filtration step $\calCh_r(X)$ are defined by the rule
\[
\sigma\in\calCh_r(X)\iff\bigcap_{x\in\sigma}B(x,r)\neq\emptyset,
\]
where $B(x,r)$ denotes the open ball in $Y$ with centre $x$ and radius $r$.
\end{definition}

Note that both the Vietoris-Rips and the \v{C}ech complex are filtrations of the simplex spanned by the vertices of $X$.

A related source of persistence modules are sublevel set filtrations. These are associated to a function $f\colon X\to\R$, where $X$ is a topological space. They are motivated by ideas of Morse theory, where $X=M$ is assumed to be a smooth manifold and $f$ is a Morse function (smooth function whose critical points are nondegenerate).

\begin{definition}
Let $f\colon X\to\R$ be a (continuous) function on a topological space $X$. The {\em sublevel set filtration} associated to $(X,f)$ is the family $(X^a)_{a\in\R}$ where $X^a=f^{-1}(-\infty,a]$, which can also be viewed as a functor $S\colon (\R,\leq)\to\Top$. Composing this functor with $k$-th singular homology yields a persistence module $H_k\circ S$ which is called the {\em $k$-th sublevel set persistent homology of $(X,f)$}.
\end{definition}

Other examples of persistence modules that have been used are lower
star filtrations of simplicial complexes, alpha (or Delaunay) complexes, wrap complexes,
witness complexes, and many more besides \cite{edelsbrunner2010computational,de2004topological,bauer2017morse}.

In order to ensure stability of persistence modules arising in applications
despite the noise arising from imprecise measurements, it is important to be able to use
approximation techniques. This is done using the notion of $\epsilon$-interleavings.
These provide a way to formalise the intuitive notion of approximate isomorphism
of persistence modules and can be used to define a notion of distance on the category
of persistence modules. For details, see \cite{chazal2016structure,bubenik2014categorification,lesnick2015theory}.

\section{Background on magnitude of metric spaces}
\label{section-background-magnitude}

In this section we will introduce the magnitude of metric spaces.
This is a numerical invariant of metric spaces developed by Tom Leinster
in~\cite{LeinsterMetricSpace}, building on earlier work defining numerical
invariants of categories~\cite{LeinsterEulerCharCategory}.  Despite
these abstract origins, magnitude turns out to be an interesting invariant containing meaningful geometric information.
Here we will introduce the basics and attempt to give readers
an impression of magnitude's interest and reach.
Readers who wish to know more are strongly recommended to take a look 
at Leinster's original paper~\cite{LeinsterMetricSpace}
and Leinster and Meckes's survey~\cite{survey}.
We note here that magnitude of metric spaces is just one instance
of a more general invariant of enriched categories.
The latter is developed in section~1 of~\cite{LeinsterMetricSpace},
and we will not say anything about it here.

Here, and in the rest of the paper, we will use the symbol
$|X|$ to denote the magnitude of an object $X$.
To avoid notational clashes, we will use the symbol
$\#X$ to denote the cardinality of a finite set $X$.

\subsection{Magnitude of finite metric spaces}

We begin with the magnitude of finite metric spaces.
This is based almost entirely on section~2 of~\cite{LeinsterMetricSpace}.

\begin{definition}[Magnitude via weightings]
	Let $(X,d)$ be finite metric space. 
	A \emph{weighting} on $X$ is a function $w\colon X\to\R$ 
	such that the equality
	\[
		\sum_{y\in X}e^{-d(x,y)}w(y)=1
	\]
	is satisfied for every $x\in X$. 
	If $X$ admits a weighting, then we define the \emph{magnitude} 
	of $X$ to be 
	\[
		|X|=\sum_{x\in X}w(x).
	\]
	This is independent of the choice of weighting.
	If no weighting exists, then the magnitude of $X$ is not defined.
\end{definition}

\begin{remark}[Magnitude via matrices]
	Suppose that $X$ is a finite metric space with elements
	$x_1,\ldots,x_n$, and let
	$Z_X$ denote the $n\times n$ matrix with
	$(Z_X)_{ij}=e^{-d(x_i,x_j)}$.  
	If it happens that $Z_X$ is invertible,
	then the magnitude of $X$ is defined and is given by the formula
	\begin{equation}\label{equation-inverse}
		|X| = \sum_{i,j=1}^n (Z_X^{-1})_{ij}.
	\end{equation}
	It can happen that $|X|$ is defined (using weightings)
	in cases where $Z_X$ is not invertible.
	(See Lemma~1.1.4 of~\cite{LeinsterMetricSpace}.)
\end{remark}

For $t>0$, we let $tX$ be the metric space $X$ rescaled by $t$, 
so that $d_{tX}(x,y)=td_X(x,y)$.  
There is no simple relationship between $|tX|$ and $|X|$,
and as a consequence we gain information
by considering all rescalings at once, as in the following definition.

\begin{definition}[The magnitude function]
\label{definition-magnitude-function}
	Let $X$ be a finite metric space.
	Its \emph{magnitude function} is the (partially defined)
	function from $(0,\infty)$ to $\R$ given by
	\[
		t\mapsto |tX|.
	\]
\end{definition}

\begin{example}[Magnitude of the one-point space]
	Let $X$ denote the space consisting of a single point $x$.
	Then $Z_X$ is the $1\times 1$ matrix $(1)$,
	so that $Z_X^{-1}=(1)$ and formula~\eqref{equation-inverse}
	gives us $|X| = 1$.
\end{example}

\begin{example}[Magnitude of two-point spaces]	
	Let $X=\{x_1,x_2\}$ be the two-point space in which
	$d_X(x_1,x_2)=d$ for some $d>0$.
	Then
	\[
		Z_X = \begin{pmatrix} 1 & e^{-d} \\ e^{-d} & 1\end{pmatrix}
	\]
	so that 
	\[
		Z_X^{-1} = \frac{1}{1-e^{-2d}}
		\begin{pmatrix} 1 & -e^{-d} \\ -e^{-d} & 1  \end{pmatrix}
	\]
	and consequently
	\[
		|X|=\frac{2-2e^{-d}}{1-e^{-2d}}=\frac{2}{1+e^{-d}}.
	\]
	The same computation shows that the magnitude function of $X$
	is given by
	\[
		|tX|=\frac{2}{1+e^{-dt}}
	\]
	with graph:
	\begin{center}
	\includegraphics[width=200pt]{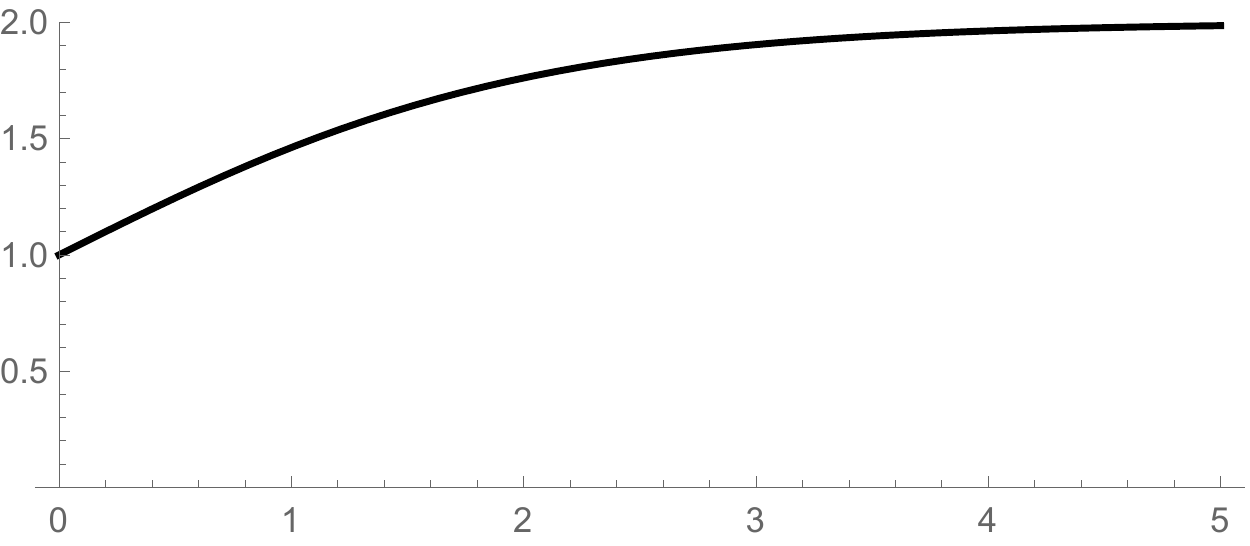}
	\end{center}
	We see in this case that $|tX|$
	varies between $1$ and $2$, tending to $1$ as $t\to 0$
	and to $2$ as $t\to \infty$.
	This suggests that magnitude is an `effective number of points',
	regarding two points as essentially the same if they are very close,
	and essentially different if they are very far apart.
	The latter property generalises.
\end{example}

\begin{proposition}[{Leinster~\cite[Proposition~2.2.6]{LeinsterMetricSpace}}]
	Let $X$ be a finite metric space.
	Then $|tX|\to \#X$ as $t\to\infty$,
	where $\#X$ denotes the cardinality of $X$.
\end{proposition}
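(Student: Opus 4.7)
The plan is to work directly with the matrix formula \eqref{equation-inverse} and exploit the fact that, as $t\to\infty$, the similarity matrix $Z_{tX}$ converges to the identity matrix.

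First I would enumerate $X=\{x_1,\ldots,x_n\}$ (so $n=\#X$) and examine the entries of $Z_{tX}$. The diagonal entries are $(Z_{tX})_{ii}=e^{-t\cdot 0}=1$, while for $i\neq j$ the entries are $(Z_{tX})_{ij}=e^{-t\,d(x_i,x_j)}$. Since $X$ is a finite metric space, $d(x_i,x_j)>0$ whenever $i\neq j$, so each off-diagonal entry tends to $0$ as $t\to\infty$. Hence $Z_{tX}\to I_n$ entrywise as $t\to\infty$.

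Next I would invoke continuity of matrix inversion on the open set $\mathrm{GL}_n(\R)\subset M_n(\R)$. Because $I_n$ is invertible and $Z_{tX}\to I_n$, there exists $T>0$ such that $Z_{tX}$ is invertible for all $t\geq T$; in particular the magnitude $|tX|$ is defined (via the matrix formula) for all such $t$, and $Z_{tX}^{-1}\to I_n^{-1}=I_n$ as $t\to\infty$. Summing all entries is a continuous linear functional on $M_n(\R)$, so
\[
|tX|=\sum_{i,j=1}^n (Z_{tX}^{-1})_{ij}\longrightarrow\sum_{i,j=1}^n (I_n)_{ij}=n=\#X,
\]
as desired.

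There is no real obstacle: the only point requiring a moment's care is that $|tX|$ might be defined via a weighting even when $Z_{tX}$ fails to be invertible, but for the purposes of computing the limit we only need $Z_{tX}$ to be invertible eventually (for $t\geq T$), which is guaranteed by the continuity argument above. Thus the limit statement only concerns the tail of the function, and the proof reduces to the entrywise convergence $Z_{tX}\to I_n$ plus continuity of inversion and summation.
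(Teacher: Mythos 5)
Your argument is correct: since $X$ is finite with all off-diagonal distances strictly positive, $Z_{tX}\to I_n$ entrywise, and openness of $\mathrm{GL}_n(\R)$ plus continuity of inversion and of summing entries give both eventual definedness and the limit $|tX|\to \#X$. The paper itself offers no proof (it simply cites Leinster's Proposition~2.2.6), and your reasoning is essentially the standard argument used there, so there is nothing to add.
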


\begin{example}
\begin{itemize}
	\item
	It is known that all metric spaces with four points
	or less have magnitude, but there exist spaces with five or more
	points that do not have magnitude
	(See pages 870-871 of~\cite{LeinsterMetricSpace}).

	\item
	There is a simple formula due to Speyer 
	for the magnitude of \emph{homogeneous}
	metric spaces, i.e.~those that admit a transitive group action
	(see Proposition~2.1.5 of~\cite{LeinsterMetricSpace}).
	This allows one to compute magnitude of many
	simple spaces, for example complete graphs and cyclic graphs.
	(Graphs are always regarded as metric spaces
	by equipping them with the shortest path metric.)

	\item
	The magnitude function of a finite metric space $X$
	can take negative values, it can take values greater than $\#X$,
	and it can have intervals on which it is increasing
	or decreasing.  Example~2.2.7 of~\cite{LeinsterMetricSpace}
	gives a demonstration of this on a space $X$ with 5 points.
	
	\item
	It is not always true that $|tX|\to 1$ as $t\to 0$.
	An example due to Willerton describes a metric space
	with $6$ points for which $|tX|\to 6/5$ as $t\to 0$.
	(See Example~2.2.8 of~\cite{LeinsterMetricSpace}.)
\end{itemize}
\end{example}

\begin{remark}[Magnitude and data]
	One may take a data set in the form of 
	a finite subspace of Euclidean space, 
	and take its magnitude or magnitude function, which in this case
	is always defined.
	The result is a potentially interesting invariant of such data sets.
	But for this to be useful, one would like to know that
	the invariant is stable under perturbations of the data set.
	In mathematical terms, one would like to know that magnitude
	is continuous with respect to the Hausdorff metric on subsets
	of Euclidean space.
	This is currently unknown, although Meckes has shown
	that in this situation the function $X\mapsto |X|$ is 
	\emph{lower semicontinuous},
	meaning roughly that magnitude may jump upwards but not downwards.
	(See Theorem~2.6 of~\cite{MeckesPositiveDefinite}
	and the paragraph that follows it.)
\end{remark}

\subsection{Magnitude of compact metric spaces}

Magnitude also makes sense for certain classes of compact,
infinite metric spaces.  Here we will recall the relevant definition
and some of the main results.  Good references for this section
are section~3 of~\cite{LeinsterMetricSpace} and the survey~\cite{survey}.

In the following we will consider \emph{positive definite}
metric spaces, which are metric spaces $X$ with the property that 
for every finite subspace $F$ 
the matrix $Z_F$ is positive definite.
Any subset of Euclidean space, with its induced metric, is positive
definite.

\begin{definition}
Let $(X,d)$ be a compact positive definite metric space. \emph{The magnitude} of $X$ is defined by the formula:
\[
|X|=\sup\{|W| : W\subseteq X,\ W\text{ finite.}\}
\]
The \emph{magnitude function} of $X$ is defined by $t\mapsto |tX|$
for $t\in [0,\infty)$.
\end{definition}

\begin{example}
We let $S^1_\mathrm{eucl}$ denote the Euclidean circle, i.e.~the unit circle
in the plane with its induced metric.  And we let $S^1_\mathrm{geo}$ denote
the same circle with its geodesic metric of total arclength $2\pi$.
Both are positive definite.
Then the magnitude function of the Euclidean circle is given by
\[
|t\cdot S^1_\mathrm{eucl}|=\pi t+O(t^{-1})\qquad\text{as}\qquad t\to\infty
\]
and the magnitude function of the geodesic circle is given by
\[
|t\cdot S^1_{\mathrm{geo}}|=\frac{\pi t}{1-e^{-\pi t}}.
\]
See Theorems~13 and~14 of~\cite{asymptotic}. It has been argued \cite{asymptotic}
that the linear term $\pi t$ in these expressions corresponds to half the length
of the circle, whereas the absence of the constant term corresponds to the fact
that the Euler characteristic of the circle is zero.
\end{example}

\begin{remark}
	In general, computing the magnitude of infinite spaces is difficult,
	and existing computations tend to require a significant 
	amount of analysis.
	A useful survey on this subject is given in~\cite{survey}.
	Important recent progress by Gimperlein and 
	Goffeng~\cite{GimperleinGoffeng}
	shows that for appropriate $X\subseteq\R^{2n+1}$,
	the asymptotics of the magnitude function 
	as $t\to\infty$ encode geometric properties including volume, 
	surface area and mean curvature.
\end{remark}

\section{Background on magnitude homology}
\label{section-background-mh}

Singular homology can be regarded as a \emph{categorification}
of the Euler characteristic: The Euler characteristic is a function
taking values in the set of integers, 
whereas homology is a functor taking values in the category
of graded abelian groups, and the 
function can be obtained from the functor by taking the alternating
sum of the ranks:
\[
	\chi(X) = \sum_{i=0}^\infty (-1)^i \rank H_i(X)
\]
This is a classical story, but 
there are more recent examples of such categorifications,
notably Khovanov homology, which categorifies the Jones polynomial,
and Knot Floer homology, which categorifies the Alexander polynomial.

Hepworth and Willerton~\cite{richard}
together with Leinster and Shulman~\cite{shulman}
introduced \emph{magnitude homology}, a categorification of
magnitude.  (Precisely, Hepworth and Willerton first introduced
magnitude homology in the case of graphs, 
and Leinster and Shulman later extended
this to arbitrary metric spaces and very general enriched categories.)
More recently, Nina Otter~\cite{Otter} introduced a persistent
version of magnitude homology called \emph{blurred magnitude homology}.

In this section we will introduce magnitude homology
and its blurred variant, and we will conclude by giving
an explicit formula to extract the magnitude of a space
from the barcode of its blurred magnitude homology.
It is this story that we will \emph{reverse} in the
rest of the paper, using its conclusion as the \emph{definition}
of the magnitude of persistence modules.
Applying this to persistent homology theories other than
blurred magnitude homology, we will then 
obtain new notions of magnitude of metric spaces.

\subsection{Magnitude homology}
Given a metric space $X$ and elements $x_0,\ldots,x_k\in X$,
we define
\[
	\ell(x_0,\ldots,x_k)
	=
	d(x_0,x_1)+d(x_1,x_2)+\cdots+d(x_{k-1},x_k).
\]
We think of this as the \emph{length} of the tuple $(x_0,\ldots,x_k)$.

\begin{definition}[Hepworth-Willerton~\cite{richard},
Leinster-Shulman~\cite{shulman}]\label{definition-mh}
	The \emph{magnitude chain complex} of a 
	metric space $X$ consists of the abelian groups 
	\[
		\MC_{k,l}(X)
		=
		\left\langle
			(x_0,\ldots,x_k)\in X^{k+1}
			\ \middle|\ 
			\begin{array}{l}
			x_{0}\neq x_1\neq\cdots\neq x_k,\\
			\textstyle \ell(x_0,\ldots,x_k)=l
			\end{array} 
		\right\rangle
	\]
	with $l\in[0,\infty)$ and $k$ a non-negative integer.
	Here, and in what follows, angled brackets $\langle\quad\rangle$
	denote free $\Z$-modules.
	The boundary operators 
	\[
		\partial_{k,l}\colon \MC_{k,l}(X)\to \MC_{k-1,l}(X)
	\]
	are defined by the rule
	\[
		\partial_{k,l}(x_0,\ldots,x_k)
		=
		\sum_{i=0}^k (-1)^i(x_0,\ldots,\widehat{x_i},\ldots,x_k),
	\]
	where the term $(x_0,\ldots,\widehat{x_i},\ldots,x_k)$
	is omitted if 
	$\ell(x_0,\ldots,\widehat{x_i},\ldots,x_k)<l$.
	The \emph{magnitude homology} $\MH_{k,l}(X)$ of $X$ is defined to
	be the homology of the magnitude chains
	\[
		\MH_{k,l}(X)=H_k(\MC_{\ast,l}(X))
	\]
	where again $k$ is a non-negative integer and $l\in[0,\infty)$.
\end{definition}

Magnitude homology is a categorification of the magnitude,
in the sense that the graded Euler characteristic
of magnitude homology coincides with the magnitude itself,
as shown in the next proposition.
This is categorification in the same sense that Khovanov homology
categorifies the Jones polynomial, and that knot Floer homology
categorifies the Alexander polynomial.

\begin{proposition}[Hepworth-Willerton~\cite{richard},
Leinster-Shulman~\cite{shulman}]
\label{proposition-alternating}
	Let $X$ be a finite metric space. Then
	\[
		|tX|=\sum_{l\in[0,\infty)}
		\sum_{k=0}^\infty
		(-1)^k\rank(\MH_{k,l}(X))e^{-lt}
	\]
	for $t$ sufficiently large.
\end{proposition}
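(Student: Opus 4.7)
The plan is to run the standard categorification argument: apply the Euler–Poincaré principle to each chain complex $\MC_{\ast,l}(X)$ to replace homology ranks by chain ranks, then recognise the resulting generating series as the $(\mathbf{1},\mathbf{1})$-entry of the Neumann series for $Z_{tX}^{-1}$, which computes $|tX|$ by the matrix formula for magnitude.

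First I would fix a finite metric space $X$ and let $\delta>0$ be the smallest positive distance in $X$. For each $l\in[0,\infty)$ the chain groups $\MC_{k,l}(X)$ are free of finite rank and vanish outside the range $0\leq k\leq l/\delta$, so the chain complex $\MC_{\ast,l}(X)$ has a well-defined Euler characteristic, and
\[
\sum_{k=0}^\infty (-1)^k\rank\MH_{k,l}(X)=\sum_{k=0}^\infty(-1)^k\rank\MC_{k,l}(X).
\]
Observe also that the set $L\subseteq[0,\infty)$ of values $l$ for which $\MC_{\ast,l}(X)$ is nonzero is the set of lengths $\ell(x_0,\ldots,x_k)$ of tuples with $x_0\neq x_1\neq\cdots\neq x_k$; this is a subset of the additive semigroup generated by the finitely many pairwise distances in $X$, hence countable and locally finite.

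Next I would substitute this into the right-hand side of the proposition and swap the order of summation, which is justified by absolute convergence once $t$ is taken large enough (below). This transforms the target expression into
\[
\sum_{k=0}^\infty(-1)^k\sum_{\substack{(x_0,\ldots,x_k)\in X^{k+1}\\ x_0\neq x_1\neq\cdots\neq x_k}}e^{-t\,\ell(x_0,\ldots,x_k)}.
\]
Now let $A=A(t)$ be the $\#X\times\#X$ matrix with $A_{xy}=e^{-t\,d(x,y)}$ for $x\neq y$ and $A_{xx}=0$, so that $Z_{tX}=I+A$. Since $e^{-t\,\ell(x_0,\ldots,x_k)}=\prod_{i=0}^{k-1}e^{-t\,d(x_i,x_{i+1})}$, the inner sum above is precisely $\mathbf{1}^{T}A^{k}\mathbf{1}$, with the convention that $A^{0}=I$ (matching the $k=0$ term $\#X$). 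Hence the whole expression equals $\mathbf{1}^{T}\bigl(\sum_{k\geq 0}(-1)^{k}A^{k}\bigr)\mathbf{1}$.

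Finally I would justify the Neumann series: as $t\to\infty$, every off-diagonal entry of $A(t)$ tends to $0$, so for $t$ sufficiently large the operator norm of $A(t)$ is strictly less than $1$. Then $\sum_{k\geq 0}(-1)^{k}A^{k}=(I+A)^{-1}=Z_{tX}^{-1}$, and the matrix formula \eqref{equation-inverse} gives $\mathbf{1}^{T}Z_{tX}^{-1}\mathbf{1}=|tX|$. This same norm bound retroactively justifies the interchange of summations used earlier. The main obstacle is bookkeeping rather than depth: one must check that for fixed large $t$ the double series over $k$ and $l$, and the Neumann series for $(I+A(t))^{-1}$, converge absolutely in a compatible way so that the reordering is legal; everything else is a direct calculation.
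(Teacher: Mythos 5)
Your argument is correct, and in fact the paper does not prove this proposition at all: it is quoted as background from Hepworth--Willerton and Leinster--Shulman, and the argument you give is essentially the standard one from those sources (Euler--Poincar\'e on each fixed-$l$ complex $\MC_{\ast,l}(X)$, which is a bounded complex of finitely generated free abelian groups since $\MC_{k,l}(X)=0$ for $k>l/\delta$, followed by the identification of $\sum_k(-1)^k\mathbf{1}^{T}A^{k}\mathbf{1}$ with $\mathbf{1}^{T}Z_{tX}^{-1}\mathbf{1}$ via the Neumann series once $t$ is large enough that, say, the maximum row sum of $A(t)$ is below $1$). Your convergence bookkeeping matches what the paper itself records in Remark~\ref{remark-l-values} and reuses in the proof of Theorem~\ref{theorem-magBMH}: the bound $\rank\MC_{k,l}(X)\leq (\#X)^{k+1}$ together with $l\geq k\delta$ gives absolute convergence of the double series under the standing assumption $(\#X)e^{-\delta t}<1$, which legitimises both the interchange of the $k$- and $l$-sums and the interpretation of the sum over the uncountable index set $[0,\infty)$ as a sum over the locally finite set of $\ell$-values. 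The only point worth making explicit is that invertibility of $Z_{tX}$ (needed for the matrix formula~\eqref{equation-inverse}) also follows from the same norm bound, so no separate hypothesis is required; with that noted, the proof is complete.
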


\begin{remark}\label{remark-l-values}
	The formula above requires some elaboration.
	Consider the set of real numbers occuring as
	$\ell(x_0,\ldots,x_k)$ for $x_0,\ldots,x_k\in X$,
	$k\geq 0$, with consecutive $x_i$'s distinct.
	Let us call these \emph{$\ell$-values}.
	Since $X$ is finite, 
	there is a positive minimum nonzero distance between elements of $X$,
	call it $\delta>0$, and then 
	all $\ell$-values satisfy the following inequality:
	\begin{equation}\label{equation-ell-bound}
		\ell(x_0,\ldots,x_k)\geq \delta k
	\end{equation}

	A first consequence of equation~\eqref{equation-ell-bound}
	is that, for a fixed choice of $l\in[0,\infty)$,
	the set of $k$ for which $\MH_{k,l}(X)\neq 0$ is bounded
	above by $l/\delta$.  
	That is because if $\MH_{k,l}(X)\neq 0$ then $l$ must be an
	$\ell$-value $\ell(x_0,\ldots,x_k)$.
	It follows that in Proposition~\ref{proposition-alternating}
	the inner sum is finite for each $l$.

	The second consequence of equation~\eqref{equation-ell-bound}
	is that for any positive real $N$, the collection of
	$\ell$-values satisfying $\ell(x_0,\ldots,x_k)\leq N$
	is finite (because then $k\leq N/\delta$, and $X$ is finite).
	It follows that the set of all $\ell$-values
	can be totally ordered $0=l_0<l_1<l_2<\cdots$.
	Thus the outer series 
	in Proposition~\ref{proposition-alternating} 
	can be rewritten as the (infinite) sum over the $l_i$.
\end{remark}	

\begin{example}[Magnitude homology of graphs]
	A graph can be regarded as a metric space 
	by taking the set of vertices and equipping them with
	the shortest path metric.  This is the original setting
	of magnitude homology in~\cite{richard},
	where a number of explicit examples 
	(done using computer algebra) are
	described.  
	We include two of these here 
	as an illustration.
	Figure~\ref{TableFiveCycle} shows the ranks of the
	magnitude homology $\MH_{k,l}(C_5)$ of the cyclic graph
	with $5$ vertices,
	and Figure~\ref{TablePetersen} shows the ranks of 
	the magnitude homology $\MH_{k,l}(\mathit{P\!etersen})$ of 
	the Petersen graph.
	(Note that the images and tables in Figures~\ref{TableFiveCycle}
	and~\ref{TablePetersen} are taken directly from~\cite{richard}.)
	Observe that in each case, the rank of $\MH_{0,0}(G)$
	is the number of vertices, and the rank of $\MH_{1,1}(G)$
	is the number of oriented edges.  These are general features,
	but the question of what data is encoded in $\MH_{k,l}(G)$
	for other choices of $k$ and $l$ remains mysterious.
	Another general feature visible here is that 
	the nonzero magnitude homology
	groups lie in a range of pairs $(k,j)$ bounded by two
	diagonals, one of them the diagonal $k=j$, and the
	other determined by the diameter of the graph.
	
	\begin{figure}
	\begin{center}
	\begin{tikzpicture}[scale=0.5, baseline=0]
	\foreach \x in {0,72,...,288}
	    \draw (\x+90:2cm) -- (\x+72+90:2cm);
	%\draw (0:1cm) -- (180:1cm);
	\foreach \x in {0,72,...,288}
	    \draw [fill=red](\x+90:2cm) circle (0.1cm);
	\end{tikzpicture}
	%
	%\quad
	%
	\footnotesize
	\begin{tabular}{rrrrrrrrrrrrrr}
	&&&&&&&$k$\\
	&&0&1&2&3&4&5&6&7&8&9&10&11\\
	\otoprule                                                                 
	&0 & 5\\
	& 1   & &     10                   \\                                          
	 &2     & &&         10             \\                                          
	& 3    &&&           10  &  10       \\                                          
	& 4         &&&&            30  &  10  \\                                         
	& 5          &&&&&                 50   & 10                          \\           
	$l$& 6          &&&&&                 20  &  70   & 10                    \\           
	& 7           &&&&&&                      80  &  90  &  10              \\           
	& 8           &&&&&&&                           180  & 110 &   10      \\             
	& 9            &&&&&&&                          40 &  320  & 130  &  10 \\            
	&10           &&&&&&&&                                 200 &  500  & 150 &   10 \\      
	&11           &&&&&&&&&                                       560  & 720 &  170  &  10\\ 
	\bottomrule
	\smallskip
	\end{tabular}
	\end{center}
	\caption{The ranks of $\MH_{k,l}(C_5)$.
	(Taken from~\cite[Table 1]{richard})}
	\label{TableFiveCycle}
	\end{figure}

	\begin{figure}
	\begin{center}
	\begin{tikzpicture}[baseline=0cm]
	  \tikzset{VertexStyle/.style= {shape=circle,  color=black, fill=red, inner sep=0pt, 
		       minimum size = 0.1cm, draw}}
	   \SetVertexNoLabel
	   \tikzset{EdgeStyle/.style= {thick}} 
	   \grPetersen[form=1,RA=1,RB=0.6]%
	\end{tikzpicture}\quad
	\footnotesize
	\begin{tabular}{rrrrrrrrrrr}
	&&&&&$k$\\
	&0&1&2&3&4&5&6&7&8\\
	\otoprule                                                                 
	& 0 & 10\\
	&  1   & &     30                   \\                                          
	&  2     & &&         30             \\                                          
	&  3    &&&           120  &  30       \\                                          
	$l$ &  4         &&&&            480  &  30  \\                                         
	&  5          &&&&&                 840   & 30                          \\           
	&  6          &&&&&                 1440  &  1200   & 30                    \\           
	&  7           &&&&&&                      7200  &  1560  &  30              \\           
	&  8           &&&&&&&                           17280  & 1920 &   30      \\              
	\bottomrule
	\smallskip
	\end{tabular}
	\end{center}
	\caption{The ranks of $\MH_{k,l}(\mathit{P\!etersen})$.
	(Taken from~\cite[A.3.1]{richard}.) }
	\label{TablePetersen}
	\end{figure}
\end{example}

Magnitude homology has many good characteristics of homology theories
and categorification:
\begin{itemize}
	\item
	Magnitude homology refines magnitude:
	there are finite metric spaces with the same
	magnitude but non-isomorphic magnitude homologies~\cite{Gu}.

	\item
	Magnitude homology can contain 
	torsion~\cite{KanetaYoshinaga}.
	Thus the magnitude homology contains more data
	than just the ranks $\rank(\MH_{k,l}(X))$.

	\item
	Magnitude homology has properties that categorify
	known properties of the magnitude.
	In the setting of graphs,
	a K\"unneth theorem
	categorifies the known product formula for magnitude,
	and a Mayer-Vietoris sequence categorifies the known
	inclusion-exclusion formula.
	See~\cite{richard}.
	
	\item
	Magnitude homology contains information about geometric
	features of a metric space, for example it can precisely detect 
	the property of being Menger convex, and it contains obstructions
	to the existence of upper bounds on curvature, and to the existence
	of closed geodesics. See~\cite{shulman}, \cite{Gomi}, \cite{Asao}.

	\item
	Magnitude homology has been computed fully in several
	interesting examples, including trees, complete graphs,
	cycle graphs, and the icosahedral graph.
	See~\cite{richard} and~\cite{Gu}.
\end{itemize}

\subsection{Blurred magnitude homology}

We now describe some recent work of Nina Otter~\cite{Otter}
that connects magnitude homology with persistent homology,
specifically the Vietoris-Rips complex. We also give a new
result that relates magnitude with barcodes for the first time.

\begin{definition}[Otter~\cite{Otter}]\label{definition-bmh}
	The \emph{blurred magnitude chain complex} of a 
	metric space $X$ is the chain complex of persistence
	modules $\BMC_\ast(X)$ defined by the rule
	\[
		\BMC_k(X)(l)
		=
		\left\langle
			(x_0,\ldots,x_k)\in V^{k+1}
			\ \middle|\ 
			\begin{array}{l}
			x_{0}\neq x_1\neq\cdots\neq x_k,\\
			\textstyle \ell(x_0,\ldots,x_k)\leq l
			\end{array} 
		\right\rangle
	\]
	where $l$ is the persistence parameter
	and $k$ is a non-negative integer.
	The boundary operators 
	\[
		\partial_{k}\colon \BMC_k(X)\to \BMC_{k-1}(X)
	\]
	are defined by the rule
	\[
		\partial_{k,l}(x_0,\ldots,x_k)
		=
		\sum_{i=0}^k (-1)^i(x_0,\ldots,\widehat{x_i},\ldots,x_k).
	\]
	The \emph{blurred magnitude homology} $\BMH_\ast(X)$ 
	of $X$ is defined to be the homology of the blurred magnitude chains:
	\[
		\BMH_k(X)=H_k(\BMC_\ast(X))
	\]
	for $k$ a non-negative integer.
\end{definition}

\begin{remark}[Blurred magnitude homology and the Rips complex]
	One of the main results of Otter's paper~\cite{Otter}
	is that it compares
	the blurred magnitude homology of a metric space $X$ with 
	the homology of its Rips complex.
	The main idea of this comparison is that there are maps
	\[
		\BMC_k(X)(s)\to C_k(\calR^\mathrm{sim}(X)(s))
		\text{\ \ and\ \ }
		C_k(\calR^\mathrm{sim}(X)(s))\to \BMC_k(X)(ks),
	\]
	where $\calR^\mathrm{sim}(X)$ denotes a 
	variant of the Rips chain complex, 
	having the same persistent homology.
	These comparison maps are a multiplicative version of an interleaving,
	and although the constant appearing here is the degree $k$ in the chain
	complex, and in particular is not constant,
	it is nevertheless sufficient for Otter to prove the 
	following theorem.
\end{remark}

\begin{theorem}[{Otter~\cite[Theorem~32]{Otter}}]
\label{theorem-nina}
	\[
		\lim_{0\leftarrow\epsilon}\BMH_\ast(X)(\epsilon)
		\cong
		\lim_{0\leftarrow\epsilon}H_\ast(\calR(X)(\epsilon))
	\]
\end{theorem}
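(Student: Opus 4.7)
The strategy, which follows the approach sketched in the remark preceding the statement, is to build explicit chain-level comparison maps between $\BMC_\ast(X)$ and the variant $C_\ast(\calR^\mathrm{sim}(X))$ of the Rips chain complex (having the same persistent homology as $\calR(X)$), show that they form a degree-wise multiplicative interleaving of persistence modules, and then observe that the multiplicative scale factor disappears when passing to the limit at zero.

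For the first step I would define, in each degree $k$, a chain map $\phi_k\colon\BMC_k(X)(s)\to C_k(\calR^\mathrm{sim}(X)(s))$ sending a generator $(x_0,\ldots,x_k)$ with $\ell(x_0,\ldots,x_k)\leq s$ to the ordered simplex $[x_0,\ldots,x_k]$; this is well-defined since the triangle inequality gives $\diam\{x_0,\ldots,x_k\}\leq\ell(x_0,\ldots,x_k)\leq s$. In the opposite direction, after fixing a total order on $X$, I would define $\psi_k\colon C_k(\calR^\mathrm{sim}(X)(s))\to\BMC_k(X)(ks)$ by sending an ordered simplex with $\diam\leq s$ to the corresponding tuple, whose length is bounded by $k\cdot\diam\leq ks$ via iterated triangle inequality. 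Both maps commute with the differentials since both complexes use alternating sums of face maps with the usual normalisation for repeated adjacent vertices. I would then verify, via an acyclic carrier argument, that the composites $\psi_k\circ\phi_k$ and $\phi_k\circ\psi_k$ are chain homotopic to the persistence structure maps at scale factor $k$, both composites being carried by contractible subcomplexes that also carry the structure maps.

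With this degree-wise multiplicative interleaving in hand, the final step is to pass to the limit at $\epsilon=0$. For each fixed $k$ the rescaling $\epsilon\mapsto k\epsilon$ preserves the property of tending to zero, so both persistence modules stabilise to the same finite-dimensional value once $\epsilon$ is smaller than the minimal positive distance in $X$, and the induced maps become mutually inverse in the limit, yielding the required isomorphism. The main obstacle is the middle step: the interleaving constant grows with the chain degree, so the comparison is not a standard interleaving of persistence modules, and one has to choose the acyclic carriers (equivalently, the chain homotopies) compatibly enough across all degrees that the multiplicative rescaling is absorbed uniformly when the limit at zero is taken.
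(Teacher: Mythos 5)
Two things to note. First, the paper does not actually prove this statement: it is quoted verbatim from Otter (Theorem~32 of~\cite{Otter}), and the remark preceding it only sketches the comparison maps $\BMC_k(X)(s)\to C_k(\calR^\mathrm{sim}(X)(s))$ and $C_k(\calR^\mathrm{sim}(X)(s))\to\BMC_k(X)(ks)$. Your strategy follows that sketch, but as written it has a genuine gap in the final step. Your limit argument appeals to both modules stabilising ``once $\epsilon$ is smaller than the minimal positive distance in $X$'', which presupposes that $X$ is finite (or uniformly discrete). In that case the theorem is trivial (for small $\epsilon$ both sides reduce to the free module on the points of $X$ in degree $0$), whereas the content of Otter's theorem is precisely for general metric spaces such as compact Riemannian manifolds, where the right-hand side is Vietoris homology and nothing stabilises as $\epsilon\to 0$. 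The correct conclusion does not need stabilisation at all: fix the homological degree $k$, so the multiplicative factor is a fixed constant; then the two composites of the comparison maps are the structure maps of the two inverse systems indexed by $\epsilon\to 0$, the reindexing $\epsilon\mapsto k\epsilon$ is cofinal, and the induced maps on the inverse limits are mutually inverse. You state the cofinality observation but then replace it by the finiteness claim, which is the step that fails in the intended generality.

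Second, the middle step you flag as ``the main obstacle'' is also not resolved, and in fact it can be dissolved rather than solved. If one uses the ordered-tuple (semi-simplicial) model $\calR^\mathrm{sim}$, as Otter does, then a generator $(x_0,\ldots,x_k)$ of $\BMC_k(X)(s)$ is literally an ordered $k$-simplex of diameter $\leq s$, the map $\phi_k$ is an inclusion of chain complexes, $\psi_k$ is likewise the evident inclusion (modulo degenerate tuples, which die in normalised chains), and both composites are \emph{equal} to the structure maps --- no acyclic carriers or chain homotopies are needed, and the issue of choosing homotopies compatibly across degrees never arises. By contrast, your choice of a fixed total order on $X$ suggests mapping into the unordered simplicial-complex model, and there $\phi_k$ is not even well defined as stated: a generator such as $(x,y,x)$ with $x\neq y$ has underlying vertex set of size two, so it spans no $2$-simplex, and handling such tuples (they must be sent to zero or to degenerate elements) is exactly where the unproved chain-homotopy claims would have to do real work. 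So the proposal is the right outline of Otter's argument, but the two steps that carry the actual content --- the identification of the composites with structure maps and the passage to the inverse limit for non-discrete $X$ --- are the ones left incomplete or argued incorrectly.
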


The quantity $\lim_{0\leftarrow\epsilon}H_\ast(\calR_\ast(X)(\epsilon))$
is the \emph{Vietoris homology} of $X$, a version of homology
developed for metric spaces. In good cases, e.g.~when $X$ is a
compact Riemannian manifold, it coincides with the singular homology
of $X$.  This theorem therefore demonstrates for the first time
a concrete connection between magnitude homology and ordinary
homology of spaces.

We will now state a new result that gives the relation between magnitude
and the barcode decomposition of the blurred magnitude homology.
First note that by standard homological algebra, the definitions stated above
have the following immediate consequence, which relates ordinary and blurred
magnitude homology. 

\begin{proposition} \label{proposition-absolute-blurred}
	Let $X$ be a finite metric space and let $0=l_0<l_1<l_2<\cdots$
	be the distinct real numbers occuring as 
	$\ell(x_0,x_1,\ldots,x_k)$ for $x_0,\ldots,x_k\in X$,
	$k\geq 0$.
	Then for each $k\geq 0$ and $j>0$ we have a short exact sequence:
	\[
	0\to\BMC_k(X)(l_{j-1})\to\BMC_{k}(X)(l_j)\to\MC_{k,l_j}(X)\to0
	\]
	Consequently in homology there is a long exact sequence:
	\[
	\cdots\to\BMH_k(X)(l_{j-1})\to\BMH_k(X)(l_j)\to\MH_{k,l_j}(X)\to\cdots
	\]
\end{proposition}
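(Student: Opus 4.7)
The plan is to establish the short exact sequence at the chain level by a direct identification of bases, and then invoke the standard long exact sequence of homology associated to a short exact sequence of chain complexes.

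First, I would unwind the definitions: $\BMC_k(X)(l)$ is the free abelian group on tuples $(x_0,\ldots,x_k)$ with $x_0\neq x_1\neq\cdots\neq x_k$ and $\ell(x_0,\ldots,x_k)\leq l$, while $\MC_{k,l_j}(X)$ is the free abelian group on such tuples with $\ell$ \emph{exactly} equal to $l_j$. The inclusion $\BMC_k(X)(l_{j-1})\hookrightarrow \BMC_k(X)(l_j)$ is the structure map of the persistence module, which is clearly injective on generators. Because $l_0<l_1<l_2<\cdots$ enumerates \emph{all} $\ell$-values (and the set of $\ell$-values is discrete, as noted in Remark~\ref{remark-l-values}), the condition $\ell\leq l_{j-1}$ is equivalent to $\ell<l_j$. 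Hence the quotient $\BMC_k(X)(l_j)/\BMC_k(X)(l_{j-1})$ is freely generated by tuples with $\ell=l_j$, giving a canonical $\Z$-module isomorphism with $\MC_{k,l_j}(X)$.

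Next I would verify that this isomorphism is compatible with the boundary operators, so that the short exact sequence is one of chain complexes rather than merely of graded abelian groups. The boundary of a generator $(x_0,\ldots,x_k)$ with $\ell=l_j$ in $\BMC_k(X)(l_j)$ is $\sum(-1)^i(x_0,\ldots,\widehat{x_i},\ldots,x_k)$, with every face included. By the triangle inequality, removing a vertex can only decrease $\ell$, so each face satisfies $\ell\leq l_j$. In the quotient by $\BMC_k(X)(l_{j-1})$, faces with $\ell<l_j$ are killed, leaving exactly those faces with $\ell=l_j$ — precisely the terms retained in the definition of $\partial$ on $\MC_{k,l_j}(X)$. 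So the projection commutes with $\partial$, and we have the asserted short exact sequence
\[
0\to\BMC_k(X)(l_{j-1})\to\BMC_k(X)(l_j)\to\MC_{k,l_j}(X)\to0
\]
of chain complexes.

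Finally, I would apply the standard long exact sequence of homology associated to a short exact sequence of chain complexes to conclude the long exact sequence relating $\BMH_k(X)(l_{j-1})$, $\BMH_k(X)(l_j)$, and $\MH_{k,l_j}(X)$. I do not anticipate a real obstacle; the only subtlety is the bookkeeping observation that the discreteness of $\ell$-values makes $\ell\leq l_{j-1}$ and $\ell<l_j$ equivalent, which is what makes the middle quotient land on the nose on $\MC_{k,l_j}(X)$ rather than on some coarser filtration quotient. A minor technical point to be careful about is the convention that tuples with two consecutive equal entries represent the zero element, so that face terms of this form in the boundary of a generator vanish in both $\BMC$ and $\MC$ and need no separate treatment.
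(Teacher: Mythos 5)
Your proof is correct and follows exactly the route the paper intends: the paper presents this as an immediate consequence of the definitions (the filtration quotient $\BMC_k(X)(l_j)/\BMC_k(X)(l_{j-1})$ is freely generated by tuples with $\ell=l_j$ and the induced boundary agrees with that of $\MC_{\ast,l_j}(X)$) together with the standard long exact sequence of a short exact sequence of chain complexes. You have simply spelled out the details, including the correct handling of degenerate faces and the discreteness of the $\ell$-values.
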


Our result can then be stated as follows. Its proof is rather long
and technical, thanks to convergence issues.

\begin{theorem}\label{theorem-magBMH}
	Let $X$ be a finite metric space and let $\BMH_\ast(X)$ denote
	its blurred magnitude homology.  Suppose that $\BMH_\ast(X)$
	has barcode whose bars in degree $k\geq 0$
	are $[a_{k,0},b_{k,0}),[a_{k,1},b_{k,1}),\ldots$.
	Then the magnitude of $X$ is given by the formula
	\[
		|tX| 
		= 
		\sum_{k=0}^\infty
		\sum_{i=1}^{m_k}
			(-1)^k (e^{-a_{k,i}t} - e^{-b_{k,i}t})
	\]
	for $t$ sufficiently large.
\end{theorem}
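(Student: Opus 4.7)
My strategy is to start from the formula in Proposition~\ref{proposition-alternating}, which expresses $|tX|$ as an alternating rank sum over the non-blurred magnitude homology groups $\MH_{k,l_j}(X)$, and convert it into a sum over the barcode of $\BMH_\ast(X)$ using the long exact sequence of Proposition~\ref{proposition-absolute-blurred}. The guiding idea is that each rank $\rank\MH_{k,l_j}(X)$ measures precisely the change in $\BMH_\ast(X)$ between the consecutive levels $l_{j-1}$ and $l_j$, and in barcode language this change splits cleanly into new bars born in degree $k$ at $l_j$ plus bars in degree $k-1$ dying at $l_j$.

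First I would observe that $\BMC_\ast(X)$ is constant on each half-open interval $[l_{j-1},l_j)$, hence so is $\BMH_\ast(X)$, and therefore every birth point $a_{k,i}$ and every finite death point $b_{k,i}$ of a bar must lie in the set $\{l_0,l_1,l_2,\ldots\}$ of Remark~\ref{remark-l-values}. Writing $\phi_k^{(j)}\colon \BMH_k(X)(l_{j-1})\to\BMH_k(X)(l_j)$ for the structure map, the long exact sequence of Proposition~\ref{proposition-absolute-blurred} yields
\[
\rank\MH_{k,l_j}(X) \;=\; \rank\bigl(\operatorname{coker}\phi_k^{(j)}\bigr) \;+\; \rank\bigl(\ker\phi_{k-1}^{(j)}\bigr).
\]
Interpreting each side via the barcode, $\rank(\operatorname{coker}\phi_k^{(j)}) = \#\{i:a_{k,i}=l_j\}$ counts the bars in degree $k$ born at $l_j$, while $\rank(\ker\phi_{k-1}^{(j)}) = \#\{i:b_{k-1,i}=l_j\}$ counts the bars in degree $k-1$ that die at $l_j$. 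A separate direct check with the convention $\BMH_k(X)(l_{-1})=0$ handles $j=0$ and accounts for the bars born at $l_0=0$.

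Substituting these counts into Proposition~\ref{proposition-alternating} and regrouping contributions bar-by-bar splits the double sum into two pieces: the births yield $\sum_{k,i}(-1)^k e^{-a_{k,i}t}$, while the deaths, after reindexing $k\mapsto k+1$ and using that bars with $b_{k,i}=\infty$ contribute $0$, yield $-\sum_{k,i}(-1)^k e^{-b_{k,i}t}$. Combining these gives the desired formula.

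The main obstacle will be justifying the rearrangement of the double sum, since the total number of bars is typically infinite and the outer series in Proposition~\ref{proposition-alternating} is only conditionally convergent on its face. The plan is to establish absolute convergence for $t$ sufficiently large. Using the crude bound $\dim\BMC_k(X)(l_j)\leq(\#X)^{k+1}$ together with the inequality $k\leq l_j/\delta$ from equation~\eqref{equation-ell-bound}, one bounds $\sum_k \rank\MH_{k,l_j}(X)$ by a quantity exponential in $l_j$; so once $t>(\log\#X)/\delta$, the weights $e^{-l_j t}$ dominate, every sum in sight converges absolutely, and all reorderings and reindexings are legitimate.
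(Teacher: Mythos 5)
Your argument is correct, and it reaches the formula by a route that is recognisably parallel to, but mechanically different from, the paper's. The paper also starts from Proposition~\ref{proposition-alternating} and the long exact sequence of Proposition~\ref{proposition-absolute-blurred}, but it only uses the coarser, alternating-sum consequence $\sum_k(-1)^k\rank\MH_{k,l_j}(X)=\sum_k(-1)^k\bigl[\rank\BMH_k(X)(l_j)-\rank\BMH_k(X)(l_{j-1})\bigr]$ (which needs the long exact sequence to terminate, i.e.\ the vanishing of all groups once $k>l_j/\delta$), then exchanges the order of summation, telescopes in $j$ for each fixed $k$ (which requires the additional limit $\rank\BMH_k(X)(l_j)e^{-l_jt}\to0$), and finally rewrites $\rank\BMH_k(X)(l_j)$ as the number of bars containing $l_j$. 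You instead extract from the long exact sequence the local short exact sequence $0\to\operatorname{coker}\phi_k^{(j)}\to\MH_{k,l_j}(X)\to\ker\phi_{k-1}^{(j)}\to0$ at each $(k,j)$, identify the two outer ranks with births in degree $k$ and deaths in degree $k-1$ at $l_j$ (legitimate because $\BMH_\ast(X)$ is constant on $[l_{j-1},l_j)$, so all bar endpoints are $\ell$-values), and then regroup the absolutely convergent double series bar by bar. Your route buys you freedom from both the termination argument and the telescoping/limit step, at the modest price of the coker/ker-to-barcode dictionary; both routes rest on the same absolute-convergence estimate for large $t$. One small point to tighten: your convergence sketch bounds $\sum_k\rank\MH_{k,l_j}(X)$ for each fixed $l_j$ by a quantity exponential in $l_j$, but to sum over $j$ you must also control how many $\ell$-values lie below a given threshold (their spacing is not uniform); the cleanest fix is the paper's device of dominating the whole double sum by the sum over tuples, $\sum_k\sum_{(x_0,\ldots,x_k)}e^{-\ell(x_0,\ldots,x_k)t}\le\sum_k(\#X)^{k+1}e^{-k\delta t}$, which converges as soon as $\#X\,e^{-\delta t}<1$ and makes every rearrangement you perform legitimate.
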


\begin{proof}
Throughout this proof we let 
$\delta$ denote the minimum nonzero distance
between elements of $X$, and we let $n$ denote the
cardinality of $X$.
We let $l_0<l_1<l_2<\cdots$ be the distinct values of $l$ for which
the inner sum appearing in Proposition~\ref{proposition-alternating}
is nonzero, as in Remark~\ref{remark-l-values}.  
And we define $D(i,j,k)$ to be $1$ if $l_j$ is in $[a_{k,i},b_{k,i})$,
and to be $0$ otherwise. 
We make a standing assumption that $t$ is large enough that
$ne^{-\delta t}<1$; this is the assumption under which
$t$ is large enough that the conclusions of 
Proposition~\ref{proposition-alternating} hold.

We will use the following fact several times.
Let $(x_0,\ldots,x_k)$ be a tuple of elements of $X$
in which consecutive elements are distinct,
and suppose that this tuple is a generator of 
$\MC_{k,l}(X)$ or $\BMC_k(X)(l)$.
Then $\ell(x_0,\ldots,x_k)\leq l$,
while $\ell(x_0,\ldots,x_k)\geq k\delta$,
so that $k\delta \leq l$.  
It follows that, if $k$ and $l$ do not satisfy this relation,
then the homology groups $\MH_{k,l}(X)$ and $\BMH_k(X)(l)$ vanish.

We now have the following computation, whose steps
will be justified below.
\begin{align*}
	|tX|
	&\stackrel{1}{=}
	\sum_{j=0}^\infty
	\sum_{k=0}^\infty
	(-1)^k\rank(\MH_{k,l_j}(X))e^{-l_jt}
	\\
	&\stackrel{2}{=}
	\sum_{j=0}^\infty
	\sum_{k=0}^\infty
	(-1)^k
	\left[
		\rank(\BMH_{k}(X)(l_j))
		-
		\rank(\BMH_{k}(X)(l_{j-1}))
	\right]e^{-l_jt}
	\\
	&\stackrel{3}{=}
	\sum_{k=0}^\infty
	(-1)^k
	\sum_{j=0}^\infty
	\left[
		\rank(\BMH_{k}(X)(l_j))
		-
		\rank(\BMH_{k}(X)(l_{j-1}))
	\right]e^{-l_jt}
	\\
	&\stackrel{4}{=}
	\sum_{k=0}^\infty
	(-1)^k
	\sum_{j=0}^\infty
	\rank(\BMH_{k}(X)(l_j))(e^{-l_jt}-e^{-l_{j+1}t})
	\\
	&\stackrel{5}{=}
	\sum_{k=0}^\infty
	(-1)^k
	\sum_{j=0}^\infty
	\sum_{i=0}^\infty
	D(i,j,k)(e^{-l_jt}-e^{-l_{j+1}t})
	\\
	&\stackrel{6}{=}
	\sum_{k=0}^\infty
	(-1)^k
	\sum_{i=0}^\infty
	\sum_{j=0}^\infty
	D(i,j,k)(e^{-l_jt}-e^{-l_{j+1}t})
	\\
	&\stackrel{7}{=}
	\sum_{k=0}^\infty
	(-1)^k
	\sum_{i=0}^\infty
	(e^{-a_{k,i}t}-e^{-b_{k,i}t})
\end{align*}

Step 1 is precisely the formula of 
Proposition~\ref{proposition-alternating}.
The series here is absolutely convergent.
That is because 
\begin{align*}
	\sum_{j=0}^J
	\sum_{k=0}^\infty 
	\rank(\MH_{k,l_j}(X))
	e^{-l_jt}
	&=
	\sum_{k=0}^\infty 
	\sum_{j=0}^J
	\rank(\MH_{k,l_j}(X))
	e^{-l_jt}
	\\
	&\leq
	\sum_{k=0}^\infty 
	\sum_{x_0,\ldots,x_k}
	e^{-\ell(x_0,\ldots,x_k)t}
	\\
	&\leq
	\sum_{k=0}^\infty
	n^{k+1}e^{-k\delta t}
	\\
	&=
	n\cdot\sum_{k=1}^\infty
	(ne^{-\delta t})^k.
\end{align*}
Here, in the second line the inner sum is over all tuples
$(x_0,\ldots,x_k)$ with consecutive elements distinct,
and there are at most $n^{k+1}$ of these, where $n$
denotes the cardinality of $X$.
Now we have
$ne^{-\delta t}<1$ by our standing assumption, 
so that the latter sum converges and is bounded above
independent of $J$.  This shows absolute convergence.

To explain step 2,
%note from the definitions
%that there is a short exact sequence
%\[
	%0
	%\to
	%\BMC_k(X)(l_{j-1})
	%\to 
	%\BMC_{k}(X)(l_j)
	%\to
	%\MC_{k,l_j}(X) 
	%\to
	%0
%\]
%and consequently
recall that by Proposition \ref{proposition-absolute-blurred} there is a long exact sequence
\[
	\cdots
	\to
	\BMH_k(X)(l_{j-1})
	\to
	\BMH_k(X)(l_j)
	\to
	\MH_{k,l_j}(X) 
	\to
	\cdots
\]
This sequence terminates in both directions,
because the relation described in the second
paragraph above fails for all three groups when $k$ is large enough.
A standard fact from homological algebra then guarantees that 
\begin{multline*}
	\sum_{k=0}^\infty
	(-1)^k\rank(\MH_{k,l_j}(X))
	=
	\\
	\sum_{k=0}^\infty
	(-1)^k
	\left[\rank(\BMH_{k}(X)(l_j))
	-
	\rank(\BMH_{k}(X))(l_{j-1})
	\right].
\end{multline*}

For step 3, we have exchanged the order of summation.
This is valid because the series is absolutely convergent
(indeed, it is the same series as the one appearing in step 1).

For step 4, we have `telescoped' the sum, using the fact that
\[
	\rank(\BMH_k(X)(l_j))e^{-l_jt}\to 0
	\text{ as }
	j\to\infty.
\]
The latter holds because $\rank(\BMH_k(X)(l_j))$ is at most the number
of tuples $(x_0,\ldots,x_k)$ with consecutive entries distinct
and $\ell(x_0,\ldots,x_k)\le l_j$.  But then $l_j\ge k\delta$
so that $\rank(\BMH_{k}(X)(l_j))e^{-l_jt}\leq n^{k+1}e^{-l_jt}
\le n\cdot n^{l_j/\delta}e^{-l_j t}=n\cdot(n^{1/\delta}e^{-t})^{l_j}$.
But $(n^{1/\delta}e^{-t})<1$ by our standing assumption.
Since $l_j\to\infty$ as $j\to\infty$, the claim follows.

Step 5 follows by simply describing $\rank(\BMH_k(X)(l_j))$
as the number of bars in the barcode decomposition for
$\BMH_k(X)$ that contain $l_j$.
For step 6 we have again exchanged the order of summation,
which is valid because the series consists of non-negative numbers
and is convergent.
Step 7 is then a direct computation of the series
$\sum_{j=0}^\infty D(i,j,k)(e^{-l_jt}-e^{-l_{j+1}t})$.
\end{proof}

\section{Magnitude of persistence modules}
\label{section-persistent-magnitude}

In Theorem~\ref{theorem-magBMH} in the previous section, 
we saw a formula
expressing the magnitude function of a finite metric space $X$
in terms of the barcode decomposition of its blurred magnitude
homology.  In this section we will turn that result on its
head, and use the formula to \emph{define} a numerical invariant
of persistence modules and  graded persistence modules,
and explore its formal properties.

In subsequent sections we will apply our new invariant
to persistent homology groups, in order to obtain new
invariants of finite metric spaces (or of whatever input
the persistent homology theory accepts).

In this section we will usually work with finitely presented 
persistence modules, and finitely presented graded persistence modules.
In the latter case, we mean that the graded persistence module
has finitely many generators and relations in total, 
so that it is nonzero in only finitely many degrees.
Thus, our persistence modules will always be direct sums of finitely
many interval modules of the form $\kk[a,b)$ where possibly $b=\infty$.
This restriction allows us to work with the most relevant examples
such as Rips and \v Cech complexes while keeping technicalities to
a minimum.  At the end of the section we will offer two different
perspectives on the persistent magnitude, 
via the derived associated graded module
and the Laplace transform.  These offer
potential for extending the scope of persistent magnitude
beyond the present setting.

\subsection{Persistent magnitude}

\begin{definition}[Persistent magnitude]\label{definition-mag}
	Let $M$ be a finitely presented persistence module 
	with barcode decomposition 
	\[
		M
		\cong
		\bigoplus_{i=1}^n
		\kk[a_i,b_i).
	\]
	The \emph{persistent magnitude} 
	or simply \emph{magnitude} of $M$ is the real number
	\[
		|M|
		=
		\sum_{i=1}^n (e^{-a_i}- e^{-b_i})
	\]
	where by convention $e^{-\infty}=0$.
\end{definition}

\begin{example}\label{example-interval}
	For interval modules we have
	$|\kk[a,b)|=e^{-a}-e^{-b}$
	and
	$|\kk[a,\infty)| = e^{-a}$.
	Thus longer intervals have greater magnitude.
	This is in line with one of the general philosophies of persistent
	homology, that longer bars --- the features that persist longer ---
	are the more significant, while the shorter bars represent `noise'.
	(But note that there are other interpretations of barcodes, 
	especially of the shorter bars.
	For instance, it is shown
	in \cite{BHPWcurvature} that the short bars 
	in the barcode of points sampled from a disk of constant curvature
	can be used to infer the curvature.)
  Note also that interval modules of fixed length
	have greater magnitude the closer they are to $0$,
	i.e.~the sooner they begin.
\end{example}

\begin{proposition}[Additivity with respect to short exact sequences]
\label{proposition-ses}
	If
	\[
		0\to M\to N\to P\to 0
	\]
	is a short exact sequence
	of finitely presented persistence modules,
	then $|N| = |M| + |P|$.
\end{proposition}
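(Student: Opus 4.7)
The plan is to rewrite the persistent magnitude as a single integral against the measure $e^{-s}\,ds$, which immediately reduces additivity to the pointwise exactness of the given sequence.

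The key reformulation is the following. For any finitely presented persistence module $Q$ with barcode decomposition $Q \cong \bigoplus_{i=1}^m \kk[a_i,b_i)$, the dimension function decomposes as a finite sum of indicator functions, $\dim Q(s) = \sum_{i=1}^m \mathbbm{1}_{[a_i,b_i)}(s)$. Since $e^{-a} - e^{-b} = \int_a^b e^{-s}\,ds$ (which remains valid when $b=\infty$ under the convention $e^{-\infty}=0$), I can exchange the finite sum with the integral to obtain
\begin{equation*}
|Q| \;=\; \sum_{i=1}^m (e^{-a_i} - e^{-b_i}) \;=\; \sum_{i=1}^m \int_{a_i}^{b_i} e^{-s}\,ds \;=\; \int_{\R} \dim Q(s)\, e^{-s}\,ds.
\end{equation*}
The integral is finite because there are only finitely many bars and each left endpoint $a_i$ is finite, so $\dim Q(s)\,e^{-s}$ is bounded, compactly supported on the left, and decays like $e^{-s}$ on the right.

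Next, because $\persmod$ is an abelian category with kernels and cokernels computed object-wise (as noted in the Remark following the definition of persistence module), the short exact sequence $0 \to M \to N \to P \to 0$ restricts for each $s \in \R$ to a short exact sequence of finite-dimensional vector spaces $0 \to M(s) \to N(s) \to P(s) \to 0$. Rank-nullity then gives $\dim N(s) = \dim M(s) + \dim P(s)$ for every $s \in \R$. Multiplying this identity by $e^{-s}$, integrating over $\R$, and applying the formula of the previous paragraph to each of $M$, $N$, $P$ yields $|N| = |M| + |P|$.

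There is no serious obstacle: once the integral representation is in hand, additivity reduces to the linearity of Lebesgue integration. The only points to check carefully are the convention $e^{-\infty}=0$ for bars of the form $[a,\infty)$, and the fact that $N$ is finitely presented (and hence has a well-defined magnitude) whenever $M$ and $P$ are, which follows from the structure theorem applied to $N$, since an extension of finite direct sums of interval modules is again such a sum.
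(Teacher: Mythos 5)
Your proof is correct, and it is in substance the second of the paper's two proofs. The paper's primary written-out proof goes through the derived associated graded functors: it uses the free resolutions $0\to\kk[b,\infty)\to\kk[a,\infty)\to\kk[a,b)\to0$ to compute $\Gr_0$ and $\Gr_1$ of interval modules, establishes $|M|=|\Gr_0(M)|-|\Gr_1(M)|$, and then applies the six-term long exact sequence in $\Gr_\ast$ together with additivity of the graded magnitude. The paper's alternative proof, via the Laplace transform, rests on exactly the observation you use: a short exact sequence gives $\rank(N)(s)=\rank(M)(s)+\rank(P)(s)$ pointwise, because kernels and cokernels are computed object-wise; there the magnitude is written as $\mathcal{L}\{\rank'(M)\}(t)$, the transform of the distributional derivative of the rank function, whereas you integrate the rank function itself against $e^{-s}\,ds$, which is the same identity after an integration by parts and avoids distributions altogether (note that for the magnitude \emph{function} one would get $|tQ| = t\int_\R \rank(Q)(s)e^{-st}\,ds$, with an extra factor of $t$; since the proposition only concerns $t=1$, your formula is exactly what is needed). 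One small remark: your closing comment about $N$ being finitely presented whenever $M$ and $P$ are is unnecessary, since the hypothesis already assumes all three modules are finitely presented; as stated it is also slightly quick, because Crawley-Boevey's theorem only gives a decomposition into intervals, and one would still need to argue that these are finitely many half-open intervals $[a,b)$. This does not affect the validity of your proof of the proposition as stated.
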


We will give a proof of this proposition in 
section~\ref{subsection-associated-graded} below,
and another proof in section~\ref{subsection-laplace}.

\begin{definition}[Persistent magnitude of graded persistence modules]
	Let $M_\ast$ be a finitely presented graded persistence module.
	The \emph{persistent magnitude} of $M_\ast$ is defined as follows:
	\begin{align*}
		|M_\ast|
		&=
		\sum_{i} (-1)^i|M_i|
	\end{align*}
\end{definition}

If $C_\ast$ is a chain complex of persistence modules, 
then we obtain two graded persistence modules,
namely $C_\ast$ itself,
and the homology $H_\ast(C)$.
The persistent magnitude of these is related by the following
result, whose proof is a standard consequence of 
additivity with respect to short exact sequences.
(Compare with the proof of Theorem~2.44 of~\cite{Hatcher}.)

\begin{proposition}\label{proposition-maghom}
	Let $C_\ast$ be a finitely presented 
	chain complex of persistence modules.
	Then 
	\[
		|H_\ast(C)| = |C_\ast|.
	\]
\end{proposition}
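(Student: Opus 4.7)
The plan is to imitate the standard proof that the Euler characteristic of a chain complex equals the Euler characteristic of its homology (as in Theorem~2.44 of~\cite{Hatcher}), replacing ranks of vector spaces with persistent magnitudes of persistence modules, and using additivity (Proposition~\ref{proposition-ses}) as the key input.

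Concretely, for each degree $n$ let $Z_n = \ker(\partial_n\colon C_n \to C_{n-1})$ and $B_n = \im(\partial_{n+1}\colon C_{n+1}\to C_n)$, both regarded as sub-persistence-modules of $C_n$. The first isomorphism theorem yields two short exact sequences of persistence modules,
\[
0 \to Z_n \to C_n \to B_{n-1} \to 0
\qquad\text{and}\qquad
0 \to B_n \to Z_n \to H_n(C) \to 0,
\]
where the first arises from $\partial_n\colon C_n \twoheadrightarrow B_{n-1}$ and the second from the definition of homology. Before invoking additivity, I need to know that $Z_n$, $B_n$, and $H_n(C)$ are finitely presented, so that their persistent magnitudes are defined. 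Since each $C_n$ is a finite direct sum of half-open interval modules $\kk[a,b)$, and the category of such finitely presented persistence modules is abelian (kernels, images, and cokernels of maps between finite direct sums of interval modules are again finite direct sums of interval modules), this finiteness is automatic.

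Now apply Proposition~\ref{proposition-ses} to the two short exact sequences to get $|C_n| = |Z_n| + |B_{n-1}|$ and $|Z_n| = |B_n| + |H_n(C)|$ for every $n$. Taking the alternating sum over $n$ and reindexing the boundary sum,
\[
|C_\ast| = \sum_n (-1)^n |C_n| = \sum_n (-1)^n \bigl(|Z_n| + |B_{n-1}|\bigr) = \sum_n (-1)^n \bigl(|Z_n| - |B_n|\bigr) = \sum_n (-1)^n |H_n(C)| = |H_\ast(C)|,
\]
where the middle equality uses $\sum_n (-1)^n |B_{n-1}| = -\sum_n (-1)^n |B_n|$. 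The finite presentation hypothesis on $C_\ast$ ensures that only finitely many terms in each sum are nonzero, so all rearrangements are valid.

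The only possible obstacle is verifying that the cycle, boundary, and homology persistence modules are finitely presented, but this is a routine consequence of working inside the abelian subcategory of finite direct sums of half-open interval modules $\kk[a,b)$; once that is in hand the rest is a purely formal telescoping calculation.
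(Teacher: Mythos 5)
Your proof is correct and follows essentially the same route as the paper, which simply invokes the standard Euler-characteristic argument (compare Theorem~2.44 of Hatcher) built on additivity for short exact sequences, exactly via the cycle/boundary exact sequences you write down. Your added remark that kernels, images, and homology of maps between finite direct sums of $\kk[a,b)$ modules remain finitely presented is the same routine point the paper leaves implicit, so there is no substantive difference.
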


\subsection{Rescaling and the magnitude function}

In Definition~\ref{definition-magnitude-function} the magnitude
of a finite metric space was extended from a number to a function
by means of rescaling the metric space.  We now do the same with
persistent magnitude.

\begin{definition}[Rescaling of persistence modules]
	Given a persistence module $M$ and a real number $t\in(0,\infty)$,
	we can define the \emph{rescaled module} $tM$ to be the new persistence
	module defined by
	\[
		tM (s) = M(s/t)
	\]
	for $s\in[0,\infty)$.
	More precisely, $tM$ is obtained from $M$ by precomposing
	with the functor from $[0,\infty)$ to itself
	that sends $s$ to $s/t$.
	This operation extends to graded persistence modules and chain
	complexes of persistence modules in the evident way.
\end{definition}

One can think of the definition of this rescaling operation as saying that 
features of $M$ that occur at $s$ become
features of $tM$ that occur at $ts$.

\begin{example}[Rescaling intervals and barcodes]
	One can check that $t\kk[a,b) = \kk[ta,tb)$.
	Thus the effect of the rescaling operation on the barcode
	of a finitely presented persistence module $M$
	is to simply rescale it by $t$: the barcode of $tM$ is obtained
	from that of $M$ by applying a scale factor of $t$ in the
	horizontal direction.
\end{example}

\begin{example}[Rescaling of metric spaces]
	Rescaling of persistence modules is designed to interact nicely
	with rescaling of metric spaces.  Recall that if $X$ is a metric
	space and $t\in (0,\infty)$, then the rescaling $tX$ is the metric space
	with the same underlying set and with metric defined by
	$d_{tX}(a,b) = t d_X(a,b)$.
	Then one can check directly that the Rips and \v Cech complexes
	satisfy
	\[
		tC_\ast(\calR(X)) = C_\ast\calR(tX)
		\quad\text{and}\quad
		tC_\ast(\calCh(X)) = C_\ast(\calCh(tX))
	\]
	and similarly for the persistent homology.
\end{example}

\begin{definition}[The persistent magnitude function]
	\label{definition-persistent-magnitude-function}
	The \emph{persistent magnitude function} 
	or simply \emph{magnitude function} of a finitely
	presented persistence module $M$ is the function 
	$(0,\infty)\to\R$ defined by
	\[
		t\longmapsto |tM|.
	\]
	If $M$ has direct sum decomposition 
	$M \cong \bigoplus_{i=1}^n \kk[a_i,b_i)$,
	then the magnitude function is given by the formula 
	\[
		|tM|
		=
		\sum_{i=1}^n (e^{-a_it}- e^{-b_it})
	\]
	where again by convention $e^{-\infty}=0$.
	\end{definition}

The extremal behaviour of the magnitude function
singles out two special classes of bars,
as we see in the next proposition.  Its proof is an
immediate consequence of the definitions.

\begin{proposition}\label{proposition-limiting}
	Let $M$ be a finitely presented persistence module.
	Then:
	\begin{itemize}
		\item
		$\lim_{t\to 0}|tM|$
		is the number of bars in $M$ of the form $\kk[a,\infty)$.
		\item
		if all bars of $M$ are contained in $[0,\infty)$ (or equivalently
		if $M(s)=0$ for $s<0$), then $\lim_{t\to\infty}|tM|$
		is the number of bars in $M$ of the form $\kk[0,b)$, 
			including the case $b=\infty$.
		\end{itemize}
\end{proposition}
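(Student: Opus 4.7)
The plan is to apply the explicit formula from Definition~\ref{definition-persistent-magnitude-function}, namely
\[
    |tM| = \sum_{i=1}^n (e^{-a_i t} - e^{-b_i t}),
\]
and then evaluate the two limits term-by-term. Since $M$ is finitely presented, the sum is finite, so limits commute with the summation without any convergence subtleties. The main point is simply to classify which bars contribute a nonzero limit in each regime.

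For the first statement, fix a bar $\kk[a_i,b_i)$. If $b_i < \infty$, then as $t \to 0^{+}$ both $e^{-a_i t}$ and $e^{-b_i t}$ tend to $1$, so the contribution $e^{-a_i t} - e^{-b_i t}$ tends to $0$. If instead $b_i = \infty$, then $e^{-b_i t} = 0$ by convention while $e^{-a_i t} \to 1$, so the contribution tends to $1$. Summing over $i$, the limit $\lim_{t\to 0}|tM|$ equals the number of bars of the form $\kk[a,\infty)$, as required.

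For the second statement, assume all bars lie in $[0,\infty)$, i.e.\ $a_i \geq 0$ for every $i$. Again examine a fixed bar. If $a_i > 0$, then $e^{-a_i t} \to 0$ as $t \to \infty$ and also $e^{-b_i t} \to 0$ (using the convention $e^{-\infty} = 0$ if $b_i = \infty$, and otherwise because $b_i > a_i > 0$), so the contribution vanishes. If $a_i = 0$, then $e^{-a_i t} = 1$ for all $t$, while $e^{-b_i t} \to 0$ (whether $b_i$ is finite and positive, or $b_i = \infty$), so the contribution tends to $1$. Thus the limit $\lim_{t\to\infty}|tM|$ equals the number of bars of the form $\kk[0,b)$, including $b = \infty$.

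There is no real obstacle: the argument is simply a finite sum of four elementary limits of exponentials, and the only mild observation is that the convention $e^{-\infty}=0$ handles the infinite bar case uniformly in both regimes. The equivalence "all bars contained in $[0,\infty)$" $\Leftrightarrow$ "$M(s)=0$ for $s<0$" used in the hypothesis follows directly from the barcode decomposition, since $\kk[a_i,b_i)(s)=0$ for $s<0$ iff $a_i \geq 0$.
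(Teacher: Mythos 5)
Your proof is correct and is exactly the argument the paper has in mind: the paper simply remarks that the proposition is an immediate consequence of the definitions, i.e.\ of the finite-sum formula $|tM|=\sum_{i}(e^{-a_i t}-e^{-b_i t})$, and your term-by-term evaluation of the limits (with the convention $e^{-\infty}=0$) spells out precisely that observation.
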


One can think of this as follows:
As $t\to 0$, we are scaling down the barcode of $M$, so that any
finite bars eventually disappear at $0$, while any infinite bars
remain, but all become indistinguishable; in this limit
the magnitude function simply counts the latter.
As $t\to \infty$, we are scaling up the barcode of $M$,
so that any bars which begin \emph{after} $0$ eventually
disappear at infinity, while all bars that begin at $0$
remain but become indistinguishable; in this limit
the magnitude function again just counts the latter.

\begin{remark}[Reparameterisation of persistence modules]\label{remark-homeo}
Occasionally, it is useful to reparameterise a persistence module $M$ by an orientation preserving homeomorphism $h\colon\R\to\R$. In this case, we can define the {\em reparameterised module} $hM$ by:
\[
hM(s)=M(h^{-1}(s))
\]
The properties of this definition generalise the ones for rescaling by a positive real number in a natural way. For instance,
\[
h\kk[a,b)=\kk[h(a),h(b))
\]
and if $M\cong\bigoplus_{i=1}^n\kk[a_i,b_i)$ we have
\[
|hM|=\sum_{i=1}^n (e^{-h(a_i)}- e^{-h(b_i)}).
\]
\end{remark}

This definition has the following immediate but useful consequence:

\begin{lemma}\label{lemma-homeo}
Suppose $M$ is a finitely presented persistence module with magnitude function
\[
|tM|=\sum_{i=1}^n \lambda_i e^{-r_i t}
\]
and $h\colon\R\to\R$ is an orientation preserving homeomorphism. Then the reparameterised module $hM$ has magnitude function
\[
|thM|=\sum_{i=1}^n \lambda_i e^{-h(r_i)t}.
\]
\end{lemma}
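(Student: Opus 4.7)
The plan is to combine the barcode decomposition of $M$ with the explicit description of $h\kk[a,b)$ in Remark~\ref{remark-homeo}. Since $M$ is finitely presented, write $M \cong \bigoplus_{j=1}^{N} \kk[a_j,b_j)$, so that by Definition~\ref{definition-persistent-magnitude-function}
\[
|tM| = \sum_{j=1}^{N} \bigl(e^{-a_j t} - e^{-b_j t}\bigr).
\]
After collecting like terms, this presents $|tM|$ in the form $\sum_{i=1}^{n} \lambda_i e^{-r_i t}$ with the $r_i$ the distinct endpoints in $\{a_j,b_j\}_{j=1}^N$ and each $\lambda_i$ the signed multiplicity of $r_i$ (i.e.\ the number of $a_j$ equal to $r_i$ minus the number of $b_j$ equal to $r_i$).

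Next, I would apply Remark~\ref{remark-homeo} termwise to obtain
\[
hM \cong \bigoplus_{j=1}^{N} \kk[h(a_j), h(b_j)),
\]
and hence, from the formula of Definition~\ref{definition-persistent-magnitude-function} applied to $hM$,
\[
|t\, hM| = \sum_{j=1}^{N} \bigl(e^{-h(a_j) t} - e^{-h(b_j) t}\bigr).
\]
Because $h$ is injective, collecting like terms on the right groups exactly the same summands as before, with the same signed multiplicities $\lambda_i$, now attached to the points $h(r_i)$. This already gives the conclusion when the input representation of $|tM|$ is the canonical one coming from the barcode.

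The only subtle point — and the main thing to address — is that the lemma permits \emph{any} representation $|tM| = \sum_{i=1}^n \lambda_i e^{-r_i t}$, not merely the canonical one. For this I would invoke linear independence of the functions $\{e^{-r t}\}_{r \in \R}$ on $(0,\infty)$: two such finite exponential sums coincide if and only if, after grouping terms with equal $r_i$, the resulting coefficients match. Since $h$ is a bijection, grouping terms with equal $r_i$ on the left corresponds bijectively to grouping terms with equal $h(r_i)$ on the right, so the substitution $r_i \mapsto h(r_i)$ is well defined on exponential sums and sends $|tM|$ to $|thM|$ regardless of which representation was chosen.
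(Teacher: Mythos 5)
Your proposal is correct and follows essentially the same route as the paper, which states Lemma~\ref{lemma-homeo} as an immediate consequence of Remark~\ref{remark-homeo}: decompose $M$ into interval modules, apply $h\kk[a,b)=\kk[h(a),h(b))$ termwise, and read off the magnitude function. Your extra observation about linear independence of the exponentials $e^{-rt}$ merely makes explicit the well-definedness point (independence of the chosen representation of $|tM|$) that the paper leaves implicit, so there is nothing to add.
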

%
%\begin{proof}
%This follows from Remark \ref{remark-homeo} by expressing the two magnitude functions in terms of the interval decomposition of $M$ and using the fact that the functions $(t\mapsto e^{-r t})_{r\in\R}$ are linearly independent.
%\end{proof}

\begin{remark}[Connection with the Euler characteristic of a barcode]
	In section~6 of~\cite{bobrowski}, Bobrowski and Borman
	define the \emph{Euler characteristic} of a barcode
	with no bars of length $\infty$,
	or in other words of a finitely presented graded persistence module
	$M$ with the property that $M(t)=0$ for $t$ sufficiently large.
	The definition is given by
	\[
		\chi(M_*)=\sum_{i=1}^b(-1)^{|\beta_i|}(b_i-a_i)
	\]
	where $\beta_1,\ldots,\beta_b$ are the bars of $M_\ast$, 
	and $\beta_i=[a_i,b_i)$.
	They then describe a connection between the Euler integral 
	of a function and the Euler characteristic of the barcode
	of the persistent homology of that function.
	Observe that the Euler characteristic of a finitely
	presented persistence module is related to its magnitude
	as follows:
	\[
		\chi(M_*)=|M_\ast|'(0)
	\]
	Thus the magnitude of a persistence module encodes its Euler
	characteristic.
	(In order to form $|M_\ast|'(0)$ above we have extended the magnitude
	function $t\mapsto |tM_\ast|$ to $0$ in the evident way
	(see the formula in 
	Definition~\ref{definition-persistent-magnitude-function})
	and then taken a one-sided derivative.)
\end{remark}

\subsection{Persistent magnitude and products}

Now we will explore how the persistent magnitude
interacts with the tensor product of persistence modules,
which was described in Section~\ref{section-background-persistence}.

\begin{proposition}[Persistent magnitude respects products]
\label{proposition-tensor}
	Let $M$ and $N$ be finitely presented persistence modules.
	Then
	\begin{equation}
	\label{equation-products}
		|M|\cdot|N|
		=
		|M\otimes N|
		-|\tor_1(M,N)|.	
	\end{equation}
\end{proposition}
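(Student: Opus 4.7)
The plan is to reduce the identity to the case of interval modules by a bilinearity argument, and then verify it by direct computation using the explicit formulas for tensor products and Tor of intervals recalled in Section~\ref{section-background-persistence}.

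First, I observe that magnitude is additive on finite direct sums: this follows immediately from additivity on short exact sequences (Proposition~\ref{proposition-ses}) applied to the split sequence $0 \to M \to M\oplus M' \to M' \to 0$. Next, tensor product and $\tor_1$ both distribute over direct sums in each variable. Hence, writing $M \cong \bigoplus_i \kk[a_i,b_i)$ and $N \cong \bigoplus_j \kk[c_j,d_j)$, the left-hand side of \eqref{equation-products} expands as $\sum_{i,j} |\kk[a_i,b_i)|\cdot|\kk[c_j,d_j)|$ and the right-hand side expands as $\sum_{i,j}\bigl(|\kk[a_i,b_i)\otimes \kk[c_j,d_j)| - |\tor_1(\kk[a_i,b_i),\kk[c_j,d_j))|\bigr)$. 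Therefore it suffices to check \eqref{equation-products} when $M$ and $N$ are individual interval modules.

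So suppose $M = \kk[a,b)$ and $N = \kk[c,d)$. Using Example~\ref{example-interval}, the left-hand side is
\[
(e^{-a}-e^{-b})(e^{-c}-e^{-d}) = e^{-(a+c)} - e^{-(a+d)} - e^{-(b+c)} + e^{-(b+d)}.
\]
For the right-hand side, the explicit formulas recalled after the definition of the tensor product give
\[
|\kk[a,b)\otimes\kk[c,d)| = e^{-(a+c)} - e^{-\min(a+d,\,b+c)},
\]
\[
|\tor_1(\kk[a,b),\kk[c,d))| = e^{-\max(a+d,\,b+c)} - e^{-(b+d)}.
\]
Subtracting and using the elementary identity
\[
e^{-\min(a+d,\,b+c)} + e^{-\max(a+d,\,b+c)} = e^{-(a+d)} + e^{-(b+c)},
\]
one sees the right-hand side agrees with the left.

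The one point requiring minor care is the handling of the convention $e^{-\infty}=0$ when some $b_i$ or $d_j$ is infinite: in these cases the Tor term vanishes (the ``bar'' degenerates) and the tensor product bar becomes half-infinite, but the same algebra goes through unchanged provided one interprets $\min$ and $\max$ in the extended reals. This is the only mild obstacle; everything else is a routine bilinearity reduction followed by the min-max cancellation above.
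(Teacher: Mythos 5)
Your proof is correct and follows essentially the same route as the paper: reduce to interval modules by additivity and distributivity over direct sums, then verify the identity directly from the formulas for $\kk[a,b)\otimes\kk[c,d)$ and $\tor_1(\kk[a,b),\kk[c,d))$, with the min/max cancellation and the $e^{-\infty}=0$ convention handling the degenerate cases. The paper's proof is just a terser version of this argument, so no further changes are needed.
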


\begin{proof}
	It suffices to prove this when $M$ and $N$ are interval
	modules.  In this case both sides of the equation
	can be computed directly using the results stated
	in Section~\ref{section-background-persistence}.
\end{proof}

Let us explain why equation~\eqref{equation-products} states that
`persistent magnitude respects tensor products', since it may look a little odd
from that point of view.
Homological algebra tells us that to fully understand the
tensor product of $M$ and $N$, we must consider the 
graded object $\tor_\ast(M,N)$. 
(Serre's intersection formula in algebraic geometry is a good
example of this principle in action.)
Thus, the `true' statement that persistent magnitude
respects products would be
\[
	|M|\cdot |N|
	=
	|\tor_\ast(M,N)|
	=
	\sum_{i=0}^\infty(-1)^i |\tor_i(M,N)|.
\]
But this reduces to exactly the equation appearing in the proposition.

The usefulness of Proposition~\ref{proposition-tensor} is that it
leads to product preserving properties of the magnitude of
persistent homology theories.  Indeed, suppose given a theory
that assigns to objects $X$ a graded persistence module
$A_\ast(X)$, and suppose that objects $X$ and $Y$ 
can be equipped with a product $X\times Y$ 
in such a way that we have a K\"unneth theorem:
\[
	0\to A_\ast(X)\otimes A_\ast(Y)
	\to
	A_\ast(X\times Y)
	\to
	\tor_1(A_\ast(X),A_{\ast-1}(Y))
	\to
	0
\]
Examples (and non-examples)
of such K\"unneth theorems are discussed in~\cite{BubenikMilicevic},
\cite{PSS} and~\cite{CarlssonFilippenko}.
In such a setting, Proposition~\ref{proposition-tensor} can be used to prove
the identity
\[
	|A_\ast(X\times Y)|
	=
	|A_\ast(X)|\cdot|A_\ast(Y)|.
\]
Indeed, we have
\begin{align*}
	|A_\ast(X\times Y)|
	&=
	|A_\ast(X)\otimes A_\ast(Y)|
	+
	|\tor_1(A_\ast(X),A_{\ast-1}(Y))|
	\\
	&=
	|A_\ast(X)\otimes A_\ast(Y)|
	-
	|\tor_1(A_\ast(X),A_{\ast}(Y))|
	\\
	&=
	|A_\ast(X)|\cdot|A_\ast(Y)|+|\tor_1(A_\ast(X),A_\ast(Y))|
	- |\tor_1(A_\ast(X),A_\ast(Y))|
	\\
	&=
	|A_\ast(X)|\cdot |A_\ast(Y)|
\end{align*}
where the first equality comes from the short exact sequence
(Proposition~\ref{proposition-ses})
and the third comes from~\eqref{proposition-tensor}.

\subsection{Persistent magnitude via
derived associated graded modules}
\label{subsection-associated-graded}

In this subsection we will give a perspective on the definition
of persistent magnitude using the homological algebra of the
`associated graded' or `causal onset' functor.
This will allow us to give a proof of Proposition~\ref{proposition-ses}.
It also gives a potential avenue for extending the definition
of persistent magnitude beyond the case of finitely presented modules.

As described in section~\ref{section-background-persistence}
we denote by $\persmod$ the category of persistence modules
and by $\grmod$ the category of $\R$-graded modules.
These are abelian categories, and $\persmod$ has enough projectives
--- they are the interval modules $\kk[a,\infty)$ for $a\in\R$.
For $a\in\R$ we let $\kk_a$ denote the object of $\grmod$
consisting of $\kk$ in grading $a$ and $0$ in all other gradings.

\begin{definition}
	The \emph{associated graded functor}
	$\Gr\colon\persmod\to\grmod$ is defined by
	\[
		\Gr(M)(s) = \frac{M(s)}{\sum_{s'<s}\mathrm{im}(M(s')\to M(s))}
	\]
	for $s\in\R$.
	The functor $\Gr$ is right exact, and we denote its derived functors
	by $\Gr_i(M)$, $i\geq 0$, with $\Gr_0(M)=\Gr(M)$.
\end{definition}

The terminology above was chosen because 
if the persistence module $M$ is obtained from an $\R$-filtered
vector space in the evident way, then $\Gr(M)$ is nothing other
than the $\R$-graded vector space associated to $M$.
The functor $M\mapsto \Gr(M)_s$ appears
in~\cite{vongmasacarlsson}, where it is denoted by $\mathcal{O}_s$, 
and called the \emph{causal onset} functor.

\begin{example}\label{example-gr}
	For a free module $\kk[a,\infty)$ we have 
	$\Gr_0(\kk[a,\infty))=\Gr(\kk[a,\infty))=\kk_a$ and $\Gr_1(\kk[a,\infty))=0$.
	And for an interval module $\kk[a,b)$ with $a<b$ we have a free 
	resolution
	\[
		\cdots\to 0 \to \kk[b,\infty)\to \kk[a,\infty)\to \kk[a,b)\to 0
	\]
	so that $\Gr_0(\kk[a,b))=\kk_a$ and $\Gr_1(\kk[a,b))=\kk_b$.
	It follows that for finitely presented modules,
	$\Gr_i(M)=0$ for $i>1$.
\end{example}

\begin{definition}[Graded magnitude]
	The \emph{graded magnitude} of a finitely presented
	object of $\grmod$, i.e.~a module of the form
	$\bigoplus_{i=1}^n \kk_{a_i}$ %where $a_1,\ldots,a_n\in[0,\infty)$,
	is
	\[
		\left|\bigoplus_{i=1}^n \kk_{a_i}\right|
		=
		\sum_{i=1}^n e^{-a_i}.
	\]
\end{definition}

The graded magnitude function is clearly additive with respect to short 
exact sequences of finitely presented graded modules.
The computations in Example~\ref{example-gr} give the following.

\begin{lemma}\label{lemma-graded-persistent}
	The persistent and graded magnitude are related as follows.
	Let $M$ be a finitely presented persistence module.
	Then
	\[
		|M| = |\Gr_0(M)| - |\Gr_1(M)|.
	\]
\end{lemma}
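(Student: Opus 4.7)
The plan is to reduce the identity to the case of a single interval module, where it follows by direct inspection of the computations already carried out in Example~\ref{example-gr}.

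First I would observe that both sides of the claimed equation are additive over finite direct sums. On the left, this is immediate from Definition~\ref{definition-mag}, since $|{-}|$ is defined as a sum over the bars of the barcode decomposition. On the right, the derived functors $\Gr_0$ and $\Gr_1$ preserve finite direct sums (they are derived functors of an additive functor between abelian categories), and the graded magnitude is additive over direct sums of finitely presented objects of $\grmod$ (this is clear from its explicit formula, or equivalently from its additivity over short exact sequences of such objects).

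Next, I would invoke the barcode decomposition: since $M$ is finitely presented, we may write
\[
M \cong \bigoplus_{i=1}^n \kk[a_i,b_i)
\]
with $-\infty < a_i < b_i \leq \infty$. By the additivity observation above, it therefore suffices to verify the identity $|N| = |\Gr_0(N)| - |\Gr_1(N)|$ when $N$ is a single interval module $\kk[a,b)$.

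Finally I would check the two cases directly using Example~\ref{example-gr}. If $b = \infty$, then $\Gr_0(\kk[a,\infty)) = \kk_a$ and $\Gr_1(\kk[a,\infty)) = 0$, so $|\Gr_0(N)| - |\Gr_1(N)| = e^{-a}$, which agrees with $|\kk[a,\infty)| = e^{-a}$ by the convention $e^{-\infty} = 0$. If $a < b < \infty$, then $\Gr_0(\kk[a,b)) = \kk_a$ and $\Gr_1(\kk[a,b)) = \kk_b$, so $|\Gr_0(N)| - |\Gr_1(N)| = e^{-a} - e^{-b}$, which again agrees with $|\kk[a,b)|$ by Definition~\ref{definition-mag}. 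There is no real obstacle here --- the one point requiring a small remark is the additivity of $M \mapsto |\Gr_0(M)| - |\Gr_1(M)|$ over direct sums, which is the only place where the derived structure plays a role, but it follows immediately since we are working with finitely many summands.
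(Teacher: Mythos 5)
Your proposal is correct and matches the paper's approach: the paper treats the lemma as an immediate consequence of the computations of $\Gr_0$ and $\Gr_1$ on interval modules in Example~\ref{example-gr}, exactly as you do. Your explicit reduction via additivity over direct sums simply spells out the step the paper leaves implicit.
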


We are now in a position to prove that magnitude is additive with 
respect to short exact sequences.

\begin{proof}[Proof of Proposition~\ref{proposition-ses}]
	Like any derived functors, the $\Gr_i$ convert a short exact sequence
	\[
		0\to M\to N\to P\to 0
	\]
	into a long exact sequence
	\[
		0\to \Gr_1(M)\to \Gr_1(N)\to \Gr_1(P)
		\to \Gr_0(M)\to \Gr_0(N)\to \Gr_0(P)\to 0,
	\]
	and the statement of the proposition amounts to the claim
	that the alternating sum
	of the magnitudes of the modules in this sequence is zero.
	But this is a standard consequence of additivity
	with respect to short exact sequences,
	which in the case of graded magnitude is immediate
	from the definitions.
	(Compare with the proof of Theorem~2.44 of~\cite{Hatcher}.)
\end{proof}

\begin{remark}
	The proof of Proposition~\ref{proposition-ses} shows that
	the magnitude of a persistence module $M$ depends only
	on its `derived associated graded' modules $\Gr_0(M)$ and $\Gr_1(M)$.
	In simpler terms, the magnitude depends not on the lengths
	of the bars in the barcode, but only on the collection of start
	and end points of bars in the barcode.  
	One may then argue that magnitude does not contain 
	any `persistent' information, only `graded' information.
	However, the same comment can apply to \emph{any}
	invariant of persistence modules that is additive with respect
	to short exact sequences, thanks to the short exact sequences
	\[
		0\to \kk[b,\infty)\to \kk[a,\infty)\to \kk[a,b)\to 0.
	\]
	From the point of view of graded modules,
	persistence modules and persistent homology theories 
	should perhaps then be regarded as an excellent source 
	of interesting examples.
\end{remark}

\subsection{Persistent magnitude and the Laplace transform}
\label{subsection-laplace}

Here is an alternative approach to the persistent magnitude
using the Laplace transform, that we believe is the `correct'
way to understand persistent magnitude.

Let $M$ be a finitely presented persistence module and let 
\[
	\rank(M)\colon\R\to\R
\]
be its associated rank function,
i.e.~$\rank(M)(s)=\rank(M(s))$ for $s\in\R$.
This is a step function, given by the sum of the indicator
functions of the bars of $M$.  It has a derivative in the distributional
sense, given by
\[
	\rank(M)' = \sum_{i=1}^n(\delta_{a_i}-\delta_{b_i})
\]
where $M\cong\bigoplus_{i=1}^n \kk[a_i,b_i)$,
and where $\delta_x$ denotes the Dirac
delta distribution supported at $x$.
Recall that the \emph{bidirectional Laplace transform} $\mathcal{L}\{f\}$
of a function or distribution $f$ is given by
\[
	\mathcal{L}\{f\}(t)=\int_{-\infty}^\infty f(s)e^{-st}ds
\]
for $t\in[0,\infty)$.
Then one can check directly that
\[
	|tM| = \mathcal{L}\{\rank'(M)\}(t).
\]
In particular, the right hand side can be used as an alternative definition of
magnitude function, whereas magnitude is recovered by evaluating at $1$.
For example, if $M=\kk[a,b)$ then $\rank(M)$ is the step function
$1_{[a,b)}$ and $\rank'(M) = \delta_a - \delta_b$ so that
\[
	\mathcal{L}\{\rank'(M)\}(t)
	=
	\int_{-\infty}^\infty (\delta_a(s)-\delta_b(s))e^{-st}ds
	=
	e^{-at}-e^{-bt}
	=
	|tM|.
\]

From this point of view, the additivity of persistent magnitude
with respect to short exact sequences (Proposition~\ref{proposition-ses})
is an immediate consequence of the fact that if
\[
	0\to M\to N\to P\to 0
\]
is a short exact sequence of persistence modules, then
$\rank(N) = \rank(M) + \rank(P)$.

In the case of a finitely presented graded persistence module $M_\ast$,
we can associate to it its Euler characteristic curve
(see \cite[Section 3.2]{turner2014persistent} and \cite{heiss2017streaming,
bobrowski2014topology,fasy2018challenges} for some related work),
i.e.~$\chi(M_*)(s)=\sum_{i=0}^{\infty}(-1)^i\rank(M_i(s))$ for $s\in\R$.
Then we have:
\[
|tM_\ast|=\sum_{i=0}^{\infty}(-1)^i|tM_i|
=\sum_{i=0}^{\infty}(-1)^i\mathcal{L}\{\rank'(M_i)\}(t)
=\mathcal{L}\{\chi'(M_i)\}(t).
\]
In other words, the magnitude of a finitely presented graded
persistence module is precisely the Laplace transform of the
derivative of its Euler characteristic curve. This provides
yet another connection between magnitude and TDA.

More explicitly, whenever $M_*$ is a finitely presented graded
persistence module and $r_1<\ldots<r_n<r_{n+1}=\infty$ is the
sequence of all the startpoints and endpoints in its interval
decomposition, we have
\begin{equation}\label{magnitudeviaeuler}
|tM_*|=\sum_{j=1}^n \chi(M_*)(r_j)(e^{-r_j t}-e^{-r_{j+1} t}).
\end{equation}
One interpretation of this formula is that persistent magnitude
of a graded persistence module can be regarded as the `filtered
Euler characteristic' associated to it.
%(One obtains this description using the tools of Proposition~\ref{proposition-ses}
%and the fact that the chains of $\calR(X)$ is a free persistence module.)

We include this perspective as a useful alternative point of view,
as well as a potential avenue for generalising the magnitude from
finitely presented modules to more general modules that, despite
not being finitely presented, may nevertheless have a `rank function'
or `rank distribution' that we can then differentiate and subject to
the Laplace transform.
(Here we recall~\cite{chazal2016structure}, where persistence modules
that do not admit a barcode decomposition are nevertheless 
equipped with a persistence diagram.)

\begin{remark}[Multi-parameter persistent magnitude]
	The theory of persistent homology captures the topology 
	of families governed by a single parameter, 
	but one often encounters richer structures 
	that are governed by multiple parameters.
	This is the setting of multi-parameter 
	persistence~\cite{multiparameter}.

	An \emph{$r$-parameter persistence module} 
	is a functor $M\colon (\R^r,\leq)\to\Vect$, 
	where $(x_1,\ldots,x_r)\leq (y_1,\ldots,y_r)$ 
	if $x_i\leq y_i$ for all $i$.
	Thus a persistence module is a $1$-parameter persistence module 
	in the sense of this definition.

	Adapting the methodology and results of $1$-parameter persistence to the
	multi-parameter case is a difficult and ongoing problem.
	For example, it is shown in~\cite{multiparameter} 
	that there is no discrete invariant of 
	$r$-parameter persistence modules analogous to the 
	barcode or persistence diagram of a $1$-parameter persistence module.

	It is therefore interesting to wonder whether an 
	$r$-parameter generalisation of persistent magnitude is possible.
	A first guess may be that the $r$-parameter persistent 
	magnitude function of an $r$-parameter persistence module $M$ 
	is given by a multi-parameter version of the interpretation given in 
	this section, so that we have a (partially defined) 
	function from $(0,\infty)^r$ to $\R$ defined by
	\begin{equation}\label{equation-multi}
		(t_1,\ldots,t_r)\mapsto
		\int_{\R^r}\frac{\partial^r\rank(M)}
		{\partial s_1\cdots\partial s_r}(s_1,\ldots,s_r)
		e^{-(s_1t_1+\cdots+s_rt_r)}ds_1\cdots ds_r
	\end{equation}
	where $\rank(M)$ is the function $\R^r\to\R$ 
	giving the rank of $M$ at each point, 
	and the partial derivative is again taken in 
	an appropriate distributional sense.

	For example, if we take a ``half-open box'' 
	$[a_1,b_1)\times\cdots\times[a_r,b_r)$ in $\R^r$, 
	then we may define an $r$-parameter persistence module
	$\kk\bigl([a_1,b_1)\times\cdots\times[a_r,b_r)\bigr)$ 
	whose value is $\kk$ on points within the box and $0$ elsewhere,
	and whose associated morphisms are the identity maps where possible, 
	and the zero map otherwise.
	This is the $r$-parameter analogue of an interval module $\kk[a,b)$.
	In this case, the rank function of our module is the product of the 
	indicator functions of the intervals $[a_i,b_i)$, and then one may
	compute the integral in~\eqref{equation-multi} using Fubini's theorem
	to show that 
	\[
		|\kk\bigl([a_1,b_1)\times\cdots\times[a_r,b_r)\bigr)|
		=
		|\kk[a_1,b_1)|\cdots |\kk[a_r,b_r)|
	\]
	More generally, one can take $r_1$- and $r_2$-parameter persistence
	modules and take their `exterior tensor product' to obtain an
	$(r_1+r_2)$-parameter persistence module, and one can show that
	the persistent magnitude of this exterior tensor product is the
	product of the persistent magnitudes of the two factors.

	We would expect the definition~\eqref{equation-multi}
	to have other good formal properties,
	but the challenge would be to make it applicable, 
	in particular to ensure that it can be computed for the sort
	of multi-parameter persistence modules that arise in applications.
\end{remark}

\section{Magnitude and persistent homology of sublevel sets}
\label{section-sublevel}

An important class of filtrations that can be studied by methods of persistent homology are sublevel set filtrations;
the study of these is to a large extent inspired by Morse theory.
Recall that if $f\colon X\to\R$ is a 
continuous function, then the \emph{sublevel set persistent homology}
of $(X,f)$ is the graded persistence module defined by
$s\mapsto H_\ast(f^{-1}(-\infty,s])$.

Consider the case of a Morse function $f\colon M\to\R$ on a closed smooth manifold. Being Morse, it only has finitely many nondegenerate critical points. 
%The function $f$ is called {\em nonresonant} \cite{nicolaescu2007invitation} if the critical values associated with these critical points are pairwise distinct.
The magnitude of the sublevel set persistence module associated to 
$(M,f)$,
which we refer to as the {\em Morse magnitude of $(M,f)$}
and write $|t(M,f)|_{\mathrm{Morse}}$,
has an explicit formula in terms of the critical points.

\begin{theorem}
Let $f\colon M\to\R$ be a Morse function on a closed smooth manifold $M$,
Let $S\colon (\R,\leq)\to\Top$ be the sublevel set filtration 
given by $S(s) = f^{-1}(-\infty,s]$.
Then the magnitude function of the sublevel set persistent homology 
$H_\ast S\colon(\R,\leq)\to\GrVect$ is expressed as follows:
\[
|t(M,f)|_{\mathrm{Morse}}=|t(H_\ast S)|=\sum_{p} (-1)^{\ind(p)}e^{-f(p)t}
\]
where the sum is over all critical points of $f$.
\end{theorem}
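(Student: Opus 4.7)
The plan is to build a chain complex of finitely presented persistence modules that (i) computes the sublevel set persistent homology of $(M,f)$, and (ii) has a very transparent magnitude. Once this is in place, the result will follow from Proposition~\ref{proposition-maghom} together with a one-line computation.

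First I would invoke classical Morse theory to obtain a CW structure on $M$ with one cell of dimension $k$ for each critical point of index $k$. Together with an auxiliary Morse--Smale gradient-like vector field, one can arrange the attaching maps so that each $k$-cell $e_p$ corresponding to a critical point $p$ is attached inside $f^{-1}(-\infty,f(p))$. Let $M^{(s)}$ denote the subcomplex consisting of those cells $e_p$ with $f(p)\leq s$. Standard Morse theory gives a homotopy equivalence $M^{(s)}\hookrightarrow f^{-1}(-\infty,s]$ compatible with the inclusions $s\leq s'$, so the persistence modules $s\mapsto H_\ast(M^{(s)})$ and $H_\ast S$ are isomorphic.

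Next I would package the cellular chain complex of this filtration as a chain complex of persistence modules. Concretely, define
\[
C_k(M,f) \;=\; \bigoplus_{\substack{p\text{ critical}\\ \ind(p)=k}} \kk[f(p),\infty),
\]
with differentials induced by the cellular boundary maps; since $e_p$ is attached inside $f^{-1}(-\infty,f(p))$, each boundary map respects the filtration and defines a morphism of persistence modules. Because $f$ has only finitely many critical points, $C_\ast(M,f)$ is a finitely presented chain complex of persistence modules, and its homology in parameter $s$ is precisely the cellular homology of $M^{(s)}$, hence isomorphic to $H_\ast S(s)$.

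Now apply Proposition~\ref{proposition-maghom} to obtain $|tH_\ast S|=|tC_\ast(M,f)|$. Using Example~\ref{example-interval} (that is, $|t\kk[a,\infty)|=e^{-at}$) and the definition of magnitude for graded persistence modules,
\[
|tC_\ast(M,f)| \;=\; \sum_{k\geq 0}(-1)^k \sum_{\ind(p)=k} e^{-f(p)t} \;=\; \sum_{p} (-1)^{\ind(p)} e^{-f(p)t},
\]
which is the desired formula.

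The main obstacle is the first step: producing a filtered CW structure whose inclusion into the sublevel set filtration is a levelwise homotopy equivalence, and in which each $k$-cell attached at the critical value $f(p)$ has its attaching map supported in strictly lower sublevel sets. This is a standard but nontrivial piece of Morse theory; once it is taken for granted, the rest is essentially formal. If one prefers to bypass this construction, an alternative is to argue directly on barcodes: a standard pairing of critical points (e.g.\ via the persistence algorithm applied to the Morse complex) shows that every bar $[a,b)$ of $H_\ast S$ in degree $k$ arises either from a pair $(p_+,p_-)$ of critical points with $a=f(p_+)$, $b=f(p_-)$, $\ind(p_+)=k$, $\ind(p_-)=k+1$, or from a single critical point of index $k$ giving an essential bar $[f(p),\infty)$. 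Summing $(-1)^k(e^{-at}-e^{-bt})$ over all bars and regrouping by critical point yields the stated formula, since each critical point $p$ of index $k$ contributes $(-1)^{\ind(p)}e^{-f(p)t}$ regardless of whether it opens or closes a bar.
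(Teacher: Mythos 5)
Your proof is correct, but it takes a genuinely different route from the paper. The paper never builds a chain-level model: it only needs the Euler characteristics of the sublevel sets. Using the two classical Morse lemmas (no critical values in $(a,b]$ gives a deformation retraction, and passing a critical value attaches cells), it computes $\chi(M^v)=\sum_{f(p)\leq v}(-1)^{\ind(p)}$, feeds this into the ``filtered Euler characteristic'' formula \eqref{magnitudeviaeuler}, and finishes by exchanging the order of summation so that the sum telescopes to $\sum_p(-1)^{\ind(p)}e^{-f(p)t}$. You instead construct a filtered CW/Morse chain complex $C_k(M,f)=\bigoplus_{\ind(p)=k}\kk[f(p),\infty)$ whose homology is the sublevel set persistent homology, and then invoke Proposition~\ref{proposition-maghom}; this is clean and has the pleasant feature that the differentials are irrelevant to the magnitude, but it rests on a strictly stronger Morse-theoretic input than the paper uses, namely a filtration-compatible cellular model with levelwise homotopy equivalences commuting with the inclusions (equivalently, that the filtered Morse complex computes sublevel set persistent homology). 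That statement is standard but is exactly the nontrivial step you defer, whereas the paper's argument needs only the Euler characteristic of each sublevel set, which follows from Milnor's theorems without any compatibility issues. Your fallback argument via the pairing of critical points into bars is also valid (and handles the regrouping correctly, including the degenerate case of a pair with equal critical values contributing zero), but it again presupposes the filtered Morse complex; note it is essentially the decomposition of a complex of free persistence modules into pieces $0\to\kk[b,\infty)\to\kk[a,\infty)\to 0$ and $\kk[a,\infty)$, which is the same structure underlying Lemma~\ref{lemma-graded-persistent}.
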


\begin{proof}
A basic result of Morse theory
\cite[Theorem 3.1]{milnor2016morse}
states that if $a<b$ are real numbers such that $f^{-1}(a,b]$ 
contains no critical points of $f$, then
$M^b=f^{-1}(-\infty,b]$ deformation retracts onto $M^a=f^{-1}(-\infty,a]$.
It follows that the critical values (i.e.~the values $f(p)$ of $f$ at the
critical points $p$) are the startpoints and endpoints
of the interval decomposition of $H_\ast S$.
List the critical values as $v_1<v_2<\cdots<v_k$.
We may now use the description of magnitude as the filtered Euler characteristic \eqref{magnitudeviaeuler}:
\[
|t(H_\ast S)|=\sum_{i=1}^k\chi(M^{v_i})(e^{-v_it}-e^{-v_{i+1}t}),
\]
where $v_{k+1}$ is interpreted as $\infty$. 
Another basic result of Morse theory
\cite[Theorem 3.2, Remark 3.3 \& Remark 3.4]{milnor2016morse}
states the following.  Suppose that $b$ is a critical
value of $f$, and $a<b$ is such that there are no critical
values of $f$ in $(a,b)$, and let $p_1,\ldots,p_r$ be the critical
points of $f$ with critical value $b$.
Then $M^b$ has a subspace of the form 
$M^a\cup e^{\ind(p_1)}\cup\cdots\cup e^{\ind(p_r)}$, 
and $M^b$ deformation retracts onto 
$M^a\cup e^{\ind(p_1)}\cup\cdots\cup e^{\ind(p_r)}$.
Using this result,
we then have 
$\chi(M^b) = \chi(M^a) + \sum_{j=1}^r (-1)^{\ind(p_j)}
= \chi(M^a) + \sum_{p\colon\ind(p)=b} (-1)^{\ind(p)}$.
It follows that if $v$ is a critical value of $f$, then
\[
	\chi(M^v) = \sum_{p\colon f(p)\leq v} (-1)^{\ind(p)}
\]
where the sum is over critical points with critical value at most $v$.
We now have
\begin{align*}
	|t(H_\ast S)|
	&=
	\sum_{i=1}^k\chi(M^{v_i})(e^{-v_it}-e^{-v_{i+1}t})
	\\
	&= 
	\sum_{i=1}^k \sum_{p\colon f(p)\leq v_i}(-1)^{\ind(p)}
	(e^{-v_i t}-e^{-v_{i+1}t})
	\\
	&= 
	\sum_p 
	(-1)^{\ind(p)}
	\sum_{v_j\colon f(p)\leq v_j}
	(e^{-v_j t}-e^{-v_{j+1}t})
	\\
	&= 
	\sum_p (-1)^{\ind(p)} e^{-f(p) t}
\end{align*}
\end{proof}

\begin{remark}
This could be generalised in a straightforward way to the case of the sublevel set filtration associated to any tame function $f\colon X\to\R$ on a topological space $X$ using the concept of {\em homological critical value \cite{bubenik2014categorification,cohen2007stability,govc2016definition}}.
\end{remark}

\begin{example}[Distance filtration]
Consider a subset $A\subseteq \R^n$ and filter $\R^n$ by $B(A,r)=\bigcup_{a\in A}B(x,r)$. This is the sublevel set filtration associated to the distance function $x\mapsto d(x,A)$. Applying singular homology $H_*$ to this filtration yields a graded persistence module.

For example, for the standard embedding $i\colon S^{n-1}\hookrightarrow\R^n$ we obtain the persistence module consisting of a $\kk[0,\infty)$ bar in degree $0$ and a $\kk[0,1)$ bar in degree $n-1$. In particular, the associated magnitude function, which could also be called `the distance magnitude function' is $|t S^{n-1}|_{\mathrm{dist}}=1+(-1)^{(n-1)}(1-e^{-t})$.

In the case $A$ is finite, the graded persistence module obtained is isomorphic to the \v Cech persistent homology module associated to $A$. The corresponding magnitude function could therefore reasonably be called the `\v Cech magnitude function' of $A$ and denoted $|t A|_{\mathrm{\check{C}ech}}$.
\end{example}

\section{Rips magnitude}
\label{section-rips}

In this section we will apply the persistent magnitude
developed earlier to the persistent homology of the Rips 
complex in order to obtain a new, variant form of magnitude
of a finite metric space.  Here  we explore the basic properties
of this new invariant, before going into further detail in later sections.

\begin{definition}[Rips magnitude]
	Let $X$ be a finite metric space.  
	Then the chains $C_\ast(\calR(X))$ of its Rips complex
	are a chain complex of finitely presented persistence
	modules, concentrated in degrees less than 
	the cardinality of $X$. The same therefore holds for
	the homology $H_\ast(\calR(X))$.
	The \emph{Rips magnitude} of $X$ is defined to
	be the magnitude of the chains of the Rips complex 
	or equivalently the magnitude of its homology:
	\[
		|X|_{\mathrm{Rips}} 
		=|C_\ast(\calR(X))|
		= |H_\ast(\calR(X))|
	\]
	The \emph{Rips magnitude function} of $X$ is defined
	as $t\mapsto|tX|_{\mathrm{Rips}}$, which is equal to
	\[
		|tX|_{\mathrm{Rips}} 
		=|tC_\ast(\calR(X))|
		= |tH_\ast(\calR(X))|
	\]	
\end{definition}

\begin{proposition}
	The Rips magnitude (function) of a finite metric space $X$
	has the following properties.
	\begin{enumerate}
		\item
		The Rips magnitude is computed by the formula:
		\begin{equation}
		\label{subsets-formula}
			|tX|_{\mathrm{Rips}} = 
			\sum_{A\subseteq X,\, A\neq\emptyset}
			(-1)^{\#A-1}e^{-\diam(A)t}
		\end{equation}

		\item
		If $H_\ast(\calR(X))$ has barcode with
		bars $[a_{k,0},b_{k,0}),\ldots,[a_{k,m_k},b_{k,m_k})$
		in degree $k$, then:
		\begin{equation}
		\label{barcode-formula}
			|tX|_{\mathrm{Rips}} 
			= 
			\sum_{k=0}^{\#X-1}\sum_{j=0}^{m_k}
			(-1)^k(e^{-a_{k,j}t}-e^{-b_{k,j}t})
		\end{equation}
		
		\item
		If $0=d_0<d_1<\ldots<d_n$ is the set of all pairwise distances between
		elements of $X$ arranged in a sequence and $d_{n+1}=\infty$, then:
		\begin{equation}
		\label{euler-formula}
		|tX|_{\mathrm{Rips}}=\sum_{j=0}^n \chi(\calR_{d_j}(X))(e^{-d_j t}-e^{-d_{j+1} t}).
		\end{equation}

		\item
		$\lim_{t\to 0}|tX|_{\mathrm{Rips}} = 1$ 
		and 
		$\lim_{t\to\infty}|tX|_{\mathrm{Rips}}=|X|$.
	\end{enumerate}
\end{proposition}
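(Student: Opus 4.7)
The plan is to establish part (1) directly from the persistence-module structure of the Rips chain complex, and then derive (2), (3), and (4) as routine consequences of (1) together with formal properties of persistent magnitude established earlier in the paper.

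For \eqref{subsets-formula} I would start from the observation that each non-empty subset $A\subseteq X$ of size $k+1$ corresponds to a $k$-simplex that enters the Rips complex at parameter $\diam(A)$ and persists forever thereafter. Hence as a persistence module
\[
C_k(\calR(X)) \;\cong\; \bigoplus_{A\subseteq X,\,\#A=k+1}\kk[\diam(A),\infty).
\]
Applying Definition~\ref{definition-mag} summand-by-summand and combining with the sign $(-1)^k$ coming from the definition of the magnitude of a graded persistence module yields
\[
|tX|_{\mathrm{Rips}} = |tC_\ast(\calR(X))| = \sum_{k\geq 0}(-1)^k\!\sum_{\#A=k+1}e^{-\diam(A)t} = \sum_{\emptyset\neq A\subseteq X}(-1)^{\#A-1}e^{-\diam(A)t},
\]
which is \eqref{subsets-formula}. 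Formula \eqref{barcode-formula} is then immediate: Proposition~\ref{proposition-maghom} gives $|C_\ast(\calR(X))|=|H_\ast(\calR(X))|$, and the right-hand side of \eqref{barcode-formula} is just Definition~\ref{definition-mag} applied to the barcode of $H_\ast(\calR(X))$, truncated at $k=\#X-1$ because the Rips complex is at most $(\#X-1)$-dimensional.

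For \eqref{euler-formula} I would write $\chi_j=\chi(\calR_{d_j}(X))=\sum_{A:\diam(A)\leq d_j}(-1)^{\#A-1}$ and regroup the sum \eqref{subsets-formula} by the value of $\diam(A)$, obtaining $|tX|_{\mathrm{Rips}}=\sum_{j=0}^{n}(\chi_j-\chi_{j-1})e^{-d_j t}$ with the convention $\chi_{-1}:=0$. A standard Abel summation (with $e^{-d_{n+1}t}:=0$) then rearranges this into the telescoping form of \eqref{euler-formula}. Alternatively, one can invoke equation~\eqref{magnitudeviaeuler} directly for $H_\ast(\calR(X))$, noting that all birth and death times of its barcode lie in $\{d_0,\ldots,d_n\}$ and that $\chi(\calR_r(X))$ is constant on each interval $[d_j,d_{j+1})$.

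For (4), I would take termwise limits in \eqref{euler-formula}. As $t\to 0$, each difference $e^{-d_j t}-e^{-d_{j+1}t}$ with $j<n$ tends to $0$, while the final term tends to $\chi_n\cdot 1$; since $\calR_{d_n}(X)$ is the full simplex on $X$ and hence contractible, $\chi_n=1$. As $t\to\infty$, every term with $d_j>0$ vanishes, so only $j=0$ survives, giving $\chi_0=\chi(\calR_0(X))=\#X$. The main conceptual content lies entirely in \eqref{subsets-formula}: once the persistence-module decomposition of the Rips chain complex is identified, (2)–(4) are formal manipulations with finite sums and present no further difficulty.
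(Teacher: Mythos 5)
Your proof is correct and takes essentially the same route as the paper's: part (1) from the barcode of $C_\ast(\calR(X))$ with one bar $[\diam(A),\infty)$ in degree $\#A-1$ for each nonempty $A\subseteq X$, part (2) from $|C_\ast(\calR(X))|=|H_\ast(\calR(X))|$, part (3) from regrouping by diameter (your Abel summation is just the computation behind \eqref{magnitudeviaeuler}, which the paper cites directly), and part (4) by termwise limits. The only point worth noting is that your $t\to\infty$ computation yields the cardinality $\#X$, which is indeed the correct value and the intended reading of ``$|X|$'' in item (4) (Rips magnitude, like magnitude, interpolates between $1$ and the number of points), so there is no genuine discrepancy.
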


The fourth part of the proposition suggests
that the Rips magnitude is an `effective number of points',
in the same spirit as the magnitude.

\begin{proof}
	For the first part, we use the description
	$|tX|_{\mathrm{Rips}} = |t C_\ast(\calR(X))|$.
	Now $C_\ast(\calR(X))$ has barcode with one
	bar for each nonempty subset $A$ of $X$,
	and this bar lies in degree $\#A-1$,
	and has type $[\mathrm{diam}(A),\infty)$.
	The definition of persistent magnitude gives 
	the result immediately.
	The second part follows from the 
	description $|tX|_{\mathrm{Rips}}=|t H_\ast(\calR(X))|$.
	The third part follows from either the first or the
	second part using the formula \eqref{magnitudeviaeuler}.
	The fourth part follows directly from
	the first, and it can also be deduced from the 
	barcode description given there using
	Proposition~\ref{proposition-limiting}.	
\end{proof}

\begin{example}[Rips magnitude of the one-point space]
	Let $X$ denote the space consisting of a single point $x$.
	Then it has precisely one nonempty subset, namely
	$X$ itself, and $\#X=1$ while $\diam(X)=0$.
	Using formula~\eqref{barcode-formula}
	then gives us $|tX|_{\mathrm{Rips}}=1$.
\end{example}

\begin{example}[Rips magnitude of two-point spaces]	
	Let $X=\{x_1,x_2\}$ be the two-point space in which
	$d_X(x_1,x_2)=d$ for some $d>0$.
	
	Let us compute $|tX|_{\mathrm{Rips}}$ using 
	\eqref{subsets-formula}.
	The nonempty subsets of $X$ are $A_1=\{x_1\}$,
	$A_2=\{x_2\}$ and $A_3=X$, with
	$\#A_1=1$, $\#A_2=1$, $\#A_3=2$,
	$\diam(A_1)=0$, $\diam(A_2)=0$ and $\diam(A_3)=d$.
	Thus we have
	\begin{align*}
		|tX|_{\mathrm{Rips}}
		&=
		(-1)^{1-1}\cdot e^{-0t}
		+
		(-1)^{1-1}\cdot e^{-0t}
		+
		(-1)^{2-1}\cdot e^{-dt}
		\\
		&=
		1 + 1 -e^{-dt}
		\\
		&=
		2-e^{-dt}.
	\end{align*}

	Let us also compute $|tX|_{\mathrm{Rips}}$ using~\eqref{barcode-formula}.
	The Rips-homology $H_\ast(\calR(X))$ has barcode
	with bars $[0,\infty)$ and $[0,d)$ in degree $0$,
	and no other bars.  Thus~\eqref{barcode-formula}
	gives us
	\begin{align*}
		|tX|_{\mathrm{Rips}}
		&=
		(e^{-0t}-e^{-\infty t})
		+
		(e^{-0t} - e^{-dt})
		\\
		&=
		(1-0)
		+
		(1 - e^{-dt})
		\\
		&=
		2-e^{-dt}.
	\end{align*}
	(Recall our convention that $e^{-\infty}=0$.)
\end{example}

\begin{example}
	The Rips magnitude is not necessarily increasing or convex, 
	it can attain negative values, and it can attain values
	greater than the cardinality of the space.
	For instance, for the complete bipartite graph $K_{5,6}$ (with
	the graph metric) the Rips magnitude is given by
	\[
	|t K_{5,6}|_{\mathrm{Rips}}=11-30e^{-t}+20e^{-2t}.
	\]
	with graph:
	\begin{center}
		\includegraphics[width=200pt]{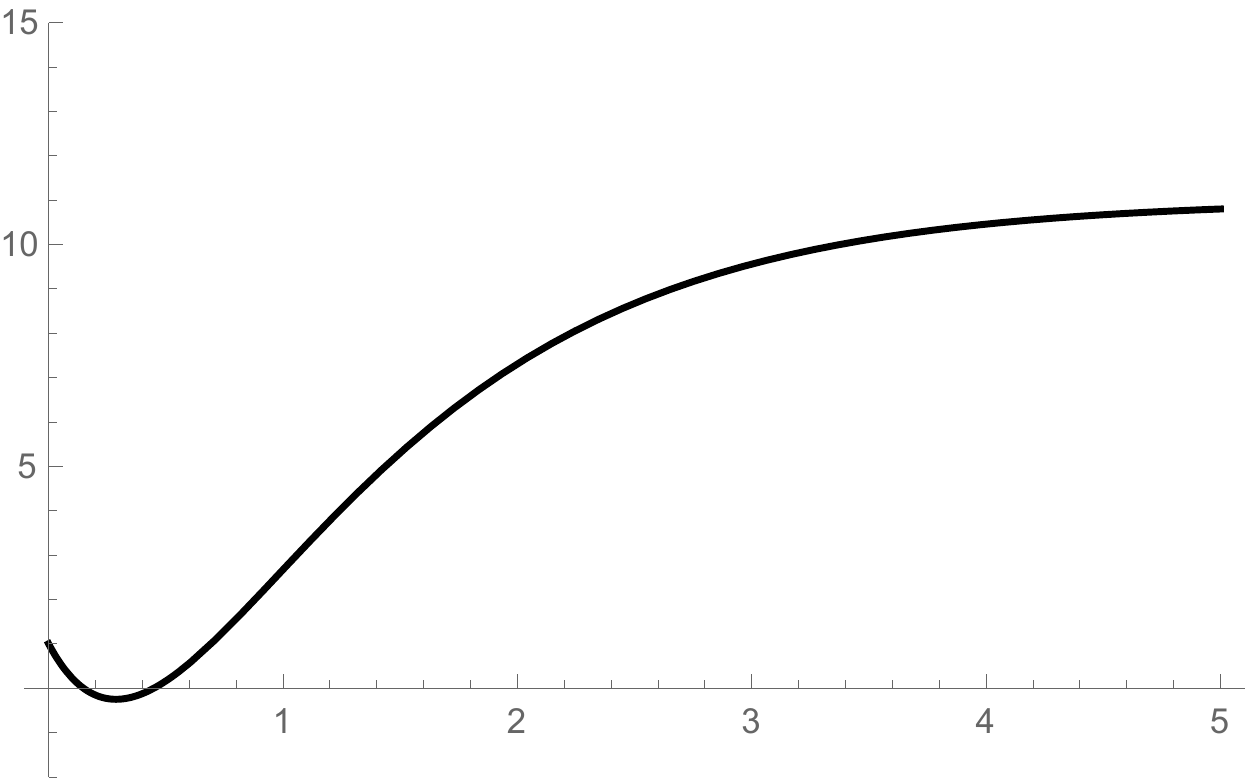}
	\end{center}
	And for the complete tripartite graph $X=K_{4,4,4}$, Rips magnitude
	is given by
	\[
	|t K_{4,4,4}|_{\mathrm{Rips}}=12+16e^{-t}-27e^{-2t}
	\]
	with graph:
	\begin{center}
		\includegraphics[width=200pt]{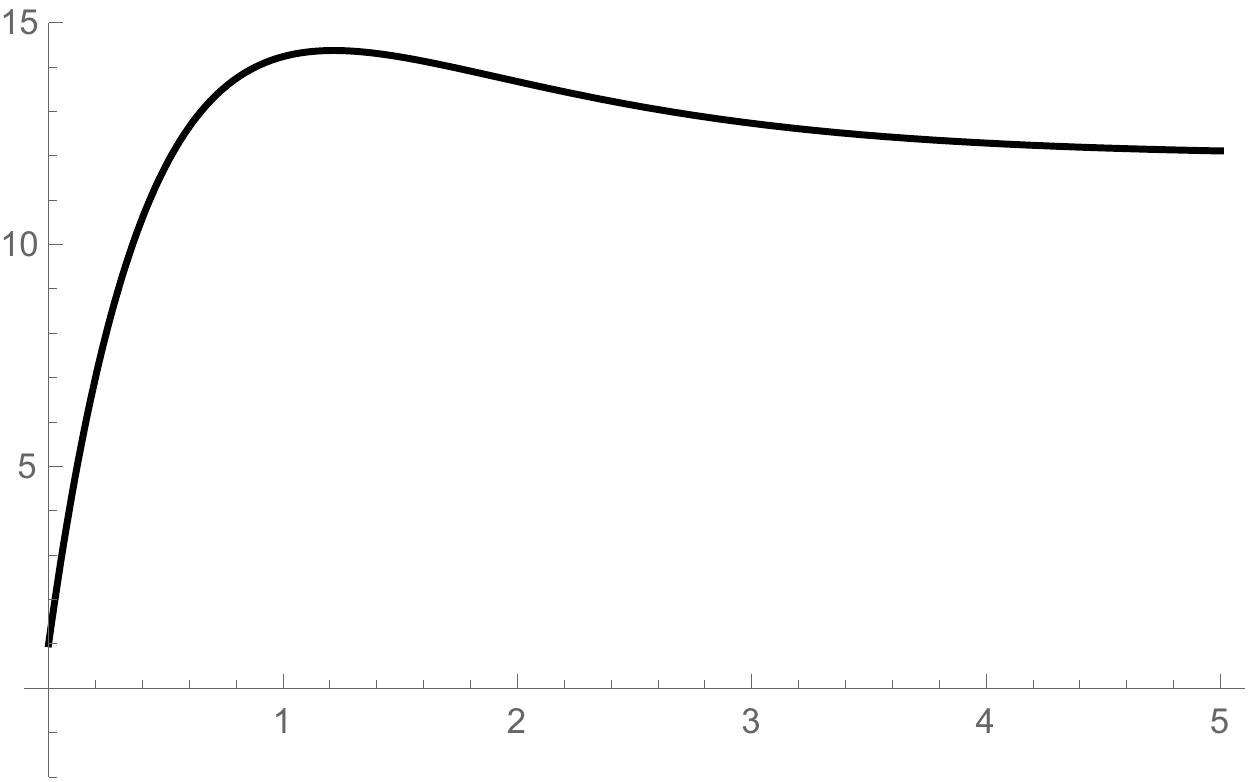}
	\end{center}
	Note that in general, if the metric only assumes integer values, 
	as in the case of a graph metric, 
	the associated Rips magnitude function is a polynomial in $q=e^{-t}$.
\end{example}

Now we consider the case of subsets of the real line,
where the computation is a little less trivial but accessible nonetheless.

\begin{proposition}\label{proposition-subsets-of-R}
	Let $A$ be a finite subset of the real line $\R$,
	with its induced metric.
	Order the elements of $A$ by size, $a_1<\ldots<a_n$. Then
	\[
	|t A|_{\mathrm{Rips}}=n-\sum_{j=1}^{n-1}e^{-(a_{j+1}-a_{j})t}.
	\]
\end{proposition}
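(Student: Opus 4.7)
The plan is to apply the subset-sum formula \eqref{subsets-formula}, which expresses $|tA|_{\mathrm{Rips}}$ as $\sum_{\emptyset\neq B\subseteq A}(-1)^{\#B-1}e^{-\diam(B)t}$, and to regroup the terms according to the pair $(\min B,\max B)$. The key observation that makes this tractable is that for $B\subseteq A\subset\R$, the diameter $\diam(B)=\max B-\min B$ depends only on the extreme elements, so all subsets with the same endpoints share the same exponential factor and can be summed together.

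First I would dispose of singletons separately: the $n$ subsets of size $1$ each contribute $(-1)^0 e^{0}=1$, for a total of $n$. For each pair of indices $i<j$, the subsets $B\subseteq A$ with $\min B=a_i$ and $\max B=a_j$ are exactly those of the form $\{a_i,a_j\}\cup S$ for arbitrary $S\subseteq\{a_{i+1},\ldots,a_{j-1}\}$. There are $\binom{j-i-1}{k}$ such subsets with $\#B=k+2$, so their combined contribution to the magnitude is
\begin{equation*}
e^{-(a_j-a_i)t}\sum_{k=0}^{j-i-1}\binom{j-i-1}{k}(-1)^{k+1}=-e^{-(a_j-a_i)t}\,(1-1)^{j-i-1}.
\end{equation*}
By the binomial theorem this vanishes whenever $j-i-1\geq 1$, so the only surviving pairs are the consecutive ones $j=i+1$, each contributing $-e^{-(a_{i+1}-a_i)t}$. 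Adding these to the singleton contribution $n$ gives the stated formula.

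There is no substantial obstacle here; the entire argument is a combinatorial rearrangement, and the only minor subtlety is keeping the signs and indices straight when expanding $(-1)^{\#B-1}=(-1)^{k+1}$. As a sanity check, one could verify the result via an alternative route: for $A\subset\R$ the Rips complex $\calR_r(A)$ at any scale $r$ has each connected component contractible (for instance by the nerve-theorem comparison with the \v Cech cover of $\R$ by balls of radius $r/2$ around the $a_i$), so its homology is concentrated in degree $0$ with barcode consisting of one bar $[0,\infty)$ together with one bar $[0,a_{j+1}-a_j)$ per consecutive gap; formula \eqref{barcode-formula} then immediately yields the same answer. I would include only the first argument in the proof, since it needs no auxiliary homotopy-theoretic input.
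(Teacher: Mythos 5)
Your proof is correct and follows essentially the same route as the paper: both apply the subset-sum formula \eqref{subsets-formula}, group subsets of $A$ by their minimum and maximum elements, and observe that the signed count of subsets with fixed non-adjacent endpoints vanishes (your binomial-theorem computation is the same cancellation the paper expresses as an alternating sum over subsets of the intermediate points). The only difference is cosmetic bookkeeping of the signs and indices.
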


We can think of this result as follows.
Take $A_1=\{a_1\}$, $A_2=\{a_1,a_2\}$, $A_3=\{a_1,a_2,a_3\}$
and so on, so that $A=A_n$.  Then the proposition tells us that
$|t A_1|_{\mathrm{Rips}}=1$, $|t A_2|_{\mathrm{Rips}}
= |t A_1|_{\mathrm{Rips}}+ (1-e^{-(a_2-a_1)t})$,
$|t A_3|_{\mathrm{Rips}} = |t A_2|_{\mathrm{Rips}} + (1-e^{-(a_3-a_2)t})$,
and so on. In other words, adding a point at the end increases the Rips
magnitude by $1-e^{-dt}$, where $d$ is the distance of the new
end point from the old one.  So if $d$ is very large, we increase
the Rips magnitude by almost $1$, whereas if $d$ is very small, then
we increase the Rips magnitude only a tiny amount.

\begin{proof}
	Given $B\subseteq A$, let $B_\mathrm{max}$ and $B_\mathrm{min}$ denote
	the maximum and minimum elements of $B$ respectively.
	Given $a\leq a'$ in $A$, let 
	$\mathcal{B}_{a,a'}$ denote the set of $B\subseteq A$
	for which $B_\mathrm{min}=a$ and $B_\mathrm{max}=a'$,
	and note that $\diam(B)=a'-a$ for all $B\in\mathcal{B}_{a,a'}$.
	Thus equation~\eqref{subsets-formula} gives us
	\[
		|t A|_{\mathrm{Rips}} = 
		\sum_{a\leq a'}
		e^{-(a'-a)t}
		\sum_{B\in\mathcal{B}_{a,a'}}
		(-1)^{\#B-1}.
	\]
	Now note the following:
	\begin{itemize}
		\item
		If $a=a'$, then $\mathcal{B}_{a,a'}$ consists
		of $\{a\}$ alone and 
		$\sum_{B\in\mathcal{B}_{a,a'}}(-1)^{\#B-1}=1$.

		\item
		If $a$ and $a'$ are adjacent elements of $A$,
		then $\mathcal{B}_{a,a'}$ consists of $\{a,a'\}$ alone
		and $\sum_{B\in\mathcal{B}_{a,a'}}(-1)^{\#B-1}=-1$.

		\item
		If $a$ and $a'$ are non-adjacent elements of $A$,
		then let $A_{a,a'}$ denote the set of those elements
		of $A$ that lie strictly between $a$ and $a'$.
		Then any $B\in\mathcal{B}_{a,a'}$ is the disjoint union
		of $\{a,a'\}$ with a subset $C\subseteq A_{a,a'}$.
		Thus
		$\sum_{B\in\mathcal{B}_{a,a'}}
		(-1)^{\#B-1}
		=
		\sum_{C\subseteq A_{a,a'}}(-1)^{\#C+1}
		=
		-\sum_{C\subseteq A_{a,a'}}(-1)^{\#C}
		=0$.
	\end{itemize}
	We therefore have
	\begin{align*}
		|t A|_{\mathrm{Rips}} 
		&= 
		\sum_{a}
		e^{-(a-a)t}\cdot 1
		+
		\sum_{\substack{a<a'\\ \text{adjacent}}}
		e^{-(a'-a)t}\cdot (-1)
		+	
		\sum_{\substack{a<a'\\ \text{non-adjacent}}}
		e^{-(a'-a)t}\cdot 0
		\\
		&= 
		n
		-
		\sum_{j=1}^{n-1}
		e^{-(a_{j+1}-a_j)t}
	\end{align*}
	as required.
\end{proof}

\section{Rips magnitude of cycle graphs and Euclidean cycles}
\label{section-cycles}

Other than the cases treated in the previous section, the topology of Rips complexes seems to be understood at all scales only in the case of finite subsets of the circle.  See ~\cite{adamaszek2017vietoris} and~\cite{AAFPPJ}.
In the case of Riemannian manifolds, Rips complexes are well understood at small scales thanks to a result of Hausmann~\cite[Theorem~3.5]{Hausmann}.
For ellipses, the Rips complex has been studied 
at a range of length scales in~\cite{AARellipses}.
Recently, it has also been shown that Rips complexes can be understood as nerves of certain covers via Dowker duality~\cite{Virk}.

In this section we focus on the circle $S^1$, which is equipped either with the Euclidean metric obtained from the standard embedding into $\R^2$, and denoted $S^1_\mathrm{eucl}$, or the geodesic (arclength) metric of total length $2\pi$, and denoted $S^1_\mathrm{geo}$.  

We will examine the subsets of equally spaced points in these spaces, whose corresponding Rips filtrations are well understood \cite{adamaszek}. Let $C_n^\mathrm{eucl}$ be the subset of $n$ equidistant points in $S^1_\mathrm{eucl}$ and let $C_n^\mathrm{geo}$ be the subset of $n$ equidistant points in $S^1_{geo}$.

Both of these can be related to cycle graphs. Let $C_n$ be the set of vertices of the $n$-cycle graph, equipped with the graph metric, where two adjacent vertices are considered to be at a distance of $1$. Note that this can be described as the subset of $n$ equidistant points in a geodesic circle of total arclength $n$. The Rips filtration of $C_n$ was studied in \cite{adamaszek} and the Rips filtrations of $C_n^\mathrm{geo}$ and $C_n^\mathrm{eucl}$ are just reparameterised versions of it.

More precisely, extend the function $[0,2]\to[0,\pi]$ given by $r\mapsto2\arcsin\frac{r}{2}$ to a homeomorphism $\phi\colon \R\to\R$. Then, by elementary trigonometry, we have the following relations:
\begin{equation}\label{reparameterised}
\calR_r(C_n^\mathrm{geo})=\calR_{\frac{n}{2\pi} r}(C_n),\qquad\calR_r(C_n^\mathrm{eucl})=\calR_{\frac{n}{2\pi}\phi(r)}(C_n)
\end{equation}
and
\begin{equation}\label{euclvsgeo}
\quad\calR_r(C_n^\mathrm{eucl})=\calR_{\phi(r)}(C_n^\mathrm{geo}).
\end{equation}

We now state the main results of this section and then proceed to prove them.

\begin{proposition}\label{ripsmag_cycle}
Writing $q=e^{-t}$, we have:
\[
|t C_n|_{\mathrm{Rips}}=\sum_{\substack{\text{odd }r|n\\r\neq n}}\frac nr q^{\frac nr \frac{r-1}2}(1-q)+q^{\lfloor\frac n2\rfloor}.
\]
\end{proposition}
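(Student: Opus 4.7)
The plan is to evaluate the Rips magnitude via the filtered Euler characteristic formula \eqref{euler-formula}. Since $C_n$ has graph-metric distances in $\{0,1,2,\ldots,\lfloor n/2\rfloor\}$, writing $q = e^{-t}$ and $D = \lfloor n/2\rfloor$ gives
\[
|tC_n|_{\mathrm{Rips}} = \sum_{j=0}^{D-1} \chi(\calR_j(C_n))\,(q^j - q^{j+1}) + \chi(\calR_D(C_n))\, q^D.
\]
The complex $\calR_D(C_n)$ is the full simplex on $n$ vertices and so is contractible; its Euler characteristic of $1$ contributes the closing term $q^{\lfloor n/2\rfloor}$ in the target formula. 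It therefore suffices to identify $\chi(\calR_j(C_n))$ in the remaining range $0 \leq j < n/2$.

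For this I would invoke Adamaszek's homotopy-type classification from~\cite{adamaszek}, which states that for $0 \le j < n/2$,
\[
\calR_j(C_n) \simeq
\begin{cases}
\bigvee^{n-2j-1} S^{2l}, & \text{if } j/n = l/(2l+1),\\
S^{2l+1}, & \text{if } l/(2l+1) < j/n < (l+1)/(2l+3).
\end{cases}
\]
The odd-sphere (generic) case kills the Euler characteristic, so only the exceptional values contribute. Setting $r = 2l+1$, the equation $j(2l+1) = nl$ becomes $j = \tfrac{n(r-1)}{2r}$, which forces $r$ to be an odd divisor of $n$; a short calculation then gives $\chi(\calR_j(C_n)) = 1 + (n-2j-1) = n - 2j = n/r$.

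Substituting back into the telescoping sum, the only nonzero contributions among $0 \le j < D$ come from $j_r = \tfrac{n(r-1)}{2r}$ as $r$ ranges over the odd divisors of $n$ with $r \neq n$, each producing $\tfrac{n}{r}(q^{j_r} - q^{j_r+1}) = \tfrac{n}{r} q^{\frac{n}{r}\cdot\frac{r-1}{2}}(1-q)$. Combined with the $q^D$ from the full simplex, this yields exactly the right-hand side of the proposition. The only subtle point is the exclusion of $r = n$ (relevant when $n$ is odd): naively plugging it into the Adamaszek pattern produces $j = D$ with coefficient $1$, but this case is outside the range $0 \le j < n/2$ of Adamaszek's theorem and is already accounted for by the full-simplex term $q^D$; I would expect this boundary bookkeeping to be the main technical wrinkle, while the rest reduces to reindexing the exceptional locus in terms of odd divisors of $n$.
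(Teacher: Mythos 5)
Your proposal is correct and follows essentially the same route as the paper: evaluate the filtered Euler characteristic formula \eqref{euler-formula} at the integer filtration values, use Adamaszek's homotopy-type classification to see that only the exceptional stages (reindexed by odd divisors $r\mid n$) have nonzero Euler characteristic $n-2j = n/r$, and account for the diameter stage (the full simplex) separately. Your handling of the boundary case $r=n$ matches the paper's bookkeeping, which folds it into the $q^{\lfloor n/2\rfloor}$ term.
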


This is reminiscent of certain functions that appear in analytic number theory, the simplest of which is probably the sum of divisors function $\sigma_k(n)=\sum_{d|n}d^k$. The appearance of sums over divisors of integers is quite surprising to us and seems to suggest that the Rips magnitudes of $n$-cycle graphs might be intimately connected to number theory in some way. We feel this connection could be worthy of further study:

\begin{question}
What is the connection between Rips magnitude of cycles and various functions studied in analytic number theory?
\end{question}

Using Lemma \ref{lemma-homeo} and equations \eqref{reparameterised} and \eqref{euclvsgeo}, Proposition \ref{ripsmag_cycle} immediately allows us to infer the following two corollaries:

\begin{corollary}\label{ripsmag_eucl}
The Rips magnitude of Euclidean cycles is given by\footnote{In fact, $\delta_n=\delta_{n,n}$ for odd $n$, so the condition $r\neq n$ can be omitted for Euclidean cycles.}:
\[
|t C^{\mathrm{eucl}}_n|_{\mathrm{Rips}}=\sum_{\substack{\text{odd }r|n\\r\neq n}}\frac nr (e^{-\delta_r t}-e^{-\delta_{r,n}t})+e^{-\delta_n t},
\]
where
\[
\delta_r=\diam(C^{\mathrm{eucl}}_r)=2\sin\left(\pi\frac{\lfloor\frac r2\rfloor}r\right)\quad\text{and}\quad\delta_{r,n}=2\sin\left(\pi\left(\frac1n+\frac{\lfloor\frac r2\rfloor}r\right)\right).
\]
\end{corollary}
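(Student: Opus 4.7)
The plan is to expand Proposition \ref{ripsmag_cycle} into a sum of pure exponentials, identify the explicit reparameterisation relating $C_n$ to $C_n^{\mathrm{eucl}}$ coming from \eqref{reparameterised}, and then invoke Lemma \ref{lemma-homeo}. First, writing $q=e^{-t}$ and noting $q^a(1-q) = e^{-at} - e^{-(a+1)t}$, Proposition \ref{ripsmag_cycle} takes the equivalent form
\[
|tC_n|_{\mathrm{Rips}} = \sum_{\substack{\text{odd }r|n\\r\neq n}} \frac{n}{r}\left(e^{-\frac{n(r-1)}{2r}t} - e^{-\bigl(\frac{n(r-1)}{2r}+1\bigr)t}\right) + e^{-\lfloor n/2 \rfloor t},
\]
which exhibits the coefficients $\lambda_i$ and exponents $r_i$ on which Lemma \ref{lemma-homeo} will act.

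Next I would identify the reparameterisation. From \eqref{reparameterised} the persistent homology of $\calR(C_n^{\mathrm{eucl}})$ is obtained from that of $\calR(C_n)$ by precomposing with $g(r) = \frac{n}{2\pi}\phi(r)$, which in the notation of Remark \ref{remark-homeo} means $H_\ast(\calR(C_n^{\mathrm{eucl}})) = h H_\ast(\calR(C_n))$ where $h = g^{-1}$. On $[0,2]$ we have the explicit form $g(r) = \frac{n}{\pi}\arcsin(r/2)$, so $h(s) = 2\sin(\pi s/n)$ for $s \in [0,n/2]$. A quick check shows that all of $\frac{n(r-1)}{2r}$, $\frac{n(r-1)}{2r}+1$, and $\lfloor n/2 \rfloor$ in fact lie in $[0, n/2]$: since $r \mid n$ and $r<n$ force $n \geq 2r$, we have $\frac{n(r-1)}{2r}+1 = \frac{n}{2} - \frac{n}{2r} + 1 \leq \frac{n}{2}$, so the specific extension of $\phi$ outside $[0,2]$ is immaterial.

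Applying Lemma \ref{lemma-homeo} to the expansion above, each exponent $r_i$ is replaced by $h(r_i)$, and a direct trigonometric computation (using $\lfloor r/2 \rfloor = (r-1)/2$ for odd $r$) yields
\[
h\bigl(\tfrac{n(r-1)}{2r}\bigr) = 2\sin\bigl(\tfrac{\pi(r-1)}{2r}\bigr) = \delta_r, \qquad h\bigl(\tfrac{n(r-1)}{2r}+1\bigr) = 2\sin\bigl(\pi\bigl(\tfrac{1}{n}+\tfrac{\lfloor r/2 \rfloor}{r}\bigr)\bigr) = \delta_{r,n},
\]
together with $h(\lfloor n/2 \rfloor) = 2\sin(\pi \lfloor n/2 \rfloor / n) = \delta_n$. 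Substituting into the expanded formula produces exactly the claimed identity. The proof is essentially algebraic bookkeeping combined with Lemma \ref{lemma-homeo}; the only mildly delicate point is the domain check performed in the previous paragraph, after which the rest is pure trigonometry.
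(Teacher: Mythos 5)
Your argument is correct and follows exactly the route the paper intends: the paper derives this corollary directly from Proposition~\ref{ripsmag_cycle} via the reparameterisation \eqref{reparameterised} and Lemma~\ref{lemma-homeo}, which is precisely what you do. Your write-up simply makes explicit the details the paper leaves as "immediate" (the exponential expansion, the identification $h=g^{-1}$ with $h(s)=2\sin(\pi s/n)$ on $[0,n/2]$, and the check that all relevant exponents lie in that interval so the choice of extension of $\phi$ is irrelevant), all of which are verified correctly.
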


\begin{corollary}\label{ripsmag_geo}
The Rips magnitude of geodesic cycles is given by:
\[
|t C^{\mathrm{geo}}_n|_{\mathrm{Rips}}=\sum_{\substack{\text{odd }r|n\\r\neq n}}\frac nr (e^{-\eta_r t}-e^{-\eta_{r,n}t})+e^{-\eta_n t},
\]
where
\[
\eta_r=2\pi\frac{\lfloor\frac r2\rfloor}r\qquad\text{and}\qquad\eta_{r,n}=2\pi\left(\frac1n+\frac{\lfloor\frac r2\rfloor}r\right).
\]
\end{corollary}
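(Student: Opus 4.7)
The plan is to transport the formula of Proposition~\ref{ripsmag_cycle} across the reparameterisation~\eqref{reparameterised} using Lemma~\ref{lemma-homeo}. Equation~\eqref{reparameterised} states that $\calR_r(C_n^{\mathrm{geo}}) = \calR_{(n/(2\pi))r}(C_n)$, which exhibits the graded persistence module $H_\ast(\calR(C_n^{\mathrm{geo}}))$ as the reparameterisation of $H_\ast(\calR(C_n))$ by the orientation preserving homeomorphism $h\colon\R\to\R$, $h(s) = (2\pi/n)s$. Applied termwise to each graded component and combined in the alternating sum defining the magnitude, Lemma~\ref{lemma-homeo} then replaces each exponent $r_i$ in the magnitude function of $C_n$ by $h(r_i) = (2\pi/n) r_i$.

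Concretely, I would first rewrite Proposition~\ref{ripsmag_cycle} in ``bar form''. With $q = e^{-t}$, each summand $\tfrac{n}{r} q^{(n/r)(r-1)/2}(1-q)$ unfolds to $\tfrac{n}{r}\bigl(e^{-\alpha_r t} - e^{-(\alpha_r+1)t}\bigr)$ where $\alpha_r = \tfrac{n}{r}\cdot\tfrac{r-1}{2}$, while the tail $q^{\lfloor n/2\rfloor}$ is the contribution $e^{-\lfloor n/2\rfloor t}$ of an infinite bar $[\lfloor n/2\rfloor,\infty)$. Applying $h$ termwise, and using that $r$ is odd so $\lfloor r/2\rfloor = (r-1)/2$, one computes
\[
h(\alpha_r) = \tfrac{2\pi}{n}\cdot\tfrac{n}{r}\cdot\tfrac{r-1}{2} = 2\pi\tfrac{\lfloor r/2\rfloor}{r} = \eta_r, \qquad h(\alpha_r+1) = \eta_r + \tfrac{2\pi}{n} = \eta_{r,n},
\]
and $h(\lfloor n/2\rfloor) = 2\pi\lfloor n/2\rfloor/n = \eta_n$. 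Substituting these images back into the bar form of Proposition~\ref{ripsmag_cycle} produces precisely the claimed expression for $|tC_n^{\mathrm{geo}}|_{\mathrm{Rips}}$.

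The main obstacle is purely bookkeeping: tracking which exponent comes from which endpoint of which bar, and matching the results against the definitions of $\eta_r$, $\eta_{r,n}$, and $\eta_n$. Unlike the Euclidean corollary, where the reparameterising homeomorphism involves the nonlinear map $\phi$ with $\phi(r) = 2\arcsin(r/2)$ on $[0,2]$ and one must additionally check that the endpoints lie in the range where $\phi$ agrees with the arcsine formula, here $h$ is a genuine positive rescaling of $\R$, so Lemma~\ref{lemma-homeo} applies transparently and no analytic input is required beyond it.
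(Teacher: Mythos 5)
Your proposal is correct and follows exactly the paper's route: the published proof likewise derives Corollary~\ref{ripsmag_geo} by combining Proposition~\ref{ripsmag_cycle} with equation~\eqref{reparameterised} and Lemma~\ref{lemma-homeo}, the only difference being that you spell out the bookkeeping (the bar form of Proposition~\ref{ripsmag_cycle} and the images $h(\alpha_r)=\eta_r$, $h(\alpha_r+1)=\eta_{r,n}$, $h(\lfloor n/2\rfloor)=\eta_n$) that the paper leaves implicit.
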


\begin{example}
The graph of $|t C^{\mathrm{eucl}}_{60}|_{\mathrm{Rips}}$
\begin{center}
\includegraphics[width=200pt]{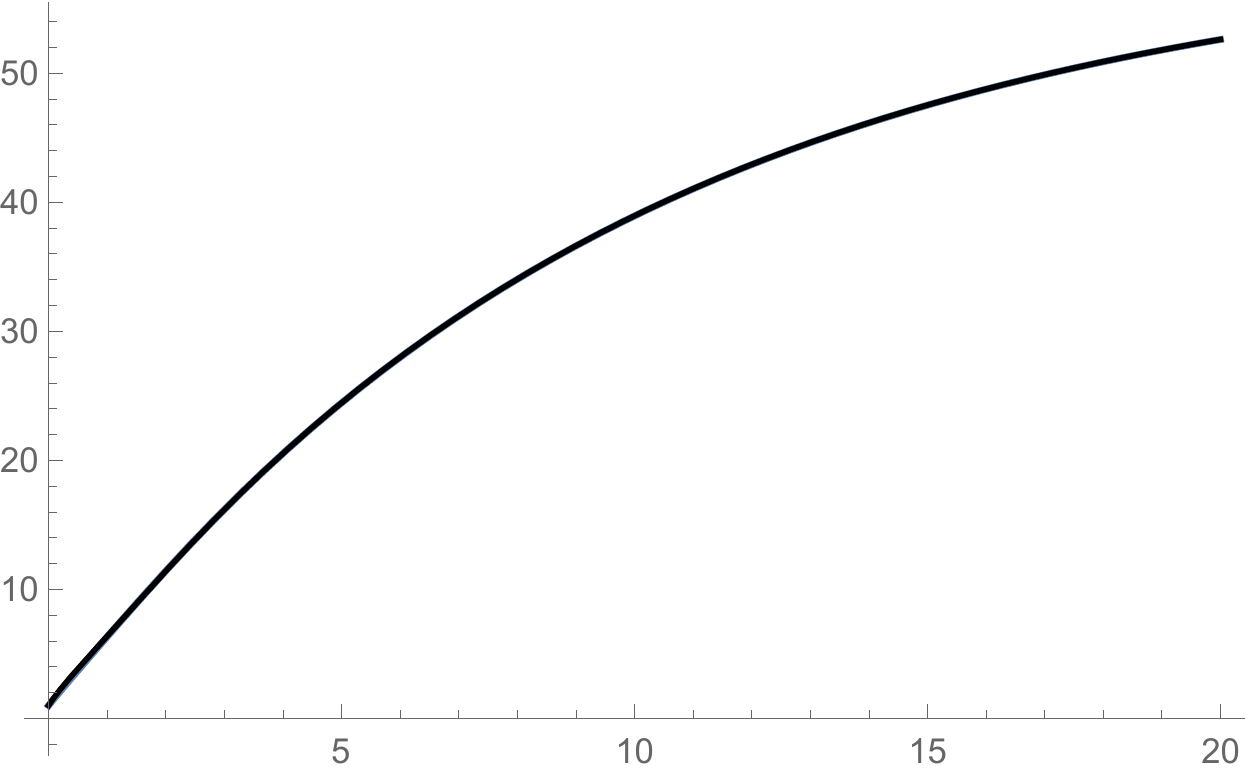}
\end{center}
looks deceptively similar to the one for $|t C^{\mathrm{geo}}_{60}|_{\mathrm{Rips}}$
\begin{center}
\includegraphics[width=200pt]{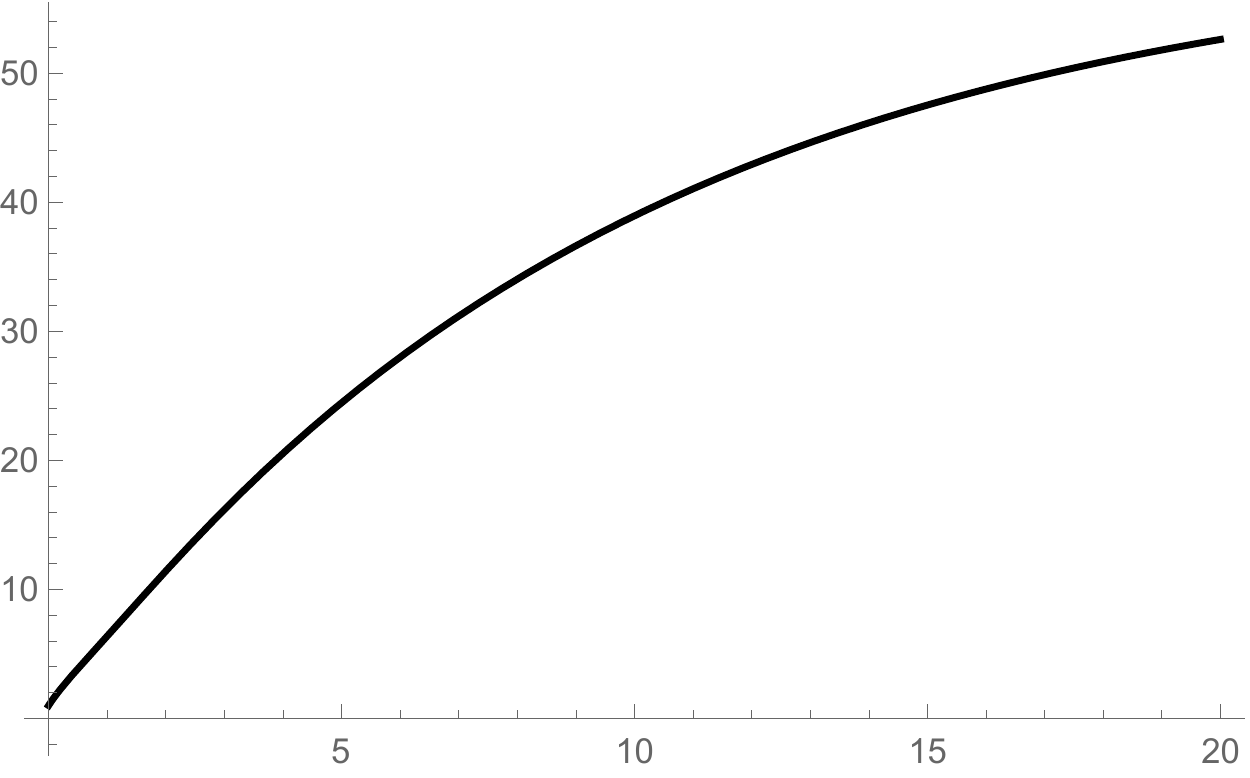}
\end{center}
so it is perhaps more instructive to look at the difference $|t C^{\mathrm{geo}}_{60}|_{\mathrm{Rips}}-|t C^{\mathrm{eucl}}_{60}|_{\mathrm{Rips}}$:
\begin{center}
\includegraphics[width=200pt]{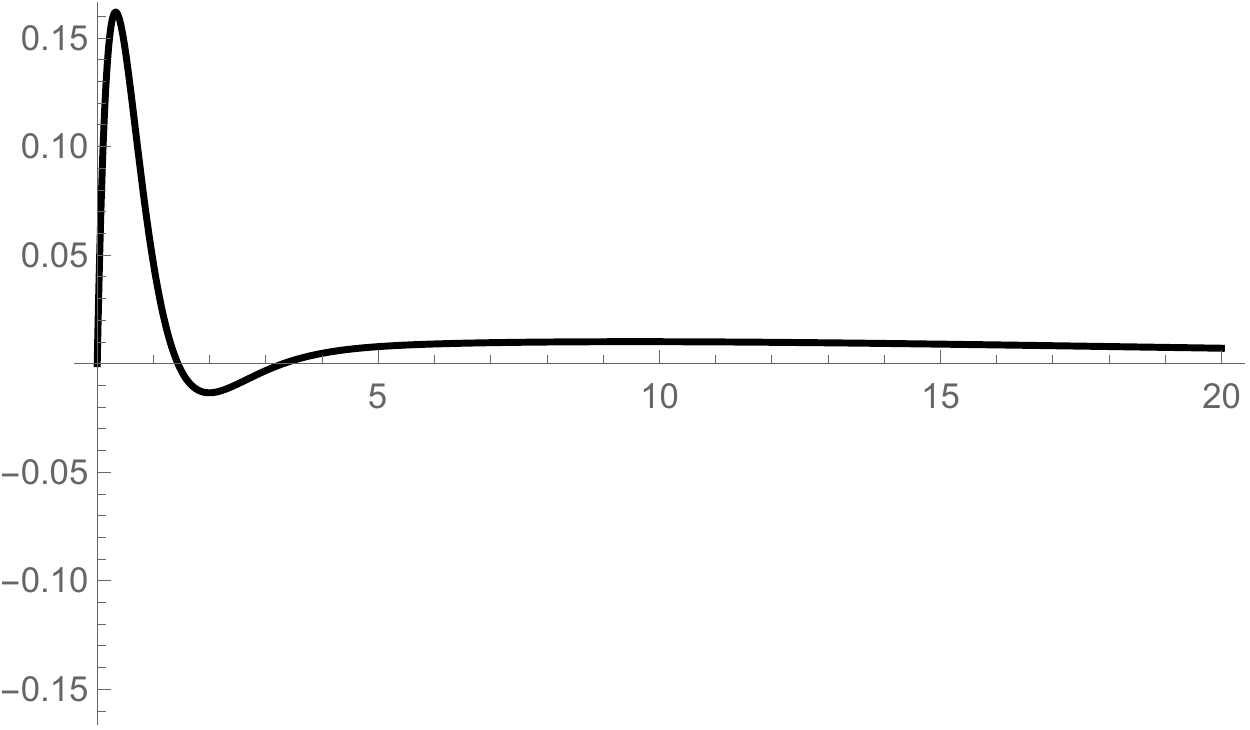}
\end{center}
\end{example}

\begin{proof}[Proof of Proposition \ref{ripsmag_cycle}]
Note that the Rips filtration of $C_n$ only has ``jumps'' at the integers. More precisely:
\[
\calR_r(C_n)=\calR_{\lfloor r\rfloor}(C_n).
\]
So it is sufficient to understand the Rips filtration at integer values of the filtration parameter $r$. For integer $r$ such that $0\leq r<\frac{n}2$, Adamaszek \cite[Corollary 6.7]{adamaszek} gives the following description of the homotopy types of the various stages of the Rips filtration:
\[
\calR_r(C_n)\simeq\begin{cases}\bigvee_{n-2r-1}S^{2l};&r=\frac l{2l+1}n,\\
S^{2l+1};&\frac l{2l+1}n<r<\frac{l+1}{2l+3}n.
\end{cases}
\]
From this we can immediately infer the Euler characteristics:
\[
\chi(\calR_r(C_n))=\begin{cases}n-2r;&\text{if $\frac{n}{n-2r}$ is an odd integer,}\\
1;&\text{if $n=2r$,}\\
0;&\text{otherwise.}
\end{cases}
\]
Using \eqref{euler-formula}, this implies
\begin{align*}
	|t C_n|_{\mathrm{Rips}}
	&=\sum_{r=0}^{\lfloor\frac{n}2\rfloor-1}\chi(\calR_r(X))(e^{-r t}-e^{-(r+1) t})+\chi(\calR_{\lfloor\frac{n}2\rfloor}(X))e^{-(\lfloor\frac{n}2\rfloor) t})
	\\
	&=\sum_{\substack{\text{odd }d|n\\d\neq n}}\frac nd q^{\frac nd \frac{d-1}2}(1-q)+q^{\lfloor\frac n2\rfloor}
\end{align*}
where the indices in the first and second summations are related by $\frac{n}{n-2r}=d$.
The claim follows.
\end{proof}

\begin{remark}
Another way to calculate the Euler characteristic of $\calR_r(C_n)$ would be from the simplex counts. We have computational evidence that the number of $i$-simplices in $\calR_r(C_n)$ for $r<\diam C_n$ is given by\footnote{Here, we use the \textbf{nonstandard (!)} convention that $\binom{n}{k}=0$ for $n<0$.}
\[
N_{n,r,i}=\sum_{k=0}^{\lfloor\frac n2\rfloor}\frac n{2k+1}\binom{(2k+1)r-kn+2k}{2k}\binom{(2k+1)r-kn}{i-2k}.
\]
\end{remark}

\begin{remark}
Computational evidence seems to suggest that the Rips magnitude $|t C_n|_{\mathrm{Rips}}$ of a cycle graph is convex if and only if
\[
n\in\{1, 2, 3, 4, 5, 6, 9, 10, 12, 15, 18, 21, 24, 27, 30, 33, 36, 39, 42, 48, 51, 57\}.
\]
\end{remark}

\section{Rips magnitude of infinite metric spaces}
\label{section-infinite}

We will now try to understand the Rips magnitude of infinite metric spaces.
In the original setting of magnitude,
positive definite spaces $X$, such as subspaces of Euclidean space,
have the property that if $A\subseteq B\subseteq X$ with $A,B$ finite,
then $|A|\leq |B|$.
Thus one definition of the
magnitude of an infinite metric space $X$ is
\[
	|X| = \sup_{\substack{A\subseteq X\\ A\text{ finite}}}|A|.
\]
Proceeding by analogy, we could attempt to make the definition
\[
	|X|_{\mathrm{Rips}} = \sup_{\substack{A\subseteq X\\ A\text{ finite}}}|A|_{\mathrm{Rips}}
\]
or the stronger definition
\[
	|X|_{\mathrm{Rips}} = \lim_{\substack{A\to X\\ A\text{ finite}}}|A|_{\mathrm{Rips}},
\]
where $A\to X$ in the Hausdorff metric.
The issues here are whether the supremum above exists, how one computes its
actual values, and the corresponding questions for the limit.

In this section we will explore the above situation in the case of the
unit interval, the circle with its Euclidean metric, and the circle
with its geodesic metric.  In the case of the interval the situation is as
good as one can hope for, with the conclusion that the Rips magnitude
of the interval `is' the function $t\mapsto 1+t$.
In the circle cases the situation is more ambiguous, and while the supremum
above may well exist (the supremum taken over all \emph{equally spaced}
subsets certainly does), the limit does not.

To improve readability, we state the results for each of the cases treated in a
separate subsection, while deferring all the proofs to one final subsection.

\subsection{The unit interval}

First  let us consider the unit interval.
Here we can give the following definitive description
of the situation.

\begin{theorem}\label{theorem-interval}
	Let $I=[0,1]$ denote the unit interval.
	\begin{enumerate}
		\item
		If $A,B\subseteq I$ are finite subsets with $A\subseteq B$,
		then
		\[
			|A|_{\mathrm{Rips}}\leq |B|_{\mathrm{Rips}}.
		\]

		\item
		For any $t\in (0,\infty)$ we have
		\[
			\sup_{\substack{A\subseteq I\\\text{finite}}}|t A|_{\mathrm{Rips}}
			= 1+t
		\]
		 and indeed
		\[
			\lim_{\substack{A\to I\\A\;\text{finite}}}|t A|_{\mathrm{Rips}}
			=
			1+t
		\]
		with uniform convergence on compact subsets of $(0,\infty)$.
	\end{enumerate}
\end{theorem}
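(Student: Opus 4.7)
The plan is to reduce everything to the concrete formula from Proposition~\ref{proposition-subsets-of-R}. For any finite $A = \{a_1 < a_2 < \cdots < a_n\} \subseteq I$, let $d_j = a_{j+1} - a_j$ be the consecutive gaps, so that
\[
    |tA|_{\mathrm{Rips}} = n - \sum_{j=1}^{n-1} e^{-d_j t} = 1 + \sum_{j=1}^{n-1}\bigl(1 - e^{-d_j t}\bigr),
\]
and note that $\sum_j d_j = a_n - a_1 \leq 1$. Every claim in the theorem follows by analysing this expression.

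For part (1), it suffices to handle the case where $B = A \cup \{c\}$ for a single $c \notin A$. If $c < a_1$ or $c > a_n$, adding $c$ creates one new gap $d$ and bumps $n$ by one, changing the magnitude by $1 - e^{-dt} \geq 0$. If $c$ lies in some interior gap, splitting $d_j$ into $d' + d'' = d_j$, then setting $x = e^{-d't}$ and $y = e^{-d''t}$ in $(0,1]$, the change in magnitude equals
\[
    (1 - e^{-d't}) + (1 - e^{-d''t}) - (1 - e^{-d_j t}) = (1-x) + (1-y) - (1-xy) = (1-x)(1-y) \geq 0.
\]
Chaining these one-point insertions gives monotonicity in general.

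For part (2), the upper bound is immediate from $1 - e^{-x} \leq x$ applied termwise:
\[
    |tA|_{\mathrm{Rips}} \leq 1 + t\sum_{j=1}^{n-1} d_j \leq 1 + t.
\]
To see that this is attained in the limit, take $A_n = \{0, 1/n, \ldots, 1\}$, for which $|t A_n|_{\mathrm{Rips}} = 1 + n(1 - e^{-t/n}) \to 1 + t$ as $n \to \infty$. Together with part (1) this identifies the supremum with $1+t$.

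The main work is the Hausdorff convergence. Using the two-sided bound $x - x^2/2 \leq 1 - e^{-x} \leq x$ for $x \geq 0$, write
\[
    |tA|_{\mathrm{Rips}} - (1+t) = -t\bigl(a_1 + (1 - a_n)\bigr) - \sum_{j=1}^{n-1} R_j,
\]
where $0 \leq R_j = t d_j - (1 - e^{-d_j t}) \leq \tfrac{1}{2}t^2 d_j^2$. Setting $\delta = \max_j d_j$ and using $\sum d_j \leq 1$ gives
\[
    \Bigl| |tA|_{\mathrm{Rips}} - (1+t) \Bigr| \leq t\bigl(a_1 + (1 - a_n)\bigr) + \tfrac{1}{2} t^2 \delta.
\]
Both $a_1$, $1 - a_n$ and $\delta/2$ are bounded above by the Hausdorff distance $d_H(A, I)$ (the last by noting that the midpoint of any gap is at Hausdorff distance $d_j/2$ from $A$). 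Hence as $A \to I$ in Hausdorff distance, the right-hand side goes to zero, uniformly for $t$ in any compact subset of $(0, \infty)$.

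The only subtle step is the last one: getting the \emph{uniform} convergence on compact sets requires the quadratic remainder estimate, not merely $1 - e^{-x} \leq x$, because we need the error to be controlled by $\delta$ rather than just by the trivial bound. Everything else is elementary algebra on the explicit formula.
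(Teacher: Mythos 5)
Your proof is correct and follows essentially the same route as the paper: part (1) by single-point insertion using the identity $1+e^{-(x+y)}-e^{-x}-e^{-y}=(1-e^{-x})(1-e^{-y})\geq 0$, and part (2) by second-order control of $1-e^{-x}$ with the error bounded in terms of the endpoint gaps and the maximal interior gap, all of which are controlled by $d_H(A,I)$. The only cosmetic differences are that the paper packages the quadratic estimate as bounds on $f(0)$, $f'(0)$ and $f''$ for $f(t)=(1+t)-|tA|_{\mathrm{Rips}}$ rather than a termwise Taylor bound, and obtains the supremum directly from the limit statement instead of via your explicit equally spaced sequence.
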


Thus it seems reasonable in this situation to declare the Rips
magnitude of $I=[0,1]$ to be the function $|t[0,1]|_{\mathrm{Rips}}=1+t$.

\subsection{Euclidean Circle}

One possible approach to try and make sense of the notion of ``Rips magnitude of the Euclidean circle'' is by studying the behaviour of $|t C^{\mathrm{eucl}}_n|_{\mathrm{Rips}}$ as $n\to\infty$. 
As it turns out, however, this behaviour is not as straightforward as one might hope.

For instance, we show that $|t C^{\mathrm{eucl}}_n|_{\mathrm{Rips}}$ does not converge as $n\to\infty$, despite the fact that the Hausdorff distance $d_H(C^{\mathrm{eucl}}_n,S^1_{\mathrm{eucl}})=2\sin\left(\frac{\pi}{2n}\right)$ converges to $0$ as $n\to\infty$. We show this by first studying the behaviour of sequences of the form $|t C^{\mathrm{eucl}}_{mp}|_{\mathrm{Rips}}$, for fixed $m\in\N$ and where $p$ runs through all primes,
and showing that the limit along each such subsequence exists. However, these limits are different for different values of $m$.

We then show that despite this inconsistency, the $|t C^{\mathrm{eucl}}_n|_{\mathrm{Rips}}$ has finite upper and lower limits ($\liminf$ and $\limsup$) as $n\to\infty$ which can be expressed explicitly. These could be considered to be the `upper' and `lower Rips magnitude'. We also show that the `upper Rips magnitude' is equal to $\sup_{n\in\N}|t C^{\mathrm{eucl}}_n|_{\mathrm{Rips}}$. (Compare this with the first proposed definition of Rips magnitude at the beginning of this section.)

\begin{theorem}\label{theorem-EC-lim}
For any $m\in\N$:
\[
\lim_{\substack{p\to\infty\\p\text{ prime}}}|t C^{\mathrm{eucl}}_{mp}|_{\mathrm{Rips}}=e^{-2t}+2\pi t\sum_{\text{odd }r|m}\frac1r e^{-2t\cos\left(\frac{\pi}{2r}\right)}\sin\left(\frac{\pi}{2r}\right)
\]
\end{theorem}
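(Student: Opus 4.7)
The plan is to start from the closed-form expression for $|tC^{\mathrm{eucl}}_{mp}|_{\mathrm{Rips}}$ furnished by Corollary \ref{ripsmag_eucl}, and let $p \to \infty$ through primes, isolating the leading-order contribution termwise. For all but finitely many primes $p$, we have $p$ odd and $p \nmid m$, in which case the odd positive divisors $r$ of $mp$ with $r \neq mp$ split naturally into two finite families indexed by the odd divisors of $m$: \emph{(i)} $r = r'$ with $r' \mid m$ odd, and \emph{(ii)} $r = pr'$ with $r' \mid m$ odd and $r' < m$. In addition I would keep track of the residual term $e^{-\delta_{mp}t}$ appearing in the formula.

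For family (i), $\delta_{r'} = 2\sin(\pi(r'-1)/(2r')) = 2\cos(\pi/(2r'))$ is independent of $p$. The key step is a first-order expansion: writing $\delta_{r',mp} = 2\sin(\pi(r'-1)/(2r') + \pi/(mp))$ and applying the angle-addition formula together with $\sin(\pi(r'-1)/(2r')) = \cos(\pi/(2r'))$, $\cos(\pi(r'-1)/(2r')) = \sin(\pi/(2r'))$, one obtains
\[
\delta_{r',mp} - \delta_{r'} = 2\sin(\pi/(2r'))\sin(\pi/(mp)) + 2\cos(\pi/(2r'))(\cos(\pi/(mp))-1) = \frac{2\pi}{mp}\sin(\pi/(2r')) + O(1/p^2).
\]
Combining this with $e^{-\delta_{r'}t} - e^{-\delta_{r',mp}t} = t\,e^{-\delta_{r'}t}(\delta_{r',mp}-\delta_{r'}) + O(1/p^2)$ and multiplying by $mp/r'$ yields
\[
\frac{mp}{r'}\bigl(e^{-\delta_{r'}t} - e^{-\delta_{r',mp}t}\bigr) \longrightarrow \frac{2\pi t}{r'}\,e^{-2t\cos(\pi/(2r'))}\sin(\pi/(2r')),
\]
and summing over the odd divisors $r'$ of $m$ produces the sum in the statement.

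For family (ii), both $\delta_{pr'} = 2\cos(\pi/(2pr'))$ and $\delta_{pr',mp} = 2\cos(\pi/(2pr') - \pi/(mp))$ differ from $2$ by $O(1/p^2)$, and the same estimate applies to their mutual difference. Since the coefficient $m/r'$ is bounded independently of $p$, each term of family (ii) is $O(1/p^2)$ and the whole family contributes $0$ in the limit. Finally, the residual term $e^{-\delta_{mp}t}$ equals $e^{-2t}$ exactly when $m$ is even (for then $\delta_{mp} = 2$), and converges to $e^{-2t}$ when $m$ is odd (where $\delta_{mp} = 2\cos(\pi/(2mp)) \to 2$). Assembling the three contributions gives the stated identity.

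The point requiring care is the cancellation in family (i): the coefficient $mp/r'$ diverges while the bracket $e^{-\delta_{r'}t} - e^{-\delta_{r',mp}t}$ collapses to $0$, so one must identify the exact leading-order term of the Taylor expansion rather than merely bounding it. Beyond this, no uniformity issues arise because the divisor sum is finite and fixed, so termwise limits suffice throughout.
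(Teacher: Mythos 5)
Your proposal is correct and follows essentially the same route as the paper's proof: after invoking Corollary~\ref{ripsmag_eucl}, you split the odd divisors of $mp$ (for $p$ large, $p\nmid m$) into those dividing $m$ and those of the form $pr'$, extract the leading term of the divergent-coefficient family via a first-order expansion of $\delta_{r',mp}-\delta_{r'}$ (the paper phrases this as the derivative of $x\mapsto e^{-\phi(x)t}$ at $0$, which is the same computation), and show the remaining family and the residual term contribute $0$ and $e^{-2t}$ respectively. No gaps.
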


This result means in particular that
\[
\lim_{n\to\infty}|t C^{\mathrm{eucl}}_n|_{\mathrm{Rips}}
\]
cannot exist. For instance, we have
\[
	\lim_{\substack{p\to\infty\\p\text{ prime}}}|t C^{\mathrm{eucl}}_p|_{\mathrm{Rips}}
	=e^{-2t}+2\pi t
\]
but
\[
	\lim_{\substack{p\to\infty\\p\text{ prime}}}|t C^{\mathrm{eucl}}_{3p}|_{\mathrm{Rips}}
	=
	e^{-2t}+2\pi t+\frac{\pi t}3 e^{-\sqrt{3}t}
\]
and the two limits do not coincide.

\begin{theorem}
\label{theorem-lim-sup-inf}
The Rips magnitudes of Euclidean cycles $C^{\mathrm{eucl}}_n$ satisfy
\[
\liminf_{n\to\infty}|t C^{\mathrm{eucl}}_n|_{\mathrm{Rips}}=e^{-2t}+2\pi t
\]
and
\[
\limsup_{n\to\infty}|t C^{\mathrm{eucl}}_n|_{\mathrm{Rips}}=e^{-2t}+2\pi t\sum_{r\text{ odd}}\frac1r e^{-2t\cos\left(\frac\pi{2r}\right)}\sin\left(\frac\pi{2r}\right).
\]
This series converges absolutely since its $r$-th term is bounded above by $\frac{\pi}{2r^2}$.
\end{theorem}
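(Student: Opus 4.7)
My plan is to analyse the formula from Corollary~\ref{ripsmag_eucl} term by term, and combine a pointwise limit computation with a uniform bound to justify interchanging limits with the infinite sum. For each odd $r\geq 1$ and each $n$, set $a_r(n,t) := \tfrac{n}{r}(e^{-\delta_r t} - e^{-\delta_{r,n} t})$ if $r\mid n$ and $r\neq n$, and $a_r(n,t):=0$ otherwise, so that
\[
|tC^{\mathrm{eucl}}_n|_{\mathrm{Rips}} = e^{-\delta_n t} + \sum_{r\text{ odd}} a_r(n,t),
\]
and write $\alpha_r(t) := \tfrac{2\pi t}{r}\,e^{-2t\cos(\pi/(2r))}\sin(\pi/(2r))$ for the intended limit. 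Since $\lfloor r/2\rfloor=(r-1)/2$ for odd $r$, we have $\delta_r=2\cos(\pi/(2r))$ and $\delta_{r,n}=2\cos(\pi/(2r)-\pi/n)$, and the sum-to-product identity gives
\[
\delta_{r,n}-\delta_r \;=\; 4\sin\bigl(\tfrac{\pi}{2r}-\tfrac{\pi}{2n}\bigr)\sin\bigl(\tfrac{\pi}{2n}\bigr).
\]
In both parities $\delta_n\to 2$, so $e^{-\delta_n t}\to e^{-2t}$ regardless of the choice of subsequence.

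The two key estimates are the following. First (pointwise limit), for fixed odd $r$ and any $n_k\to\infty$ with $r\mid n_k$, the expansion $1-e^{-x}=x+O(x^2)$ together with the identity above and $n\sin(\pi/(2n))\to\pi/2$ yields $a_r(n_k,t)\to\alpha_r(t)$; in particular $a_1(n,t)=n(1-e^{-2\sin(\pi/n)t})\to 2\pi t=\alpha_1(t)$. Second (uniform bound), applying $1-e^{-x}\leq x$ and $\sin(\pi/(2n))\leq\pi/(2n)$ gives
\[
0 \;\leq\; a_r(n,t) \;\leq\; \tfrac{2\pi t}{r}\,e^{-\delta_r t}\sin\bigl(\tfrac{\pi}{2r}-\tfrac{\pi}{2n}\bigr) \;\leq\; \alpha_r(t) \;\leq\; \tfrac{\pi^2 t}{r^2}
\]
whenever $r\mid n$ and $r\neq n$. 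The last bound verifies that $\sum_{r\text{ odd}}\alpha_r(t)$ converges absolutely, matching the summability claim in the theorem.

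For the limsup, reverse Fatou applied with the summable dominator $\alpha_r(t)$ yields $\limsup_n\sum_r a_r(n,t)\leq\sum_r\alpha_r(t)$, hence $\limsup_n|tC^{\mathrm{eucl}}_n|_{\mathrm{Rips}}\leq e^{-2t}+\sum_r\alpha_r(t)$. Equality is realised along $n_M:=\operatorname{lcm}(1,3,\ldots,2M+1)$: for each fixed odd $r$ and all $M$ with $2M+1\geq r$ we have $r\mid n_M$ and $n_M>r$, so $a_r(n_M,t)\to\alpha_r(t)$, and dominated convergence produces the matching lower bound. For the liminf, nonnegativity of each $a_r(n,t)$ gives $|tC^{\mathrm{eucl}}_n|_{\mathrm{Rips}}\geq e^{-\delta_n t}+a_1(n,t)\to e^{-2t}+2\pi t$, and equality is achieved along primes $n=p$: the only odd divisor of $p$ strictly less than $p$ is $r=1$, so $a_r(p,t)=0$ for every $r\geq 3$ and the formula collapses to $e^{-\delta_p t}+a_1(p,t)\to e^{-2t}+2\pi t$. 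The main technical point is the uniform bound $a_r(n,t)\leq\alpha_r(t)$ needed to interchange $\limsup$ with the infinite summation; the estimate above (including the edge case $r=1$) handles it cleanly.
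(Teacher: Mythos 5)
Your proof is correct, and it takes a genuinely different route through the key analytic step. The paper extends the summands to all $n\geq r$ (its $\phi_{n,r}(t)=\tfrac nr(e^{-\delta_r t}-e^{-\delta_{r,n}t})$, not just for divisors $r\mid n$) and proves the monotonicity $\phi_{n+1,r}\geq\phi_{n,r}\geq 0$, which requires a somewhat delicate analysis of an auxiliary function $\psi$ (sign of $\psi'(0)$ via the sine addition formula, uniqueness of the critical point, etc.); the interchange of limit and sum is then done by monotone convergence, and the matching lower bound for the $\limsup$ is obtained along $m=N!$ times primes by invoking Theorem~\ref{theorem-EC-lim} and letting $N\to\infty$. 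You instead keep the terms supported on divisors and establish the single elementary domination $0\leq a_r(n,t)\leq\alpha_r(t)$ via $1-e^{-x}\leq x$, $\sin(\pi/(2n))\leq\pi/(2n)$ and monotonicity of sine on $[0,\pi/2]$ (your sum-to-product identity $\delta_{r,n}-\delta_r=4\sin(\tfrac{\pi}{2r}-\tfrac{\pi}{2n})\sin(\tfrac{\pi}{2n})$ is the right tool here, and the chain is valid for all $n>r$, including $r=1$); this gives the upper bound for every $n$ at once and lets dominated convergence, rather than monotone convergence, handle the $\mathrm{lcm}(1,3,\dots,2M+1)$ subsequence that realises the $\limsup$, without needing to cite Theorem~\ref{theorem-EC-lim}. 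The $\liminf$ argument (nonnegativity plus the $r=1$ term, primes for the reverse inequality) coincides with the paper's. What each approach buys: yours is shorter and more elementary, since a crude one-sided bound replaces a genuine monotonicity lemma; the paper's stronger monotonicity statement is, however, reused in the proof of Theorem~\ref{theorem-sup} (that the $\limsup$ equals $\sup_n$), so if you wanted to recover that result as well you would need either the paper's lemma or a separate argument. Your bound $\alpha_r(t)\leq\pi^2 t/r^2$ differs from the $t$-free constant $\pi/(2r^2)$ quoted in the statement only because you include the prefactor $2\pi t$ inside the terms; for fixed $t$ this makes no difference to summability or to any step of the argument.
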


In fact, the upper limit is also the supremum of the sequence:

\begin{theorem}
\label{theorem-sup}
The Rips magnitudes of Euclidean cycles $C^{\mathrm{eucl}}_n$ satisfy
\[
\sup_{n\in\N}|t C^{\mathrm{eucl}}_n|_{\mathrm{Rips}}=e^{-2t}+2\pi t\sum_{r\text{ odd}}\frac1r e^{-2t\cos\left(\frac\pi{2r}\right)}\sin\left(\frac\pi{2r}\right).
\]
\end{theorem}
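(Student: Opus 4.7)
The plan is to reduce the statement to a term-by-term upper bound, using Theorem~\ref{theorem-lim-sup-inf} to produce the matching lower bound for free. Write $L(t)$ for the asserted right-hand side. Since $\sup\ge\limsup$ for any real sequence, Theorem~\ref{theorem-lim-sup-inf} immediately gives $\sup_{n\in\N}|tC^{\mathrm{eucl}}_n|_{\mathrm{Rips}}\ge L(t)$, so the work lies in proving $|tC^{\mathrm{eucl}}_n|_{\mathrm{Rips}}\le L(t)$ for every individual $n\ge 1$. For this I would start from the closed form in Corollary~\ref{ripsmag_eucl} and bound each of its summands by a suitable summand of the series defining $L(t)$.

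For each odd divisor $r$ of $n$ with $r\ne n$ (so that necessarily $n\ge 2r$), the aim is the ``divisor bound''
\[
\tfrac{n}{r}\bigl(e^{-\delta_r t}-e^{-\delta_{r,n}t}\bigr)\;\le\;\tfrac{2\pi t}{r}\sin\!\bigl(\tfrac{\pi}{2r}\bigr)e^{-\delta_r t},
\]
which is precisely the $r$-th term of $L(t)$. Factoring out $e^{-\delta_r t}$, applying $1-e^{-x}\le x$, and then invoking the mean value theorem on the cosine difference $\delta_{r,n}-\delta_r=2\cos(\tfrac{\pi}{2r}-\tfrac{\pi}{n})-2\cos(\tfrac{\pi}{2r})$ reduces the claim to $\sin\xi\le\sin(\tfrac{\pi}{2r})$ for some $\xi\in[\tfrac{\pi}{2r}-\tfrac{\pi}{n},\tfrac{\pi}{2r}]\subseteq[0,\tfrac{\pi}{2}]$, which holds by monotonicity of $\sin$ on this interval.

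The only remaining piece on the left is the tail $e^{-\delta_n t}$. For even $n$ we have $\delta_n=2$, so this is exactly the $e^{-2t}$ constant appearing in $L(t)$ and there is nothing more to do. For odd $n$ we have $\delta_n=2\cos(\tfrac{\pi}{2n})<2$, and I would prove the absorption inequality
\[
e^{-\delta_n t}\;\le\;e^{-2t}+\tfrac{2\pi t}{n}\sin\!\bigl(\tfrac{\pi}{2n}\bigr)e^{-\delta_n t},
\]
so that the tail is dominated by the $e^{-2t}$ summand together with the $r=n$ term of $L(t)$. Using $1-e^{-x}\le x$, the identity $2-2\cos(\tfrac{\pi}{2n})=4\sin^2(\tfrac{\pi}{4n})$, and the double-angle formula $\sin(\tfrac{\pi}{2n})=2\sin(\tfrac{\pi}{4n})\cos(\tfrac{\pi}{4n})$, this collapses to the estimate $\cos(\tfrac{\pi}{4n})\ge\tfrac{1}{4}$, which is immediate from $\tfrac{\pi}{4n}\le\tfrac{\pi}{4}$. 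Summing all the matched bounds and noting that the unmatched odd-$r$ summands of $L(t)$ are non-negative then yields $|tC^{\mathrm{eucl}}_n|_{\mathrm{Rips}}\le L(t)$.

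I expect the odd-$n$ tail inequality to be the delicate step: the divisor comparison is a one-line application of the mean value theorem, whereas the tail must be absorbed into two terms of $L(t)$ at once, forcing the slightly more careful trigonometric manipulation above. Apart from this, the proof is essentially bookkeeping — pairing each summand on the left with a distinct summand on the right and discarding the positive remainder.
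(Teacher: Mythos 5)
Your proposal is correct, but it takes a genuinely different route from the paper. The paper's own proof never establishes a bound against the series directly for each $n$: it first proves the doubling inequality $|t C^{\mathrm{eucl}}_n|_{\mathrm{Rips}}\leq|t C^{\mathrm{eucl}}_{2n}|_{\mathrm{Rips}}$ for odd $n$ (reducing the supremum to even $n$), then for even $n$ bounds the magnitude by $\sum_{\text{odd }r\leq n}\phi_{n,r}(t)+e^{-2t}$, and uses the monotonicity $\phi_{n+1,r}\geq\phi_{n,r}$ (a somewhat delicate lemma carried over from the proof of Theorem~\ref{theorem-lim-sup-inf}) to identify the supremum over even $n$ with the limit, hence with the $\limsup$. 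You instead prove the pointwise domination $|t C^{\mathrm{eucl}}_n|_{\mathrm{Rips}}\leq L(t)$ for \emph{every} $n$, matching each divisor term with the corresponding term of the series: since $\delta_r=2\cos\left(\frac{\pi}{2r}\right)$ and $\delta_{r,n}=2\cos\left(\frac{\pi}{2r}-\frac{\pi}{n}\right)$ for odd $r$, your combination of $1-e^{-x}\leq x$ with the mean value theorem gives $\delta_{r,n}-\delta_r\leq\frac{2\pi}{n}\sin\left(\frac{\pi}{2r}\right)$, which is exactly what is needed, and your absorption of the odd-$n$ tail $e^{-\delta_n t}$ into $e^{-2t}$ plus the $r=n$ term checks out (the reduction to $\cos\left(\frac{\pi}{4n}\right)\geq\frac14$ via $\sin x\leq x$ and the double-angle identity is fine, and there is no double counting since $r=n$ was excluded from the divisor matching). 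Your route buys a self-contained upper bound that avoids both the doubling trick and the $\phi_{n+1,r}\geq\phi_{n,r}$ lemma, and it yields the slightly stronger statement that the sequence never exceeds its $\limsup$; the paper's route is shorter given that the monotonicity machinery is already in place from the previous theorem, but is less elementary. Both proofs obtain the lower bound identically, from $\sup\geq\limsup$ and Theorem~\ref{theorem-lim-sup-inf}.
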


We include a plot of the $\liminf$ and $\limsup$ for small $t$ for comparison:
\begin{center}
\includegraphics[width=300pt]{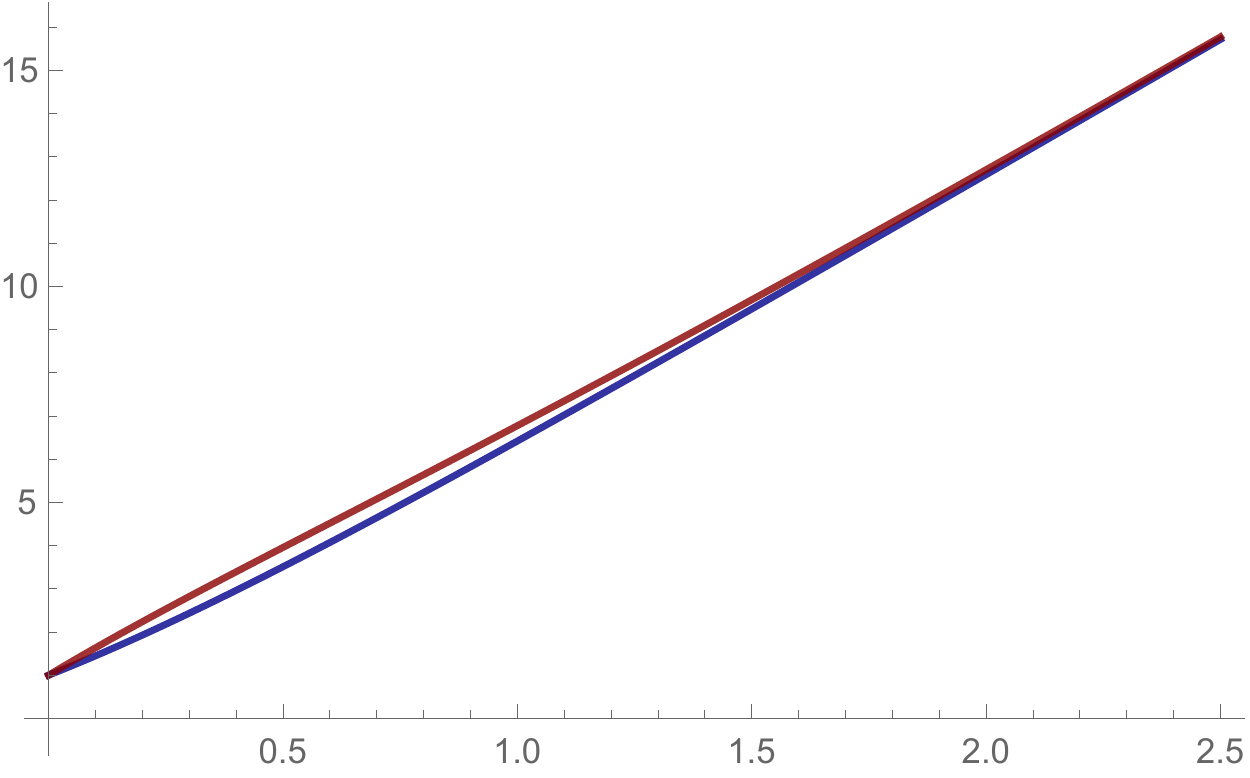}
\end{center}
It is visible from this graph that the difference is the most pronounced for small values of $t$. Now, fix $t=\frac12$. The behaviour of $|t C^{\mathrm{eucl}}_n|_{\mathrm{Rips}}$ evaluated at $t=\frac12$ as $n$ grows larger can be pictured as follows, with $n$ on the horizontal axis and $|t C^{\mathrm{eucl}}_n|_{\mathrm{Rips}}$
on the vertical.
The values of the $\liminf$ and $\limsup$ at $t=\frac12$ 
are plotted as the two red lines.

\begin{center}
\includegraphics[width=300pt]{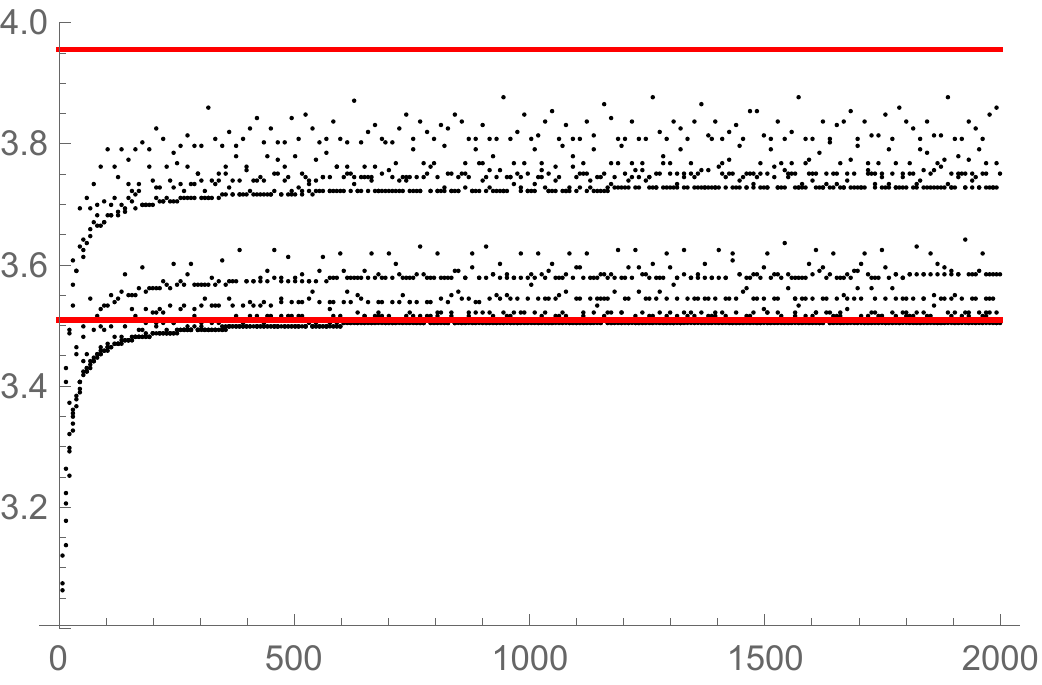}
\end{center}

Note that the chaotic behaviour of the graph is reminiscent of various functions
from analytic number theory such as for instance the sum of divisors function
$\sigma(n)=\sum_{d|n}d$, and is the graphical expression of the behaviour
discussed in the paragraph following Proposition \ref{ripsmag_cycle}.
It does appear as though the points accumulate more along specific lines,
which we suspect correspond to subsequences with certain divisibility properties.

Finally, note that restricting to equally spaced subsets of $S^1_{eucl}$ is somewhat
unnatural; we leave open the following question, which would define the `upper and
lower Rips magnitude' of the circle intrinsically:
\begin{question}
Does this asymptotic behaviour extend to arbitrary finite subsets of $S^1_{eucl}$?
For instance, given any $\epsilon>0$, is there a $\delta>0$ such that for all 
finite $A\subseteq S^1_{eucl}$ with $d_H(A,S^1_{eucl})<\delta$ we have
\[
e^{-2t}+2\pi t-\epsilon<|tA|_\mathrm{Rips}<e^{-2t}+2\pi t\sum_{r\text{ odd}}\frac1r e^{-2t\cos\left(\frac\pi{2r}\right)}\sin\left(\frac\pi{2r}\right)+\epsilon
\]
for all $t$ in a given interval?
\end{question}

\subsection{Geodesic circle}

Finally, we note that the case of the geodesic circle $S^1_{geo}$ of total arclength $2\pi$
can be treated using the same methods as we used for $S^1_{eucl}$. Namely, we restrict
attention to equally spaced subsets $C_n^{geo}$ described in Section \ref{section-cycles}.
(Note that $C_n^{geo}$ is just the $n$-cycle graph $C_n$ rescaled by $\frac{2\pi}{n}$.)

We could calculate the limits along the same subsequences we examined in the case $S^1_{eucl}$
and find that they again exist, but instead we just state the final result regarding the lower
and upper limit. In this case it turns out that the lower limit is still finite and can be expressed
explicitly, whereas the upper limit becomes infinite. Thus the Rips magnitude of $S^1_{eucl}$
and $S^1_{geo}$ behave quite differently.

\begin{theorem}\label{theorem-GC}
The Rips magnitudes of geodesic cycles $C^{\mathrm{geo}}_n$ satisfy
\[
\liminf_{n\to\infty}|t C^{\mathrm{geo}}_n|_{\mathrm{Rips}}=e^{-\pi t}+2\pi t
\]
and
\[
\limsup_{n\to\infty}|t C^{\mathrm{geo}}_n|_{\mathrm{Rips}}=\infty.
\]
\end{theorem}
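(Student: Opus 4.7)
The plan is to combine the explicit formula of Corollary~\ref{ripsmag_geo} with two elementary observations: every summand in that formula is non-negative (because $\eta_{r,n}=\eta_r+2\pi/n>\eta_r$), and the trailing term satisfies $e^{-\eta_n t}\geq e^{-\pi t}$ since $\eta_n=2\pi\lfloor n/2\rfloor/n\leq\pi$. These observations will pin down the $\liminf$, while a subsequence with many odd divisors will force the $\limsup$ to be infinite.

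For the $\liminf$ I would first derive a uniform lower bound by discarding every summand except $r=1$ (whose contribution equals $n(1-e^{-2\pi t/n})$, since $\eta_1=0$ and $\eta_{1,n}=2\pi/n$) and bounding the trailing term by $e^{-\pi t}$:
\[
|tC_n^\mathrm{geo}|_\mathrm{Rips}\;\geq\;n(1-e^{-2\pi t/n})+e^{-\pi t}
\]
for all $n\geq 2$. Since $n(1-e^{-2\pi t/n})\to 2\pi t$ as $n\to\infty$, this yields $\liminf\geq e^{-\pi t}+2\pi t$. To see the bound is sharp I would evaluate along odd primes $n=p$: the only odd divisor of $p$ other than $p$ itself is $r=1$, so the formula collapses to $p(1-e^{-2\pi t/p})+e^{-(\pi-\pi/p)t}$, which converges to $e^{-\pi t}+2\pi t$.

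For the claim $\limsup=\infty$ I would choose $n_k=\mathrm{lcm}(1,2,\ldots,2k{+}1)$, so that every odd $d\in\{1,3,\ldots,2k{+}1\}$ divides $n_k$; for $k$ large these are all strictly less than $n_k$. Using $1-e^{-x}\geq x/2$ for $x\in(0,1]$ together with $e^{-\eta_d t}\geq e^{-\pi t}$, each such $d$ contributes at least $\pi t\, e^{-\pi t}/d$ to the formula as soon as $n_k\geq 2\pi t$. Summing over the odd $d\leq 2k{+}1$ gives
\[
|tC_{n_k}^\mathrm{geo}|_\mathrm{Rips}\;\geq\;\pi t\, e^{-\pi t}\sum_{\substack{1\leq d\leq 2k+1\\ d\text{ odd}}}\frac{1}{d},
\]
and the right-hand side diverges as $k\to\infty$, so $|tC_{n_k}^\mathrm{geo}|_\mathrm{Rips}\to\infty$.

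The main point (more conceptual than technical) is to see clearly why the geodesic case behaves so differently from the Euclidean case of Theorem~\ref{theorem-lim-sup-inf}. In the Euclidean formula the $r$-th contribution picks up an extra damping factor $\sin(\pi/(2r))\sim 1/r$ from the chord-length trigonometry, rendering the divisor sum convergent. In the geodesic metric no such damping occurs: $e^{-\eta_d t}\to e^{-\pi t}>0$ as the odd divisor $d\to\infty$, and the resulting divisor sum is essentially the harmonic series on odd integers, which is exactly what produces the infinite $\limsup$.
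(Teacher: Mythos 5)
Your proof is correct. For the $\liminf$ you follow essentially the route the paper sketches (adapting the Euclidean argument of Theorem~\ref{theorem-lim-sup-inf}): since every summand of Corollary~\ref{ripsmag_geo} is nonnegative and $\eta_n\leq\pi$, keeping only the $r=1$ term plus the tail gives the lower bound $n(1-e^{-2\pi t/n})+e^{-\pi t}$, and along odd primes the formula collapses to exactly this quantity with $\eta_p=\pi-\pi/p$, so the bound is attained. For the $\limsup$ you genuinely depart from the paper, to your advantage: the paper's sketch repeats the Euclidean machinery --- subsequential limits along $mp$ with $p$ prime (the analogue of Theorem~\ref{theorem-EC-lim}), the monotonicity $\phi_{n+1,r}\geq\phi_{n,r}$, and monotone convergence --- to identify the upper limit as the divergent series $\sum_{\text{odd }r}\frac{2\pi t}{r}e^{-\pi\frac{r-1}{r}t}+e^{-\pi t}$. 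Since the theorem only asserts that the $\limsup$ is infinite, your shortcut suffices: along $n_k=\mathrm{lcm}(1,\ldots,2k+1)$ each odd $d\leq 2k+1$ is a proper divisor, and the bounds $1-e^{-x}\geq x/2$ on $(0,1]$ and $e^{-\eta_d t}\geq e^{-\pi t}$ give each divisor a contribution of at least $\pi t\,e^{-\pi t}/d$ once $n_k\geq 2\pi t$, so the odd harmonic series forces divergence. What you lose relative to the paper's (sketched) route is the exact value of the subsequential limits and the series expression for the upper limit; what you gain is a short, elementary, self-contained argument with no monotonicity lemma and no interchange of sum and limit. Your closing observation --- that the Euclidean damping factor $\sin(\pi/(2r))\sim 1/r$ is absent in the geodesic case, leaving a harmonic-type divisor sum --- is precisely the reason the paper itself gives for the divergence.
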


The behaviour of $|t C^{\mathrm{geo}}_n|_{\mathrm{Rips}}$ evaluated at $t=\frac12$ as $n$ grows larger can be pictured as follows; the $\liminf$ evaluated at $t=\frac12$ corresponds to the red line and the $\limsup$ is infinite.

\begin{center}
\includegraphics[width=300pt]{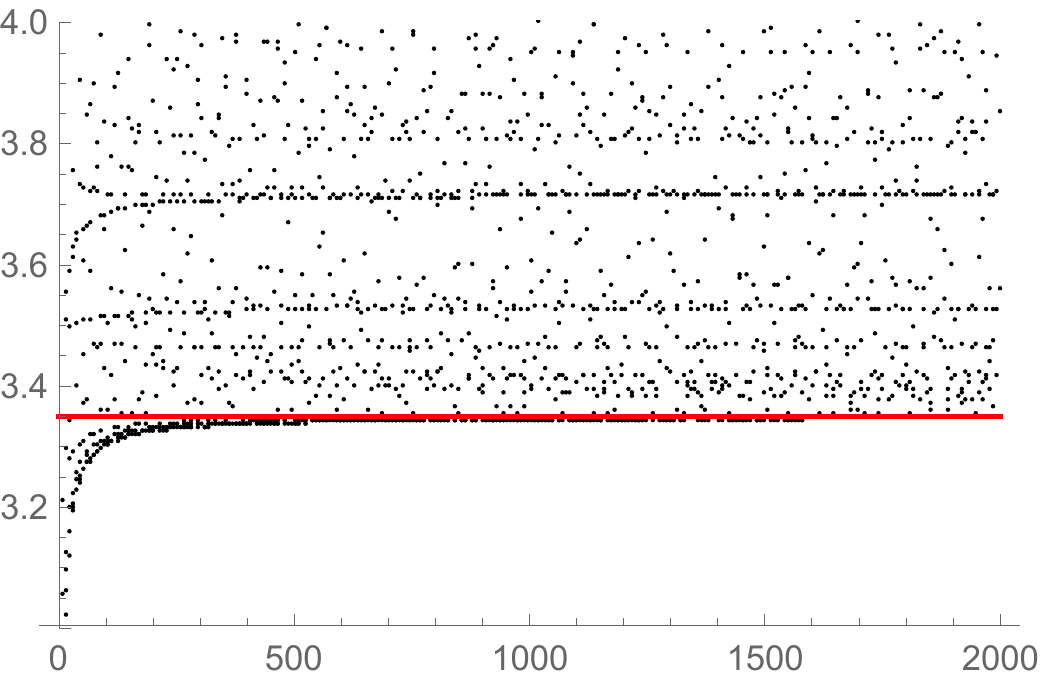}
\end{center}

We again note that it would be interesting to study the asymptotics over all finite subsets.

\subsection{Proofs of the Results}

Here we prove the results stated in the preceding subsections. We only give a sketch of the proof of Theorem \ref{theorem-GC} as the ideas are analogous to the ones used in the proof of Theorem \ref{theorem-lim-sup-inf}.

\begin{proof}[Proof of Theorem~\ref{theorem-interval} part (1)]
Proposition~\ref{proposition-subsets-of-R} shows that if $A\subseteq[0,1]$
is finite, with elements $a_1<\cdots<a_n$, then
\[
	|A|_{\mathrm{Rips}} = n - \sum_{j=1}^{n-1} e^{-(a_{j+1}-a_j)}.
\]
Suppose that $a_0<a_1$.  Then
\[
	|\{a_0\}\cup A|_{\mathrm{Rips}} 
	=
	|A|_{\mathrm{Rips}} + (1-e^{-(a_1-a_0)})
	\geq
	|A|_{\mathrm{Rips}},
\]
and similarly for $|A\cup\{a_{n+1}\}|_{\mathrm{Rips}}$ if $a_{n+1}>a_n$.
Now suppose that $a_i<b<a_{i+1}$.
Then
\begin{align*}
	|A\cup\{b\}|_{\mathrm{Rips}}
	&=
	|A|_{\mathrm{Rips}} + 1 + e^{-(a_{i+1}-a_i)}- e^{-(a_{i+1}-b)}-e^{-(b-a_i)}
	\geq |A|_{\mathrm{Rips}},
\end{align*}
the latter because one can easily see that 
$1+e^{-(x+y)}-e^{-x}-e^{-y}\geq 0$ for $x,y\geq 0$.
\end{proof}

\begin{proof}[Proof of Theorem~\ref{theorem-interval} part (2)]
Suppose $A$ consisting of $a_1<\ldots<a_n$ is a finite subset of the interval such that $d_H(A,I)<\delta<1$. Further define $a_0=0$ and $a_{n+1}=1$. The assumption on the Hausdorff distance implies that $a_{j+1}-a_j<\delta$ for $j=0,\ldots,n$ and $\delta<1$ implies $\delta^k\leq\delta$ for $k\in\N$. 
Consider the function $f\colon(0,\infty)\to\R$ defined by
\begin{align*}
	f(t)
	&=
	(1+t)-|t A|_{\mathrm{Rips}}
	\\
	&=
	(1+t)-n+\sum_{j=1}^{n-1} e^{-(a_{j+1}-a_j)t}
	\\
	&=t+\sum_{j=1}^{n-1}(e^{-(a_{j+1}-a_{j})t}-1).
\end{align*}
We extend the domain of $f$ to $[0,\infty)$ in the evident way.
We will show that:
\begin{enumerate}
	\item
	$f(0)=0$

	\item
	$f'(0)$ lies in the range
	$0\leq f'(0)\leq 2\delta$.

	\item
	$f''(t)$ lies in the range
	$0\leq f''(t)\leq\delta$
	for all $t\in[0,\infty)$.
\end{enumerate}
It follows quickly that
\[
	0\leq f(t)\leq \delta\cdot (2t+t^2/2)
\]
for $t\in[0,\infty)$.
Thus, as $\delta\to 0$ we have $f\to 0$ 
uniformly on any bounded subset of $[0,\infty)$, 
and the result follows.

It remains to check the three properties.  
The first and second derivatives of $f$ are as follows.
\begin{align*}
	f'(t)
	&= 
	1-\sum_{j=1}^{n-1}(a_{j+1}-a_j)e^{-(a_{j+1}-a_{j})t}
	\\
	f''(t)
	&= 
	\sum_{j=1}^{n-1}(a_{j+1}-a_j)^2e^{-(a_{j+1}-a_{j})t}
\end{align*}
Then (1) is immediate, while 
$f'(0)=1-\sum_{j=1}^{n-1}(a_{j+1}-a_j)=(1-a_n)+a_1$ and (2) follows,
and 
$0\leq f''(t)
\leq \sum_{j=1}^{n-1}\delta\cdot(a_{j+1}-a_j)\cdot 1$
and (3) follows.
\end{proof}

\begin{proof}[Proof of Theorem~\ref{theorem-EC-lim}]
Recall that by Corollary \ref{ripsmag_eucl}, the Rips magnitudes of Euclidean cycles are given by:
\[
|t C^{\mathrm{eucl}}_n|_{\mathrm{Rips}}=\sum_{%\substack{
\text{odd }r|n%\\r\neq n}
}\frac nr (e^{-\delta_r t}-e^{-\delta_{r,n}t})+e^{-\delta_n t},
\]
where
\[
\delta_r=\diam(C^{\mathrm{eucl}}_r)=2\sin\left(\pi\frac{\lfloor\frac r2\rfloor}r\right)\quad\text{and}\quad\delta_{r,n}=2\sin\left(\pi\left(\frac1n+\frac{\lfloor\frac r2\rfloor}r\right)\right).
\]
For an odd prime $p$, each odd divisor of $mp$ is of the form $r$ or $rp$ (or both), where $r$ is an odd divisor of $m$. Therefore, assuming $p$ is large enough, so that $p\nmid m$, we can split the Rips magnitude into three summands:
\[
|t C^{\mathrm{eucl}}_{m p}|_{\mathrm{Rips}}=\sum_{\text{odd }r|m}\frac{mp}r (e^{-\delta_r t}-e^{-\delta_{r,mp}t})+\sum_{%\substack{
\text{odd }r|m%\\r\neq m}
}\frac mr (e^{-\delta_{rp} t}-e^{-\delta_{rp,mp}t})+e^{-\delta_{mp} t}.
\]
We can calculate the limit as $p\to\infty$ of each summand individually. To treat the first summand, define a function $\phi\colon \R\to\R$ by the formula
\[
\phi(x)=2\sin\left(\pi\left(x+\frac{r-1}{2r}\right)\right),
\]
with derivative
\[
\phi'(x)=2\pi\cos\left(\pi\left(x+\frac{r-1}{2r}\right)\right).
\]
Observe that $\delta_r=\phi(0)$ and $\delta_{r,mp}=\phi(\frac1{mp})$. Therefore,
\begin{align*}
\lim_{p\to\infty}\frac{mp}r (e^{-\delta_r t}-e^{-\delta_{r,mp}t})&=-\frac1r\lim_{p\to\infty}\frac{e^{-\phi(\frac1{mp})t}-e^{-\phi(0)t}}{\frac1{mp}}\\&=-\frac1r\frac{\mathrm d}{\mathrm dx}{\Big\vert}_{x=0}e^{-\phi(x)t}\\&=\frac1re^{-\phi(0)t}\phi'(0)t\\&=\frac{2\pi t}re^{-2t\sin\left(\pi\frac{r-1}{2r}\right)}\cos\left(\pi\frac{r-1}{2r}\right)\\&=\frac{2\pi t}re^{-2t\cos\left(\frac{\pi}{2r}\right)}\sin\left(\frac{\pi}{2r}\right).
\end{align*}
This takes care of the first summand. The second summand vanishes in the limit, because
\[
\lim_{p\to\infty}e^{-\delta_{rp} t}=\lim_{p\to\infty}e^{-\delta_{rp,mp} t}=e^{-2t}.
\]
Finally, the limit of the third summand is
\[
\lim_{p\to\infty}e^{-\delta_{mp} t}=e^{-2t}.
\]
\end{proof}

\begin{proof}[Proof of Theorem~\ref{theorem-lim-sup-inf}]
Define $\phi_{n,r}(t)=\frac nr(e^{-\delta_r t}-e^{-\delta_{r,n}t})$ for $n\geq r$ and $0$ otherwise. Here $r$ will always be assumed to be odd. We can show that for $t>0$,
\[
\phi_{n,r}(t)\geq 0\qquad\text{and}\qquad\phi_{n+1,r}(t)\geq\phi_{n,r}(t)
\]
always hold. The first of these inequalities reduces to the fact that $\delta_r\leq\delta_{r,n}$. For the second inequality, define $\psi(t)=1+ne^{(\delta_r-\delta_{r,n})t}-(n+1)e^{(\delta_r-\delta_{r,n+1})t}$ and observe that the inequality reduces to $\psi(t)\geq 0$ for $t>0$. To show that this is in fact true, one can then verify that:
\begin{itemize}
\item $\psi(0)=0$,
\item $\lim_{t\to\infty}\psi(t)>0$,
\item $\psi'(0)=(n+1)\delta_{r,n+1}-n\delta_{r,n}-\delta_r>0$ and
\item $\psi'(t)=0$ for at most one $t\in(0,\infty)$.
\end{itemize}
To prove the third bullet point, we can write out the expression explicitly and use the addition theorem for the sine function:
\begin{multline*}
2(n+1)\sin\left(\pi\left(\frac1{n+1}+\frac{r-1}{2r}\right)\right)-2n\sin\left(\pi\left(\frac1n+\frac{r-1}{2r}\right)\right)-2\sin\left(\pi\frac{r-1}{2r}\right)\\
=2\left[(n+1)\sin\left(\frac{\pi}{n+1}\right)-n\sin\left(\frac{\pi}{n}\right)\right]\cos\left(\pi\frac{r-1}{2r}\right)\\+2\left[(n+1)\cos\left(\frac{\pi}{n+1}\right)-n\cos\left(\frac{\pi}{n}\right)-1\right]\sin\left(\pi\frac{r-1}{2r}\right).
\end{multline*}
One can now show that the expressions in square brackets are positive and conclude that the whole expression is positive. This proves the second inequality.

Using the explicit formula for the Rips magnitude, we now have:
\[
|t C^{\mathrm{eucl}}_n|_{\mathrm{Rips}}=\sum_{%\substack{
\text{odd }r|n%\\r\neq n}
}\frac nr (e^{-\delta_r t}-e^{-\delta_{r,n}t})+e^{-\delta_n t}\leq\sum_{\text{odd }r\leq n}\frac nr (e^{-\delta_r t}-e^{-\delta_{r,n}t})+e^{-\delta_n t}.
\]
Therefore,
\begin{align*}
\limsup_{n\to\infty}|t C^{\mathrm{eucl}}_n|_{\mathrm{Rips}}&\leq\lim_{n\to\infty}\left(\sum_{\text{odd }r}\phi_{n,r}(t)+e^{-\delta_nt}\right)\\&=\sum_{\text{odd }r}\lim_{n\to\infty}\phi_{n,r}(t)+\lim_{n\to\infty}e^{-\delta_nt}\\&=\sum_{\text{odd }r}\lim_{n\to\infty}\frac nr (e^{-\delta_r t}-e^{-\delta_{r,n}t})+e^{-2t}\\&=\sum_{\text{odd }r}\frac {2\pi t}r e^{-2t\cos\left(\frac\pi{2r}\right)}\sin\left(\frac\pi{2r}\right)+e^{-2t},
\end{align*}
where the limit is calculated in the same way as in the proof of the previous theorem. The interchange of sum and limit is justified by the Lebesgue monotone convergence theorem. To prove the reverse inequality, let $N$ be an arbitrary positive integer. Let $m=N!$ and note that $m$ is divisible by every odd $r\leq N$. Therefore, by the previous theorem,
\begin{align*}
e^{-2t}+2\pi t\sum_{\text{odd }r\leq N}\frac1r e^{-2t\cos\left(\frac\pi{2r}\right)}\sin\left(\frac\pi{2r}\right)&\leq e^{-2t}+2\pi t\sum_{\text{odd }r|m}\frac1r e^{-2t\cos\left(\frac\pi{2r}\right)}\sin\left(\frac\pi{2r}\right)\\&=\lim_{\substack{p\to\infty\\p\text{ prime}}}|t C^{\mathrm{eucl}}_{mp}|_{\mathrm{Rips}}\\&\leq \limsup_{n\to\infty}|t C^{\mathrm{eucl}}_n|_{\mathrm{Rips}}.
\end{align*}
Taking the limit as $N\to\infty$ establishes the lower bound.

To prove the statement about the lower limit, again start from the explicit formula and note that $r=1$ is a proper odd divisor of any integer $n>1$:
\[
|t C^{\mathrm{eucl}}_n|_{\mathrm{Rips}}=\sum_{%\substack{
\text{odd }r|n%\\r\neq n}
}\frac nr (e^{-\delta_r t}-e^{-\delta_{r,n}t})+e^{-\delta_n t}\geq n (e^{-\delta_1 t}-e^{-\delta_{1,n}t})+e^{-\delta_n t}.
\]
Therefore
\[
\liminf_{n\to\infty}|t C^{\mathrm{eucl}}_n|_{\mathrm{Rips}}\geq\lim_{n\to\infty}\left(n (e^{-\delta_1 t}-e^{-\delta_{1,n}t})+e^{-\delta_n t}\right)=2\pi t+e^{-2t}.
\]
To establish the other inequality, note that by the previous theorem, the prime indices yield a subsequence converging to the lower bound.
\end{proof}

\begin{proof}[Proof of Theorem~\ref{theorem-sup}]
The upper limit of any sequence is a lower bound for its supremum, so
\[
\limsup_{n\to\infty}|t C^{\mathrm{eucl}}_n|_{\mathrm{Rips}}\leq\sup_{n\in\N}|t C^{\mathrm{eucl}}_n|_{\mathrm{Rips}}.
\]
To show the converse inequality, we first observe that
\[
|t C^{\mathrm{eucl}}_n|_{\mathrm{Rips}}\leq|t C^{\mathrm{eucl}}_{2n}|_{\mathrm{Rips}}
\]
holds for any odd $n\in\N$. To see this, we look at their difference. Using the facts that $n$ and $2n$ have the same odd divisors, that $\delta_n=\delta_{n,n}$ and $\delta_{n,2n}=\delta_{2n}=2$, this can be simplified to:
\[
|t C^{\mathrm{eucl}}_{2n}|_{\mathrm{Rips}}-|t C^{\mathrm{eucl}}_n|_{\mathrm{Rips}}=\sum_{\substack{\text{odd }r|n\\r\neq n}}\left(\phi_{2n,r}(t)-\phi_{n,r}(t)\right)+e^{-\delta_n t}-e^{-2 t},
\]
where $\phi_{n,r}$ is as defined in the proof of Theorem \ref{theorem-lim-sup-inf}, where we also showed that $\phi_{n+1,r}\geq\phi_{n,r}$ for all $n$ and $r$. Using this latter fact and the fact that $\delta_n\leq2$, we now have
\[
\phi_{2n,r}(t)-\phi_{n,r}(t)\geq0\qquad\text{and}\qquad e^{-\delta_n t}-e^{-2 t}\geq 0,
\]
so the difference is indeed nonnegative.

Therefore, the supremum may be calculated over the even numbers:
\[
\sup_{n\in\N}|t C^{\mathrm{eucl}}_n|_{\mathrm{Rips}}=\sup_{n\text{ even}}|t C^{\mathrm{eucl}}_n|_{\mathrm{Rips}}.
\]
Now recall that for even $n$ we have:
\[
|t C^{\mathrm{eucl}}_n|_{\mathrm{Rips}}\leq\sum_{\text{odd }r\leq n}\frac nr (e^{-\delta_r t}-e^{-\delta_{r,n}t})+e^{-2t}.
\]
Furthermore, we saw in the proof of Theorem \ref{theorem-lim-sup-inf} that the expression on the right hand side is increasing in $n$ (because $\phi_{n+1,r}\geq\phi_{n,r}$). Taking the supremum of both sides over all even integers $n$, we therefore have:
\[
\sup_{n\text{ even}}|t C^{\mathrm{eucl}}_n|_{\mathrm{Rips}}\leq\lim_{\substack{n\to\infty\\n\text{ even}}}\sum_{\text{odd }r\leq n}\frac nr (e^{-\delta_r t}-e^{-\delta_{r,n}t})+e^{-2t}\leq\limsup_{n\to\infty}|t C^{\mathrm{eucl}}_n|_{\mathrm{Rips}},
\]
where the second inequality is immediate from the proof of Theorem \ref{theorem-lim-sup-inf}.
\end{proof}

\begin{proof}[Sketch proof of Theorem~\ref{theorem-GC}]
By Corollary \ref{ripsmag_geo}, the explicit formulas for Rips magnitudes of geodesic cycles are
\[
|t C^{\mathrm{geo}}_n|_{\mathrm{Rips}}=\sum_{\substack{\text{odd }r|n\\r\neq n}}\frac nr (e^{-\eta_r t}-e^{-\eta_{r,n}t})+e^{-\eta_n t},
\]
where
\[
\eta_r=2\pi\frac{\lfloor\frac r2\rfloor}r\qquad\text{and}\qquad\eta_{r,n}=2\pi\left(\frac1n+\frac{\lfloor\frac r2\rfloor}r\right).
\]
A similar procedure as in the Euclidean case now allows for the calculation of limits of various subsequences, as well as the upper and the lower limit. The main difference is that in this case, the series obtained as the upper limit does not converge anymore:
\[
\limsup_{n\to\infty}|t C^{\mathrm{geo}}_n|_{\mathrm{Rips}}=\sum_{\text{odd }r}\frac{2\pi t}re^{-\pi\frac{r-1}{r}t}+e^{-\pi t}=\infty.
\qedhere
\]
\end{proof}

\bibliographystyle{plain}
\bibliography{rips-magnitude}

\begin{thebibliography}{10}

\bibitem{adamaszek}
{Micha\l} Adamaszek.
\newblock Clique complexes and graph powers.
\newblock {\em Israel J. Math.}, 196(1):295--319, 2013.

\bibitem{adamaszek2017vietoris}
Micha\l Adamaszek and Henry Adams.
\newblock The {V}ietoris-{R}ips complexes of a circle.
\newblock {\em Pacific J. Math.}, 290(1):1--40, 2017.

\bibitem{AAFPPJ}
Micha{\l} Adamaszek, Henry Adams, Florian Frick, Chris Peterson, and Corrine
  Previte-Johnson.
\newblock Nerve complexes of circular arcs.
\newblock {\em Discrete Comput. Geom.}, 56(2):251--273, 2016.

\bibitem{AARellipses}
Micha{\l} Adamaszek, Henry Adams, and Samadwara Reddy.
\newblock On {V}ietoris-{R}ips complexes of ellipses.
\newblock {\em J. Topol. Anal.}, 11(3):661--690, 2019.

\bibitem{Asao}
Yasuhiko Asao.
\newblock Magnitude homology of geodesic metric spaces with an upper curvature
  bound.
\newblock Preprint, available at
  \href{https://arxiv.org/abs/1903.11794}{arXiv:1903.11794}, 2019.

\bibitem{azumaya1950corrections}
Gor\^{o} Azumaya.
\newblock Corrections and supplementaries to my paper concerning
  {K}rull-{R}emak-{S}chmidt's theorem.
\newblock {\em Nagoya Math. J.}, 1:117--124, 1950.

\bibitem{BarceloCarbery}
Juan~Antonio Barcel\'{o} and Anthony Carbery.
\newblock On the magnitudes of compact sets in {E}uclidean spaces.
\newblock {\em Amer. J. Math.}, 140(2):449--494, 2018.

\bibitem{bauer2017morse}
Ulrich Bauer and Herbert Edelsbrunner.
\newblock The {M}orse theory of \v{C}ech and {D}elaunay complexes.
\newblock {\em Trans. Amer. Math. Soc.}, 369(5):3741--3762, 2017.

\bibitem{bobrowski}
Omer Bobrowski and Matthew~Strom Borman.
\newblock Euler integration of {G}aussian random fields and persistent
  homology.
\newblock {\em J. Topol. Anal.}, 4(1):49--70, 2012.

\bibitem{bobrowski2014topology}
Omer Bobrowski and Matthew Kahle.
\newblock Topology of random geometric complexes: a survey.
\newblock {\em J. Appl. Comput. Topol.}, 1(3-4):331--364, 2018.

\bibitem{botnan2018decomposition}
Magnus~Bakke Botnan and William Crawley-Boevey.
\newblock Decomposition of persistence modules.
\newblock To appear in Proc.~Amer.~Math.~Soc, 2018.

\bibitem{BHPWcurvature}
Peter Bubenik, Michael Hull, Dhruv Patel, and Benjamin Whittle.
\newblock Persistent homology detects curvature.
\newblock {\em Inverse Problems}, 36(2):025008, jan 2020.

\bibitem{BubenikMilicevic}
Peter Bubenik and Nikola Mili\'cevi\'c.
\newblock Homological algebra for persistence modules.
\newblock Preprint, available at
  \href{https://arxiv.org/abs/1905.05744}{arXiv:1905.052744}, 2018.

\bibitem{bubenik2014categorification}
Peter Bubenik and Jonathan~A. Scott.
\newblock Categorification of persistent homology.
\newblock {\em Discrete Comput. Geom.}, 51(3):600--627, 2014.

\bibitem{CarlssonFilippenko}
Gunnar Carlsson and Benjamin Filippenko.
\newblock Persistent homology of the sum metric.
\newblock {\em J. Pure Appl. Algebra}, 224(5):106244, 2020.

\bibitem{multiparameter}
Gunnar Carlsson and Afra Zomorodian.
\newblock The theory of multidimensional persistence.
\newblock {\em Discrete Comput. Geom.}, 42(1):71--93, 2009.

\bibitem{chazal2016structure}
Fr\'{e}d\'{e}ric Chazal, Vin de~Silva, Marc Glisse, and Steve Oudot.
\newblock {\em The structure and stability of persistence modules}.
\newblock SpringerBriefs in Mathematics. Springer, [Cham], 2016.

\bibitem{cohen2007stability}
David Cohen-Steiner, Herbert Edelsbrunner, and John Harer.
\newblock Stability of persistence diagrams.
\newblock {\em Discrete Comput. Geom.}, 37(1):103--120, 2007.

\bibitem{crawley2015decomposition}
William Crawley-Boevey.
\newblock Decomposition of pointwise finite-dimensional persistence modules.
\newblock {\em J. Algebra Appl.}, 14(5):1550066, 8, 2015.

\bibitem{de2004topological}
Vin De~Silva and Gunnar Carlsson.
\newblock Topological estimation using witness complexes.
\newblock In {\em Proceedings of the First Eurographics Conference on
  Point-Based Graphics}, SPBG'04, pages 157--166, Aire-la-Ville, Switzerland,
  Switzerland, 2004. Eurographics Association.

\bibitem{edelsbrunner2008persistent}
Herbert Edelsbrunner and John Harer.
\newblock Persistent homology---a survey.
\newblock In {\em Surveys on discrete and computational geometry}, volume 453
  of {\em Contemp. Math.}, pages 257--282. Amer. Math. Soc., Providence, RI,
  2008.

\bibitem{edelsbrunner2010computational}
Herbert Edelsbrunner and John~L. Harer.
\newblock {\em Computational topology}.
\newblock American Mathematical Society, Providence, RI, 2010.
\newblock An introduction.

\bibitem{fasy2018challenges}
Brittany~Terese Fasy, Samuel Micka, David~L Millman, Anna Schenfisch, and Lucia
  Williams.
\newblock Challenges in reconstructing shapes from {E}uler characteristic
  curves.
\newblock Preprint, available at
  \href{https://arxiv.org/abs/1811.11337}{arXiv:1811.11337}, 2018.

\bibitem{GimperleinGoffeng}
Heiko Gimperlein and Marcus Goffeng.
\newblock On the magnitude function of domains in euclidean space.
\newblock To appear in Amer.~J.~Math., 2017.

\bibitem{Gomi}
Kiyonori Gomi.
\newblock Magnitude homology of geodesic space.
\newblock Preprint, available at
  \href{https://arxiv.org/abs/1902.07044}{arXiv:1902.07044}, 2019.

\bibitem{govc2016definition}
Dejan Govc.
\newblock On the definition of the homological critical value.
\newblock {\em J. Homotopy Relat. Struct.}, 11(1):143--151, 2016.

\bibitem{Gu}
Yuzhou Gu.
\newblock Graph magnitude homology via algebraic morse theory.
\newblock Preprint, available at
  \href{https://arxiv.org/abs/1809.07240v1}{arXiv:1809.07240v1}, 2018.

\bibitem{Hatcher}
Allen Hatcher.
\newblock {\em Algebraic topology}.
\newblock Cambridge University Press, Cambridge, 2002.

\bibitem{Hausmann}
Jean-Claude Hausmann.
\newblock On the {V}ietoris-{R}ips complexes and a cohomology theory for metric
  spaces.
\newblock In {\em Prospects in topology ({P}rinceton, {NJ}, 1994)}, volume 138
  of {\em Ann. of Math. Stud.}, pages 175--188. Princeton Univ. Press,
  Princeton, NJ, 1995.

\bibitem{heiss2017streaming}
Teresa Heiss and Hubert Wagner.
\newblock Streaming algorithm for {E}uler characteristic curves of
  multidimensional images.
\newblock In {\em Computer analysis of images and patterns. {P}art {I}}, volume
  10424 of {\em Lecture Notes in Comput. Sci.}, pages 397--409. Springer, Cham,
  2017.

\bibitem{richard}
Richard Hepworth and Simon Willerton.
\newblock Categorifying the magnitude of a graph.
\newblock {\em Homology Homotopy Appl.}, 19(2):31--60, 2017.

\bibitem{KanetaYoshinaga}
Ryuki Kaneta and Masahiko Yoshinaga.
\newblock Magnitude homology of metric spaces and order complexes.
\newblock Preprint, available at
  \href{https://arxiv.org/abs/1803.04247}{arXiv:1803.04247}, 2018.

\bibitem{LeinsterEulerCharCategory}
Tom Leinster.
\newblock The {E}uler characteristic of a category.
\newblock {\em Doc. Math.}, 13:21--49, 2008.

\bibitem{LeinsterMetricSpace}
Tom Leinster.
\newblock The magnitude of metric spaces.
\newblock {\em Doc. Math.}, 18:857--905, 2013.

\bibitem{survey}
Tom Leinster and Mark~W. Meckes.
\newblock The magnitude of a metric space: from category theory to geometric
  measure theory.
\newblock In {\em Measure theory in non-smooth spaces}, Partial Differ. Equ.
  Meas. Theory, pages 156--193. De Gruyter Open, Warsaw, 2017.

\bibitem{shulman}
Tom Leinster and Michael Shulman.
\newblock Magnitude homology of enriched categories and metric spaces.
\newblock Preprint, available at
  \href{https://arxiv.org/abs/1711.00802}{arXiv:1711.00802}, 2017.

\bibitem{asymptotic}
Tom Leinster and Simon Willerton.
\newblock On the asymptotic magnitude of subsets of {E}uclidean space.
\newblock {\em Geom. Dedicata}, 164:287--310, 2013.

\bibitem{lesnick2015theory}
Michael Lesnick.
\newblock The theory of the interleaving distance on multidimensional
  persistence modules.
\newblock {\em Found. Comput. Math.}, 15(3):613--650, 2015.

\bibitem{MeckesPositiveDefinite}
Mark~W. Meckes.
\newblock Positive definite metric spaces.
\newblock {\em Positivity}, 17(3):733--757, 2013.

\bibitem{MeckesMagnitudeDimensions}
Mark~W. Meckes.
\newblock Magnitude, {D}iversity, {C}apacities, and {D}imensions of {M}etric
  {S}paces.
\newblock {\em Potential Anal.}, 42(2):549--572, 2015.

\bibitem{milnor2016morse}
J.~Milnor.
\newblock {\em Morse theory}.
\newblock Based on lecture notes by M. Spivak and R. Wells. Annals of
  Mathematics Studies, No. 51. Princeton University Press, Princeton, N.J.,
  1963.

\bibitem{Otter}
Nina Otter.
\newblock Magnitude meets persistence. {H}omology theories for filtered
  simplicial sets.
\newblock Preprint, available at
  \href{https://arxiv.org/abs/1807.01540v1}{arXiv:1807.01540v1}, 2018.

\bibitem{PSS}
Leonid Polterovich, Egor Shelukhin, and Vuka\v{s}in Stojisavljevi\'{c}.
\newblock Persistence modules with operators in {M}orse and {F}loer theory.
\newblock {\em Mosc. Math. J.}, 17(4):757--786, 2017.

\bibitem{turner2014persistent}
Katharine Turner, Sayan Mukherjee, and Doug~M. Boyer.
\newblock Persistent homology transform for modeling shapes and surfaces.
\newblock {\em Inf. Inference}, 3(4):310--344, 2014.

\bibitem{Virk}
{\v Z}iga Virk.
\newblock Rips complexes as nerves and a functorial {D}owker-nerve diagram.
\newblock Preprint, available at
  \href{https://arxiv.org/abs/1906.04028}{arXiv:1906.04028}, 2019.

\bibitem{vongmasacarlsson}
Pawin Vongmasa and Gunnar Carlsson.
\newblock Exterior critical series of persistence modules.
\newblock Preprint, available at
  \href{https://arxiv.org/abs/1305.4780}{arXiv:1305.4780}, 2014.

\end{thebibliography}

\end{document}